\def\ps@pprintTitle{\let\@oddhead\@empty
  \let\@evenhead\@empty
  \def\@oddfoot{\reset@font\hfil\thepage\hfil}
  \let\@evenfoot\@oddfoot
}
\definecolor{red}{HTML}{f43545}
\definecolor{orange}{HTML}{ff8c00}
\definecolor{yellow}{HTML}{f6be00}
\definecolor{green}{HTML}{00ba71}
\definecolor{cyan}{HTML}{00b3b3}
\definecolor{indigo}{HTML}{a10b70}
\definecolor{violet}{HTML}{8a2be2}
\newtheorem{theorem}{Theorem}[section]
\newtheorem{lemma}[theorem]{Lemma}
\newtheorem{corollary}[theorem]{Corollary}
\newtheorem{prop}[theorem]{Proposition}
\newtheorem{remark}[theorem]{Remark}
\numberwithin{equation}{section}
\DeclareMathOperator{\Span}{span}
\DeclareMathOperator{\Tr}{trace}
\DeclareMathOperator{\conv}{conv}
\DeclareMathOperator{\sgn}{sgn}
\DeclareMathOperator{\Argmax}{arg\,max}
\renewcommand{\Im}{\operatorname{Im}}
\renewcommand{\Re}{\operatorname{Re}}
\begin{document} 

\begin{frontmatter}
    \title{Exact discretization, tight frames and recovery via $D$-optimal designs}
    
    \author[1]{Felix~Bartel}
    \ead{felix.bartel@math.tu-chemnitz.de}
    
    \author[1]{Lutz~Kämmerer}
    \ead{kaemmerer@math.tu-chemnitz.de}

    \author[1,2]{Kateryna~Pozharska}
    \ead{pozharska.k@gmail.com}
    
    \author[1]{Martin~Schäfer}
    \ead{martin.schaefer@math.tu-chemnitz.de}
    
    \author[1]{Tino~Ullrich\corref{cor1}}
    \ead{tino.ullrich@math.tu-chemnitz.de}
    
    \affiliation[1]{Faculty of Mathematics of Chemnitz University of Technology}
    \affiliation[2]{Institute of Mathematics of NAS of Ukraine}
    \cortext[cor1]{Corresponding author}
    
    \begin{abstract} $D$-optimal designs originate in statistics literature as an approach for  optimal experimental designs. In numerical analysis points and weights resulting from maximal determinants turned out to be useful for quadrature and interpolation. Also recently, two of the present authors and coauthors investigated a connection to the discretization problem for the uniform norm. Here we use this approach of maximizing the determinant of a certain Gramian matrix with respect to points and weights for the construction of tight frames and exact Marcinkiewicz-Zygmund inequalities in $L_2$. We present a direct and constructive approach resulting in a discrete measure  with at most $N \leq n^2+1$ atoms, which discretely and accurately subsamples the $L_2$-norm of complex-valued functions contained in a  given $n$-dimensional subspace. This approach can as well be used for the reconstruction of functions from general RKHS in $L_2$ where one only has access to the most important eigenfunctions. We verifiably and deterministically construct points and weights for a weighted least squares recovery procedure and pay in the rate of convergence compared to earlier optimal, however probabilistic approaches. The general results apply to the $d$-sphere or multivariate trigonometric polynomials on $\mathbb{T}^d$ spectrally supported on arbitrary finite index sets~$I \subset \mathds{Z}^d$. They can be discretized using at most $|I|^2-|I|+1$ points and weights. Numerical experiments indicate the sharpness of this result. As a negative result we prove that, in general, it is not possible to control the number of points in a reconstructing lattice rule only in the cardinality $|I|$ without additional condition on the structure of $I$. We support our findings with numerical experiments.
    \end{abstract} 

    \begin{keyword}
        Integral norm discretization, exact quadrature, sampling recovery, $D$-optimal designs 
        \MSC[2010]{
            41A25, 65D32, 94A20  }
    \end{keyword}
\end{frontmatter}

\section{Introduction} 

In this paper we investigate $D$-optimal designs, which are the result of optimizing the determinant of rescaled Gramian matrices.
This approach is known from optimal experimental designs in statistics literature \cite{DeSt97} or in point constructions for quadrature and interpolation, like Fekete points where one maximizes the determinant of the Vandermonde matrix \cite{KW60,BoPiVi20,Bo22,Bo23}.
We take a frame-theoretical viewpoint and obtain novel results for quadrature and frame analysis.
These two perspectives are connected in the following way:
Let $\bm\varphi(x) = (\varphi_1(x), \dots, \varphi_n(x))^\top\in\mathds C^n$ for $\varphi_1, \dots, \varphi_n \colon\Omega\to\mathds C$ be linearly independent continuous functions on a compact domain $\Omega$.
Then
\begin{itemize}
\item
    $V_n = \Span\{\varphi_1, \dots, \varphi_n\} \subset C(\Omega)$ represents an $n$-dimensional  space of complex-valued functions on a domain $\Omega$ and
\item
    $(\bm\varphi(x))_{x\in\Omega} \subset \mathds C^n$ is a continuous ${\mathds C}^n$-frame indexed by $x\in\Omega$.
\end{itemize}
A $D$-optimal design is a discretization 
with respect to a  probability measure $\varrho^{\ast} = \sum_{i=1}^{M_\varrho}\varrho_i\delta_{x_i}$ which maximizes a certain Gramian determinant, i.e.,
\begin{equation}\label{eq:D}
    \varrho^\ast = \Argmax_{\varrho} \det \Big( \sum_{i=1}^{M_\varrho} \varrho_i \bm\varphi(x_i)\cdot \bm\varphi(x_i)^\ast \Big) \,.
\end{equation}
We utilize this technique in two different variations and obtain twofold results.

\subsection{Parseval subframes and exact discretization} 

For a given (continuous) probability measure $\mu$ on $\Omega$ one may ask for an exact discretization thereof.
In particular, we seek points $x_1, \dots, x_{N}\in\Omega$ and weights $\varrho_1, \dots, \varrho_{N}\ge 0$ (summing up to $1$) forming an exact $L_2$-Marcinkiewicz-Zygmund (MZ) inequality \cite{Marcinkiewicz_Zygmund_1937} for $V_n$ , i.e.,
\begin{equation}\label{exactmz_formula_intro}
    \int_\Omega |f(x)|^2 \;\mathrm d\mu(x) = \sum_{i=1}^{N} \varrho_i |f(x_i)|^2
    \quad\text{for all}\quad
    f\in V_n \,.
\end{equation}
We present an approach using specific $D$-optimal designs to obtain \eqref{exactmz_formula_intro} with at most 
\begin{equation}\label{eq:M}
    N
    = \dim(\Span\{\varphi_k\overline{\varphi_l}\}_{k,l=1}^{n})+1
    \le n^2+1
\end{equation}
points and weights. In addition we give numerical evidence on the sharpness of this result.
More generally, in \Cref{discr_orth}, we present four elementary equivalent conditions under which a system of continuous linearly independent functions $\varphi_1,\dots,\varphi_n$ defined on a compact topological space turns out to be discretely orthonormal with respect to a suitably chosen measure.
We do this by combining an averaging argument with compactness to show that the optimizer of a certain $D$-optimal design procedure \eqref{eq:D} (involving $ \varphi_1, \dots, \varphi_n$) exists and has an objective value of one.
In a second step we use a version of Caratheodory's theorem in order to give the bound \eqref{eq:M} on the required number $N$ of points, see \Cref{CarSub}, which allows for subsampling convex (conic) combinations of (complex) Hermitian matrices.
Caratheodory subsampling has been considered before by several authors, see \cite{BoPiVi20,KPUU24,PiSoVi17}.
The latter reference also comments on its implementation. We further include $L_p$-MZ inequalities for even $p>2$ which supports the findings in \cite{FREEMAN2023126846}. We also contribute to Open Problem 10 in \cite{DaPrTeTi19}, i.e., we give an algorithmic method for constructing an exact $L_p$-MZ inequality with positive weights, cf.\ \Cref{even_MZ}.

Earlier works in this direction rely on the classical 1957 Tschakaloff theorem \cite{Tschak57}, a corner stone for the theory of exact quadrature formulas for spaces of polynomials, see also Shapiro \cite[Thm.\ 3.1.1]{Sha71}, Novak \cite{Nov86}, and Putinar \cite{Pu97} for more general versions. From the results above we even obtain an extended version of the classical Tschakaloff theorem for complex-valued functions, see \Cref{quadrature_formula}. 

In \Cref{Sect:examples}, we apply the general results in two specific contexts. First, we state a general result for the $d$-sphere $\mathds{S}^d$ in \Cref{exactmz_sphere} for even exponents $p\geq 2$. As a consequence we obtain in \Cref{quadrature_sphere}, that there exist points $\bm x^1, \dots, \bm x^N\in\mathds S^d$ forming an exact (weighted) integration rule for the polynomials $\Pi_m$ up to degree $m$ on the $d$-sphere, i.e.,
\begin{equation*}
    \int_{\mathds{S}^d} f(\bm x) \;\mathrm d\mu(\bm x) = \sum_{i=1}^{N} w_i f(\bm x^i) 
    \quad\text{for all}\quad
    f\in \Pi_{m}
\end{equation*}
with
\begin{equation*}
    N \le \dim\Pi_m \le \Big(\frac{9m}{d}\Big)^d \,.
\end{equation*}
Quadrature formulas of the above type with equal weights are called $t$-designs (or $m$-design according to our notation, where $m$ is the degree of the polynomial) introduced in \cite{Delsarte_Goethals_Seidel1977}.

The equal weight condition makes this problem much harder and only a limited number of constructions of spherical designs are known.
However, there are also approaches to obtain spherical designs being exact up to machine precision numerically, cf.\ \cite{Womersley18, GP11}.
In general the existence of spherical designs is known with the optimal asymptotic rate $N\ge C_d m^d$, cf.\ \cite[Thm.\ 1]{BRV13}.
This matches the number of points in our weighted result.

In \Cref{exactmz_torus} we state that \eqref{exactmz_formula_intro} holds for trigonometric polynomials with frequencies in any arbitrary finite index set $I \subset \mathds{Z}^d$.
For
\begin{equation*}
    D(I) \coloneqq \{\bm k - \bm\ell : \bm k, \bm \ell \in I\}
\end{equation*}
the difference set, we find $N \leq |D(I)| \leq |I|^2 - |I|+1$ points and positive weights such that the exact MZ inequality \eqref{exactmz_formula_intro} holds for all $f \in V(I) := \operatorname{span} \{\exp(2\pi i\langle\bm k, \cdot\rangle):\bm k \in I\}$. In general, these points $(x_i)_{i = 1}^N$ should have a very irregular structure. In fact, we show in \Cref{cor:lattice_aliasing} that at least for lattice rules it is not possible to control the number of points only in the cardinality of the index set $|I|$ (without additional conditions on $I$ as in \cite{Kammerer_Potts_Volkmer_2015}). This contradicts \cite[Thm.\ 4.1]{Te17} and contributes to Open Problem 2 in \cite{DaPrTeTi19}.

Also here the question arises whether all of this can be done with equal weights $w_i = 1/N$.
In case of trigonometric polynomials such a discretization is always possible with a large enough  $N$
depending on the ``largest'' frequency in $I$. However, it is still not known, whether the equal-weighted \eqref{exactmz_formula_intro} can be arranged for any index set $I$ with a number of points  $N$
only depending on the cardinality of $|I|$.
We leave this as an open problem.

As our approach is algorithmic, we propose three algorithms in Section~\ref{Sect:discrete_orth}. \Cref{algo0} and \Cref{algo1} are two different ways to determine $N\le n^2+1$ points $(x_i)_i$ and weights $(\varrho_i)_i$ such that \eqref{exactmz_formula_intro} holds true. \Cref{algo2} transforms the basis of $V_n$ to
$\{\psi_1,\dots,\psi_n\}$ and determines $(x_i)_i$ such that the corresponding system of vectors 
$\bm{\psi}(x_i) = (\psi_1(x_i),\ldots,\psi_n(x_i))^\top \in \mathds{C}^n$ constitutes an equal norm tight frame in $\mathds{C}^n$ with respect to some discrete probability measure $\varrho = \sum_{i=1}^{N} \varrho_i\delta_{x_i}$. Several numerical experiments with \Cref{algo1} are conducted in \Cref{sec:numerics} with outcomes aligning with our theoretical findings:
\begin{itemize}
\item
    In Experiment~1 we indicate the numerical applicability of our approach as we recover known lattice point constructions.
    Furthermore, we compute exact MZ points in settings,
    where we know that all lattice rules of that size fail as discussed above.
\item
    In Experiment~2 we investigate a dimensional dependence and observe that for a fixed number of random frequencies fewer points suffice with increasing dimension.
    This suggests a certain ``blessing of dimension'' in this particular setting.
    A similar effect was considered in \cite{GP11} for the sphere $\mathds S^d$.
\item
    Experiment~3 indicates the necessity of the oversampling $N = |D(I)|$ in general as we do not find exact MZ inequalities with fewer points for this particular choice of frequencies in $I$. We do not have a rigorous proof. Theoretical lower bounds indicating at least the quadratic scaling in the number of samples are given in \cite[Thm.\ 3.3, 3.4]{DaPrTeTi19}.   
\end{itemize}

In terms of the language of finite frames \eqref{exactmz_formula_intro} can be rephrased as finding a weighted Parseval subframe $(\sqrt{\varrho_i}\bm\varphi(x_i)))_{i=1}^{N}$ such that
\begin{equation*}
    \|\bm a\|_2^2
    = \int_{\Omega} |\langle \bm a,\bm \varphi(x)\rangle|^2 \;\mathrm d\mu(x)
    = \sum\limits_{i=1}^{N} \varrho_i |\langle \bm a, \bm\varphi(x_i)\rangle|^2
    \quad\text{for all}\quad
    \bm a \in \mathds{C}^n\,.
\end{equation*}
Note, that the problem of the discretization of continuous frames is deeply studied in literature. Here we refer to \cite{FS19}, \cite{FREEMAN2023126846} and the references therein.

\subsection{Equal norm subframes} 

In the well-known paper by Kiefer and Wolfowitz \cite{KW60} the authors prove the equivalence of two different optimization problems. The $D$-optimal design for an $n$-dimensional subspace of real-valued functions  $V_n \subset C(\Omega)$ on the one hand and the min-max problem for the corresponding Christoffel function $\eta_n(x) = \sum_{k=1}^{n}|\varphi_k(x)|^2$ on the other hand. This is the problem of finding a Borel probability measure $\mu$ on $\Omega$ which minimizes $\sup_{f\in V_n \setminus \{0\} } \| f\|^2_{\infty} / \|f\|^2_{L_2(\mu)}$\,.
The latter is sometimes called $G$-optimal design, see Bos~\cite{Bo23}.
The equivalence in \cite{KW60} is useful in a different context.
In the recent paper \cite{KPUU24} two of the authors with coauthors extended the technique in \cite{KW60} to complex-valued functions (where the continuity is even lacking) and the optimal measure turns out to be discrete (Tschakaloff's theorem \cite{Tschak57}, \cite{Pu97} is not required).
Based on the proof in \cite{KPUU24} we prove in \Cref{frame_statement} the existence of a discrete probability measure $\lambda = \sum_{i=1}^{N} \lambda_i \delta_{x_i} $ with $N\leq n^2+1$
and a related linear transformation $\bm{A}_\lambda$ with $\bm{\psi} = \bm{A}_\lambda^{-1/2}\cdot \bm{\varphi}$ such that for a given arbitrary finite (continuous) frame $(\bm{\varphi}(x))_x \subset \mathds{C}^n$ it holds
\begin{equation*}
    \|\bm a\|_2^2 = \sum\limits_{i=1}^{N} \lambda_i |\langle \bm a, \bm\psi(x_i)\rangle|^2 \quad,\quad \bm a \in \mathds{C}^n\,,
\end{equation*}
with the additional feature that $\|\bm{\psi}(x)\|_2 \leq \sqrt{n}$ for all $x\in \Omega$ and $\|\bm{\psi}(x)\|_2 = \sqrt{n}$ on $\operatorname{supp} \lambda$.
This relates to equal norm tight frames (ENTF), which play an important role in the frame community, see \Cref{rem:ENTF}. The corresponding algorithm is formulated in \Cref{algo2}.

\subsection{Guaranteed and verifiable recovery of functions}
We consider a recovery problem, which has been first addressed by Wasilkowski and Wo{\'z}niakowski \cite{WaWo01} in 2001 and drew a lot of attention in the past 6 years \cite{KrUL21,NaSchUl22,DoKrUl23,BSU23}. We aim for the problem to practically find stable recovery algorithms based on suitable sample points and weights which recover verifiably any function from the unit ball of a reproducing kernel Hilbert space (RKHS) $H(k)$ with a prescribed accuracy in $L_2(\mu)$. Here we consider a further restriction which is certainly motivated from practical computation. We only have access to the kernel function and the $n$ most important (largest) singular values and corresponding eigenfunctions of the corresponding integral operator. The main result reads as follows. We construct a linear sampling recovery operator $S^{k,\mu}_{n,N}$ depending on the kernel $k(\cdot, \cdot):\Omega\times \Omega \to \mathds C$ and the target measure $\mu$ which uses $N \leq n^2+1$ many sample points such that   
    \begin{equation}
        \sup\limits_{\|f\|_{H(k)\leq 1}}\|f-S^{k,\mu}_{n,N} f\|_{L_2(\mu)}^2 \leq 3\sum\limits_{j\geq n+1} \sigma_j^2\leq 3\,c_n(H(k),C(\Omega))^2\,,
    \end{equation}
where $c_n(H(k),C(\Omega))$ denotes the $n$-Gelfand number of the embedding $I:H(k) \to C(\Omega)$
\begin{equation*}
    c_n(H(k),C(\Omega))
    = \inf_{\substack{\theta\colon\mathds C^n\to C(\Omega)\\L\in\mathcal L(H(k), \mathds C^n)}}
    \sup_{\|f\|_{H(k)}\le 1}
    \|f-\theta\circ L(f)\|_{C(\Omega)} \,.
\end{equation*}
Our attempt will be to construct an exact discretization for the space of the most important eigenfunctions. In special cases like periodic functions this is done using, e.g., rank-$1$ lattices for the given frequency sets in $\mathds Z^d$, like hyperbolic crosses, in \cite{Kammerer_Potts_Volkmer_2015}. As this specific approach does not work in general, we rely on optimal $D$-designs working in the most general context.

\paragraph{Notation} 

As usual, $\mathds N$, $\mathds Z$,  $\mathds R$, $\mathds C$  denote the natural (without zero), integer, real, and complex numbers.
If not indicated otherwise $\log(\cdot)$ denotes the natural logarithm.
$\mathds C^n$ shall denote the complex $n$-space and $\mathds C^{m\times n}$ the set of complex $m\times n$-matrices.
Vectors and matrices are usually typesetted boldface.
We use  ${\bm y}^\ast:=\overline{\bm y}^\top$.
In general, the adjoint of a matrix $\bm L\in\mathds C^{m\times n}$ is denoted by $\bm L^{\ast}$.
For the spectral norm we use $\|\bm L\|_{2\to 2}$, whereas the Frobenius norm is denoted with $\|\bm L\|_F$. For a compact topological space $\Omega$ we use $C(\Omega)$ for the space of complex-valued continuous functions on $\Omega$. \section{Auxiliary tools}\label{Sect:aux_tools}

\begin{prop}\label{meandeterminant} Let $\psi_1,\dots,\psi_m\colon \Omega\to\mathds C$ be orthonormal with respect to a measure $\mu$ on $\Omega$. Let further $\bm \psi (x) := (\psi_1(x),\dots,\psi_m(x))^\top \in \mathds C^m$.
    Then for fixed $M\in \mathds N$ with $M\geq m$ we have
    \begin{align*}
        &\int_\Omega \dots \int_\Omega
        \det\Big(
        \frac{1}{M}\sum_{i=1}^{M}\bm \psi(x_i)\cdot\bm \psi(x_i)^\ast
        \Big)
        \;\mathrm d\mu(x_1)\dots\;\mathrm d\mu(x_M) \\
        &\quad= \frac{M}{M} \frac{M-1}{M} \cdots \frac{M-m+1}{M} \,.
    \end{align*}
\end{prop}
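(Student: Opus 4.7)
The plan is to recognize the matrix inside the determinant as a rescaled Gramian of the sample matrix and then apply the Cauchy--Binet formula followed by the Leibniz expansion. Concretely, let $\bm\Psi \in \mathds C^{m\times M}$ be the matrix whose $i$-th column equals $\bm\psi(x_i)$, so that
\begin{equation*}
    \frac{1}{M}\sum_{i=1}^{M}\bm\psi(x_i)\cdot\bm\psi(x_i)^{\ast}
    = \frac{1}{M}\,\bm\Psi\bm\Psi^{\ast} \,.
\end{equation*}
Since $\bm\Psi\bm\Psi^{\ast}\in\mathds C^{m\times m}$ with $M\ge m$, Cauchy--Binet yields
\begin{equation*}
    \det\bigl(\bm\Psi\bm\Psi^{\ast}\bigr)
    = \sum_{S\subset\{1,\dots,M\},\;|S|=m}|\det\bm\Psi_{S}|^{2} \,,
\end{equation*}
where $\bm\Psi_{S}$ denotes the $m\times m$ submatrix formed by the columns indexed by $S$. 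Factoring out the $1/M^{m}$ gives $\det((1/M)\bm\Psi\bm\Psi^{\ast})=M^{-m}\sum_{S}|\det\bm\Psi_{S}|^{2}$.

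Next I would integrate termwise and reduce to a single summand. For a fixed $S$, the determinant $\det\bm\Psi_{S}$ depends only on the variables $x_{i}$, $i\in S$, so the integrals over all remaining variables produce factors of $\int_\Omega 1\,\mathrm d\mu=1$ (here I use that $\mu$ is tacitly a probability measure in the setting of the paper; the normalization of the right-hand side in the proposition forces this). It therefore suffices to compute
\begin{equation*}
    \int_\Omega\!\cdots\!\int_\Omega |\det\bm\Psi_{S}|^{2}\prod_{i\in S}\mathrm d\mu(x_{i})
\end{equation*}
and this value is independent of $S$, so multiplying by the number $\binom{M}{m}$ of subsets closes the count.

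For the remaining integral I would, after relabeling, assume $S=\{1,\dots,m\}$ and expand by Leibniz:
\begin{equation*}
    \det\bm\Psi_{S} = \sum_{\sigma\in\mathfrak S_{m}}\operatorname{sgn}(\sigma)\prod_{j=1}^{m}\psi_{\sigma(j)}(x_{j}) \,.
\end{equation*}
Taking $|\cdot|^{2}$ produces a double sum over pairs $(\sigma,\tau)$, and integration factors as a product $\prod_{j=1}^{m}\int_\Omega\psi_{\sigma(j)}\overline{\psi_{\tau(j)}}\,\mathrm d\mu = \prod_{j=1}^{m}\delta_{\sigma(j),\tau(j)}$ by orthonormality. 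The only surviving contributions are $\sigma=\tau$, giving $m!$ in total. Combining with the prefactor and the binomial count yields
\begin{equation*}
    \frac{1}{M^{m}}\binom{M}{m}m! \;=\; \frac{M(M-1)\cdots(M-m+1)}{M^{m}} \,,
\end{equation*}
which is the claimed expression. There is no real obstacle here; the only point requiring care is the clean bookkeeping of which variables enter each $\det\bm\Psi_{S}$ so that the integrals over the other $M-m$ variables contribute trivially, and the implicit assumption that $\mu(\Omega)=1$.
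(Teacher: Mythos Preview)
Your argument is correct. The route, however, differs from the paper's. The paper expands the determinant of the sum $\frac{1}{M}\sum_{i}\bm\psi(x_i)\bm\psi(x_i)^{\ast}$ directly via Leibniz, obtaining a sum over all index tuples $(i_1,\dots,i_m)\in\{1,\dots,M\}^m$; it then shows by a pairing-of-permutations argument that every tuple with a repeated index contributes zero, and only afterwards integrates and invokes orthonormality. Your Cauchy--Binet step replaces that cancellation argument entirely: the identity $\det(\bm\Psi\bm\Psi^{\ast})=\sum_{|S|=m}|\det\bm\Psi_S|^2$ already restricts attention to $m$-element \emph{subsets} of columns, so repeated indices never enter. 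This makes your proof shorter and more structural, at the cost of quoting Cauchy--Binet; the paper's version is fully elementary but has to do the combinatorics by hand. Your observation that $\mu(\Omega)=1$ is tacitly needed applies equally to the paper's proof, where the integrals over the $M-m$ unused variables are silently dropped.
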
 

\begin{proof} By Leibniz' formula for the determinant we have
    \begin{align}
        \det\Big(
        \frac{1}{M}\sum_{i=1}^{M}\bm \psi(x_i)\cdot\bm \psi(x_i)^\ast
        \Big)
        &= \sum_{\sigma\in S_m} \sgn(\sigma) \prod_{k=1}^{m} \frac{1}{M} \sum_{i=1}^{M} \psi_k(x_i)\overline{\psi_{\sigma(k)}(x_i)} \nonumber\\
        &= M^{-m} \sum_{i_1,\dots,i_m = 1}^{M} \sum_{\sigma\in S_m} \sgn(\sigma)
        \cdot \psi_1(x_{i_1})\overline{\psi_{\sigma(1)}(x_{i_1})} \cdots \psi_m(x_{i_m})\overline{\psi_{\sigma(m)}(x_{i_m})}
        \,. \label{eq:asdljaf}
    \end{align}

    We now show that the inner sum evaluates to zero whenever some $i_l=i_k$ for some $1\le l \neq k \le m$.
    Without loss of generality assume $i_1 = i_2 = j$.
    We split $S_m$ into the permutations with positive and negative sign by using $\sgn(\sigma\circ(1\;2)) = -\sgn(\sigma)$, i.e.,
    \begin{align*}
        S_m
        &= \{\sigma\in S_m : \sgn(\sigma) = 1\} \;\dot\cup\; \{\sigma\in S_m : \sgn(\sigma) = -1\} \\
        &= \{\sigma\in S_m : \sgn(\sigma) = 1\} \;\dot\cup\; \{\sigma\circ(1\;2)\in S_m : \sgn(\sigma) = 1\} \,.
    \end{align*}
    For the inner sum of \eqref{eq:asdljaf} it follows in case $i_1=i_2 = j$
    \begin{align*}
        &\sum_{\sigma\in S_m} \sgn(\sigma)
        \cdot
        \psi_1(x_j)\overline{\psi_{\sigma(1)}(x_j)}
        \cdot
        \psi_2(x_j)\overline{\psi_{\sigma(2)}(x_j)}
        \cdots \psi_m(x_{i_m})\overline{\psi_{\sigma(m)}(x_{i_m})} \\
        &\quad= \sum_{\substack{\sigma\in S_m\\\sgn(\sigma)=1}}
        \psi_1(x_j)\overline{\psi_{\sigma(1)}(x_j)}
        \cdot
        \psi_2(x_j)\overline{\psi_{\sigma(2)}(x_j)}
        \cdots \psi_m(x_{i_m})\overline{\psi_{\sigma(m)}(x_{i_m})} \\
        &\qquad-
        \psi_1(x_j)\overline{\psi_{\sigma(2)}(x_j)}
        \cdot
        \psi_2(x_j)\overline{\psi_{\sigma(1)}(x_j)}
        \cdots \psi_m(x_{i_m})\overline{\psi_{\sigma(m)}(x_{i_m})} \\
        &\quad= 0
        \,.
    \end{align*}
    Thus, in \eqref{eq:asdljaf} we only need to sum over pairwise different indices $i_l \neq i_k$ for $1\le l,k \le m$.

    It remains to apply the integrals. Using Fubini's theorem gives
    \begin{align*}
        &\int_\Omega \dots \int_\Omega
        \det\Big(
        \frac{1}{M}\sum_{i=1}^{M}\bm \psi(x_i)\cdot\bm \psi(x_i)^\ast
        \Big)
        \;\mathrm d\mu(x_1)\dots\;\mathrm d\mu(x_M) \\
        &= M^{-m}
        \sum_{\substack{i_1, \dots, i_m = 1\\i_l\neq i_k}}^{M} \sum_{\sigma\in S_m} \sgn(\sigma)
        \cdot
        \int_\Omega \psi_1(x_{i_1})\overline{\psi_{\sigma(1)}(x_{i_1})} \;\mathrm d\mu(x_{i_1})
        \cdots
        \int_\Omega \psi_m(x_{i_m})\overline{\psi_{\sigma(m)}(x_{i_m})} \;\mathrm d\mu(x_{i_m}) \\
        &= M^{-m}
        \sum_{\substack{i_1, \dots, i_m = 1\\i_l\neq i_k}}^{M} \sum_{\sigma\in S_m} \sgn(\sigma)
        \cdot
        \delta_{1,\sigma(1)}
        \cdots
        \delta_{m,\sigma(m)} \\
        &= M^{-m}
        \sum_{\substack{i_1, \dots, i_m = 1\\i_l\neq i_k}}^{M}
        1
        \,.
    \end{align*}
    The number of elements in $\{i_1,\dots,i_m=1,\dots,M : i_l\neq i_k\}$ evaluates to $M(M-1)\cdots(M-m+1)$, which shows the assertion.
\end{proof} 

\begin{remark} The previous result is a special case of a general result relating the expectation of the determinant of a random matrix and the determinant of the expectation of a matrix, see \cite[Lemma~2.3]{DWD22}. As things simplify in our case we give an elementary proof for the convenience of the reader.
\end{remark} 

\begin{corollary}\label{closetoone} Let $\psi_1,\dots,\psi_m\colon \Omega\to\mathds C$ be orthonormal with respect to a probability measure $\mu$ and $\bm \psi(x) = (\psi_1(x),\dots,\psi_m(x))^\top$.
    Then for all $\varepsilon>0$ there exists an $M\in \mathds N$ and $x_1,\dots,x_M\in \Omega$ such that
    \begin{align*}
        \det\Big(
        \frac{1}{M}\sum_{i=1}^{M}\bm \psi(x_i)\cdot\bm \psi(x_i)^\ast
        \Big)
        \ge 1-\varepsilon \,.
    \end{align*}
\end{corollary}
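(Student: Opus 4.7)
The plan is to deduce this directly from \Cref{meandeterminant} via a standard averaging argument. By that proposition, the integral of the determinant against the product measure $\mu^{\otimes M}$ equals
\[
    \prod_{k=0}^{m-1}\Big(1-\frac{k}{M}\Big) \,,
\]
which tends to $1$ as $M\to\infty$. In particular, given $\varepsilon>0$, I would first fix $M$ large enough so that $\prod_{k=0}^{m-1}(1-k/M)\ge 1-\varepsilon$, which is possible because the product is a finite product of terms each converging to $1$.

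Next, I would invoke the trivial observation that if the mean of a real-valued function with respect to a probability measure is at least $1-\varepsilon$, then the function must attain a value of at least $1-\varepsilon$ somewhere on its domain. Since $\mu^{\otimes M}$ is a probability measure on $\Omega^M$ and the integrand
\[
    (x_1,\dots,x_M) \mapsto \det\Big(\frac{1}{M}\sum_{i=1}^M \bm\psi(x_i)\cdot\bm\psi(x_i)^\ast\Big)
\]
is a measurable function with average $\ge 1-\varepsilon$, there must exist at least one tuple $(x_1,\dots,x_M)\in\Omega^M$ for which the determinant is $\ge 1-\varepsilon$. This gives the desired points.

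There is no real obstacle here, as the work has already been done in \Cref{meandeterminant}; the only care needed is in the measurability of the integrand (which is clear since it is a polynomial in the values $\psi_k(x_i),\overline{\psi_k(x_i)}$) and in choosing $M$ large enough, for which an explicit bound like $M\ge m^2/\varepsilon$ would suffice by estimating $\prod_{k=0}^{m-1}(1-k/M)\ge 1 - \sum_{k=0}^{m-1} k/M \ge 1 - m^2/(2M)$.
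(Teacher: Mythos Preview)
Your proposal is correct and follows essentially the same approach as the paper's own proof: invoke \Cref{meandeterminant} to compute the mean of the determinant, observe that it tends to $1$ as $M\to\infty$, and use the fact that a real-valued function must somewhere attain at least its average with respect to a probability measure. Your additional remarks on measurability and the explicit choice $M\ge m^2/\varepsilon$ are fine but not needed for the argument.
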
 

\begin{proof} In \Cref{meandeterminant} we computed the mean of the determinant for $m$ orthonormal functions and $M$ points to be $M(M-1)\cdots(M-m+1)/M^m$. Since $\mu$ is assumed to be a probability measure here, there must therefore exist $M$ points $x_1,\dots,x_M\in\Omega$ such that this value is actually attained or exceeded.
    Consequently, as this value tends to $1$ for a growing number of points $M$, the assertion follows.
\end{proof} 

\begin{prop}\label{dettrace} Let $\psi_1,\dots,\psi_m\colon \Omega\to\mathds{C}$ be functions. As above we put
    $\bm \psi(x) := (\psi_1(x),\dots,\psi_m(x))^\top \in \mathds{C}^m$.
    Further, let $M\in \mathds N$, $\alpha_1, \dots, \alpha_M \ge 0$ with $\alpha_1+\dots+\alpha_M = 1$, and $x_1,\dots,x_M\in \Omega$.
    Then
    \begin{equation*}
        \sqrt[m]{\det\Big(
        \sum_{i=1}^{M}\alpha_i \bm \psi(x_i)\cdot\bm \psi(x_i)^\ast
        \Big)}
        \le \frac{1}{m}
        \Tr \Big(\sum_{i=1}^{M}\alpha_i \bm \psi(x_i)\cdot\bm \psi(x_i)^\ast\Big)
        \leq \frac{1}{m}\sup\limits_{x \in \Omega}\sum\limits_{k=1}^m |\psi_k(x)|^2\,.
    \end{equation*}
\end{prop}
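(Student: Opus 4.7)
The plan is to prove the two inequalities separately, both being straightforward consequences of standard facts.

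First I would observe that the matrix $\bm A := \sum_{i=1}^{M}\alpha_i \bm \psi(x_i)\cdot\bm \psi(x_i)^\ast$ is Hermitian and positive semidefinite: each rank-one summand $\bm\psi(x_i)\bm\psi(x_i)^\ast$ is Hermitian PSD, and the coefficients $\alpha_i \ge 0$ preserve this property. Hence $\bm A$ has nonnegative real eigenvalues $\lambda_1,\dots,\lambda_m \ge 0$, and $\det(\bm A) = \lambda_1\cdots\lambda_m$, $\Tr(\bm A)=\lambda_1+\dots+\lambda_m$.

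For the left inequality, I would apply the classical AM-GM inequality to the eigenvalues:
\begin{equation*}
    \sqrt[m]{\det(\bm A)} = \sqrt[m]{\lambda_1 \cdots \lambda_m} \le \frac{\lambda_1+\dots+\lambda_m}{m} = \frac{\Tr(\bm A)}{m}.
\end{equation*}
This gives the first bound directly. (In the degenerate case where some $\lambda_j = 0$, the left side vanishes and the inequality holds trivially.)

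For the right inequality I would use linearity of the trace and the cyclic property $\Tr(\bm\psi(x_i)\bm\psi(x_i)^\ast) = \bm\psi(x_i)^\ast\bm\psi(x_i) = \|\bm\psi(x_i)\|_2^2 = \sum_{k=1}^{m}|\psi_k(x_i)|^2$. Therefore
\begin{equation*}
    \Tr(\bm A) = \sum_{i=1}^{M} \alpha_i \sum_{k=1}^{m}|\psi_k(x_i)|^2 \le \Big(\sum_{i=1}^{M}\alpha_i\Big)\sup_{x\in\Omega}\sum_{k=1}^{m}|\psi_k(x)|^2 = \sup_{x\in\Omega}\sum_{k=1}^{m}|\psi_k(x)|^2,
\end{equation*}
using $\alpha_1+\dots+\alpha_M = 1$. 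Dividing by $m$ completes the proof. There is no real obstacle here; the only subtlety worth mentioning is that AM-GM applies to the eigenvalues of a PSD Hermitian matrix (so one needs to note $\bm A$ is PSD and Hermitian), and the passage from trace of $\bm\psi(x)\bm\psi(x)^\ast$ to $\|\bm\psi(x)\|_2^2$ is immediate from the cyclic property of the trace.
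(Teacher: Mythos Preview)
Your proof is correct and follows essentially the same approach as the paper: both establish the first inequality via AM--GM on the eigenvalues of the Hermitian positive semidefinite matrix $\bm A$, and the second by computing $\Tr(\bm A)=\sum_i \alpha_i \sum_k |\psi_k(x_i)|^2$ and bounding with $\sum_i \alpha_i = 1$. The only cosmetic difference is that you invoke the cyclic property of the trace where the paper reads off the diagonal entries directly.
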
 

\begin{proof} The first inequality is a consequence of the inequality between arithmetic and geometric mean.
    For a positive semi-definite Hermitian matrix $\bm A\in\mathds C^{m\times m}$ with eigenvalues $\lambda_1(\bm A), \dots, \lambda_m(\bm A)\ge 0$, we have
    \begin{equation*}
        \sqrt[m]{\det(\bm A)}
        = \Big(\prod_{k=1}^{m}\lambda_k(\bm A)\Big)^{1/m}
        \le \frac{1}{m} \sum_{k=1}^{m} \lambda_k(\bm A)
        = \frac{\Tr(\bm A)}{m} \,.
    \end{equation*}
    For the second inequality, we compute the trace of the given matrix via its diagonal entries
    \begin{align*}
        \Tr \Big(\sum_{i=1}^{M}\alpha_i \bm \psi(x_i)\cdot\bm \psi(x_i)^\ast\Big)
        &= \sum_{k=1}^{m}
        \sum_{i=1}^{M}\alpha_i [\bm \psi(x_i)\cdot\bm \psi(x_i)^\ast]_{k,k} \\
        &= \sum_{i=1}^{M}\alpha_i
        \sum_{k=1}^{m} |\psi_k(x_i)|^2 \\
        &\le \Big( \sum_{i=1}^{M}\alpha_i \Big)
        \sup_{x\in \Omega} \sum_{k=1}^{m} |\psi_k(x)|^2 \,.
    \end{align*}
      Taking $\alpha_1 + \dots + \alpha_M =1$ into account, the proof is complete.
\end{proof} 

\begin{lemma}[A version of Lemma 10 from \cite{KPUU24}]\label{Lemma_10_KPUU24}
    Let $\psi_1,\dots,\psi_m\colon \Omega\to\mathds C$ be functions and denote $\bm \psi(x) = (\psi_1(x),\dots,\psi_m(x))^\top$. Then it holds
    \begin{equation*}
        \dim_{\mathds{C}}(\Span_{\mathds{C}}\{ \psi_1,\dots,\psi_m\}) = \dim_{\mathds{C}}(\Span_{\mathds{C}}\{\bm \psi(x) \colon x\in \Omega \})\,,
    \end{equation*}
    where we consider a linear space
    of functions
    on the left-hand side. On the right-hand side a subspace of $\mathds{C}^m$ is considered.
\end{lemma}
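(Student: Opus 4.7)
The plan is to prove the equality by applying the rank--nullity theorem to the natural surjection
\[
    \Phi\colon \mathds{C}^m \to \Span_{\mathds{C}}\{\psi_1,\dots,\psi_m\},\qquad \Phi(\bm a) = \sum_{k=1}^{m} a_k \psi_k,
\]
so that the left-hand side of the claimed identity equals $m - \dim_{\mathds{C}}(\ker \Phi)$, and all the work is reduced to identifying $\ker \Phi$.

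Next I would rewrite membership in $\ker \Phi$ in terms of the vectors $\bm\psi(x)$. The function $\Phi(\bm a)$ vanishes identically on $\Omega$ precisely when $\bm a^{\!\top}\bm\psi(x) = \sum_{k=1}^{m} a_k \psi_k(x) = 0$ for every $x\in\Omega$. By linearity this is equivalent to $\bm a^{\!\top}\bm w = 0$ for every $\bm w$ in $W := \Span_{\mathds{C}}\{\bm\psi(x) : x\in\Omega\}$, i.e.\ $\bm a$ lies in the annihilator $W^{\perp_{\!\top}}$ of $W$ under the symmetric bilinear pairing $(\bm u,\bm v)\mapsto \bm u^{\!\top}\bm v$ on $\mathds{C}^m$. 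Since this pairing is non-degenerate, one has the standard dimension identity $\dim_{\mathds{C}} W + \dim_{\mathds{C}} W^{\perp_{\!\top}} = m$. Plugging this into rank--nullity yields
\[
    \dim_{\mathds{C}}\Span_{\mathds{C}}\{\psi_1,\dots,\psi_m\}
    \;=\; m - \dim_{\mathds{C}}(\ker \Phi)
    \;=\; m - (m - \dim_{\mathds{C}} W)
    \;=\; \dim_{\mathds{C}} W,
\]
which is exactly the asserted equality.

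The only subtle point, and thus the main thing to watch out for, is to use the \emph{unconjugated} bilinear pairing $\bm u^{\!\top}\bm v$ rather than the Hermitian inner product $\bm u^{\ast}\bm v$. The kernel condition $\sum_k a_k\psi_k(x)=0$ is linear and unconjugated in the coefficients $a_k$, so the annihilator computation must be done with the symmetric form; over $\mathds{C}$ this form is still non-degenerate, so the dimension count $\dim W + \dim W^{\perp_{\!\top}} = m$ goes through unchanged and no continuity or measure-theoretic input is needed.
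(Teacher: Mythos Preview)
Your argument is correct. The surjection $\Phi\colon\mathds{C}^m\to\Span_{\mathds C}\{\psi_1,\dots,\psi_m\}$ together with rank--nullity reduces the claim to computing $\dim_{\mathds C}\ker\Phi$, and your identification $\ker\Phi=W^{\perp_{\!\top}}$ via the non-degenerate symmetric bilinear form on $\mathds{C}^m$ is clean and valid; the remark about avoiding the Hermitian pairing is well taken, though one could equally observe that $\bm a\in\ker\Phi$ iff $\overline{\bm a}$ lies in the Hermitian orthogonal complement of $W$, and complex conjugation preserves real dimension.

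As for comparison: the paper does not supply its own proof of this lemma but simply records it as a version of a result from~\cite{KPUU24}. Your self-contained linear-algebra argument is therefore a welcome addition rather than a deviation from anything in the text.
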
 

\begin{lemma}\label{lemma_dimensions} Let $\psi_1,\dots,\psi_m\colon \Omega\to\mathds C$ be functions and denote $\bm \psi(x) = (\psi_1(x),\dots,\psi_m(x))^\top$. For the set
    \begin{equation}\label{eq:setH}
        \mathcal{M}:=\{
\bm \psi(x) \cdot \bm \psi(x)^{*}    \colon \ x\in \Omega\}\,
    \end{equation}
    of complex-valued Hermitian matrices in $\mathds C^{m \times m}$ it holds
    \begin{equation*}
        \dim_{\mathds{C}}(\Span_{\mathds{C}}\{\psi_k\overline{\psi_l}\}_{k,l=1}^{m})
   =     \dim_{\mathds{C}}(\Span_{\mathds{C}} \mathcal{M})
        = \dim_{\mathds{R}} (\Span_{\mathds{R}}\mathcal{M})
        \le m^2 \,.
    \end{equation*}
\end{lemma}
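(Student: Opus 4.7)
The plan is to establish the two equalities and the bound separately. The central ingredients are \Cref{Lemma_10_KPUU24}, the elementary fact that the vectorization map $\mathrm{vec}\colon \mathds{C}^{m\times m}\to\mathds{C}^{m^2}$ is a linear isomorphism, and the Hermitian / anti-Hermitian decomposition of $\mathds{C}^{m\times m}$.

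For the first equality, I would note that the $(k,l)$-entry of the rank-one matrix $\bm\psi(x)\bm\psi(x)^*$ equals $\psi_k(x)\overline{\psi_l(x)}$. Hence the stacked vector $\mathrm{vec}\bigl(\bm\psi(x)\bm\psi(x)^*\bigr)\in\mathds{C}^{m^2}$ is exactly the value at $x$ of the $m^2$-tuple of functions $(\psi_k\overline{\psi_l})_{k,l=1}^m$. Since $\mathrm{vec}$ is a $\mathds{C}$-linear isomorphism between $\mathds{C}^{m\times m}$ and $\mathds{C}^{m^2}$, it preserves span dimensions, so
\[
\dim_{\mathds{C}}\Span_{\mathds{C}}\mathcal{M}
= \dim_{\mathds{C}}\Span_{\mathds{C}}\{\mathrm{vec}(\bm\psi(x)\bm\psi(x)^*):x\in\Omega\}.
\]
Applying \Cref{Lemma_10_KPUU24} to the $m^2$-tuple of functions $(\psi_k\overline{\psi_l})_{k,l}$ rewrites the right-hand side as $\dim_{\mathds{C}}\Span_{\mathds{C}}\{\psi_k\overline{\psi_l}\}_{k,l=1}^m$, which gives the first equality.

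For the second equality, set $V:=\Span_{\mathds{R}}\mathcal{M}$. Every element of $\Span_{\mathds{C}}\mathcal{M}$ can be split into its real and imaginary scalar parts, so $\Span_{\mathds{C}}\mathcal{M}=V+iV$ when both sides are viewed as real subspaces of $\mathds{C}^{m\times m}$. The key observation is that $V$ is a real subspace of the Hermitian matrices, while $iV$ consists of anti-Hermitian matrices; hence $V\cap iV=\{0\}$, since the only matrix that is simultaneously Hermitian and anti-Hermitian is the zero matrix. Thus $\Span_{\mathds{C}}\mathcal{M}=V\oplus iV$ as real vector spaces, with real dimension $2\dim_{\mathds{R}} V$. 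On the other hand, a complex subspace always has real dimension twice its complex dimension, giving $2\dim_{\mathds{C}}\Span_{\mathds{C}}\mathcal{M}=2\dim_{\mathds{R}} V$, and the middle equality follows.

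Finally, the upper bound $\dim_{\mathds{C}}\Span_{\mathds{C}}\mathcal{M}\le m^2$ is immediate because $\Span_{\mathds{C}}\mathcal{M}\subset\mathds{C}^{m\times m}$. I do not anticipate any real obstacle: the only mildly subtle point is the Hermitian / anti-Hermitian dichotomy that forces the direct-sum splitting $V\oplus iV$, and this is standard once noted.
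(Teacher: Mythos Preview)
Your proposal is correct and follows essentially the same route as the paper: vectorize the rank-one matrices to apply \Cref{Lemma_10_KPUU24} for the first equality, then use the Hermitian/skew-Hermitian direct-sum decomposition $\Span_{\mathds{C}}\mathcal{M}=\Span_{\mathds{R}}\mathcal{M}\oplus_{\mathds{R}} i\,\Span_{\mathds{R}}\mathcal{M}$ to identify the real and complex span dimensions. The paper's exposition and yours differ only in phrasing.
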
 

\begin{proof} We consider the set \eqref{eq:setH}
of matrices with the entries  $\psi_k(x)\overline{\psi_l(x)}$, $k,l=1,\dots, m$.
    These matrices can also be interpreted as vectors $\tilde{\bm \psi}(x) :=(f_1(x), \dots, f_{m^2}(x))^\top \in \mathds{C}^{m^2}$ with component functions $f_j:\Omega\to\mathds{C}$, $j=1,\dots, m^2$. Further, $\tilde{\mathcal{M}} := \{
    \tilde{\bm \psi}(x) \colon \ x\in \Omega\} $ can be identified with $\mathcal{M}$.
    With \Cref{Lemma_10_KPUU24} we then obtain
    \begin{equation*}
        \dim_{\mathds{C}}(\Span_{\mathds{C}}\{\psi_k\overline{\psi_l}\}_{k,l=1}^{m}) =
        \dim_{\mathds{C}}(\Span_{\mathds{C}}\{f_j\}_{j=1}^{m^2}) = \dim_{\mathds{C}}(\Span_{\mathds{C}}\tilde{\mathcal{M}})
        =\dim_{\mathds{C}}(\Span_{\mathds{C}}\mathcal{M}) \,,
    \end{equation*}
    where $\dim_{\mathds{C}}(\Span_{\mathds{C}}\{f_j\}_{j=1}^{m^2}) \le m^2$ is obvious. Further,
    the space $\Span_{\mathds{C}}\mathcal{M}$ can be decomposed as a direct sum of $\Span_{\mathds{R}}\mathcal{M}$ and $\mathrm{i}\cdot \Span_{\mathds{R}}\mathcal{M}$, i.e.,
    \begin{equation*}
        \Span_{\mathds{C}}\mathcal{M} =  \Span_{\mathds{R}}\mathcal{M}  \oplus_{\mathds{R}} \mathrm{i}\cdot \Span_{\mathds{R}}\mathcal{M} \,.
    \end{equation*}
    For this, note that
    $\Span_{\mathds{R}}\mathcal{M}$ is an $\mathds{R}$-linear space of Hermitian matrices,
    whereas $\mathrm{i}\cdot \Span_{\mathds{R}}\mathcal{M}$ is an $\mathds{R}$-linear space consisting of skew-Hermitian matrices. Therefore $\Span_{\mathds{R}}\mathcal{M}\cap (\mathrm{i}\cdot \Span_{\mathds{R}}\mathcal{M}) =\{ 0 \}$, and for the dimensions we deduce $\dim_{\mathds{R}}(\Span_{\mathds{R}} \mathcal{M}) = \dim_{\mathds{R}} (\mathrm{i}\cdot \Span_{\mathds{R}}\mathcal{M})$ and
    \begin{equation*}
        \dim_{\mathds{C}}(\Span_{\mathds{C}} \mathcal{M}) = \frac{1}{2} \dim_{\mathds{R}}(\Span_{\mathds{C}}\mathcal{M})
        = \frac{1}{2} \dim_{\mathds{R}} (\Span_{\mathds{R}}\mathcal{M}\oplus_{\mathds{R}} \mathrm{i}\cdot \Span_{\mathds{R}}\mathcal{M}) =  \dim_{\mathds{R}} (\Span_{\mathds{R}}\mathcal{M}) \,.
    \end{equation*}
    The proof is finished.
\end{proof} 

\begin{remark} Note that in general for complex-valued functions  $\psi_1,\dots,\psi_m\colon \Omega\to\mathds C$, in contrast to the set $\mathcal{M}$ of Hermitian matrices,  we have
    \begin{equation*}
        \dim_{\mathds{C}}(\Span_{\mathds{C}}\{\psi_k\overline{\psi_l}\}_{k,l=1}^{m}) \neq  \dim_{\mathds{R}}(\Span_{\mathds{R}}\{\psi_k\overline{\psi_l}\}_{k,l=1}^{m}) .
    \end{equation*}
    The poof of \Cref{lemma_dimensions} heavily depends on the structure of sets in the underlying spans. A simple example is the system $\psi_1 = 1$, $\psi_2 = \mathrm i$, for which one gets
    \begin{equation*}
        \dim_{\mathds{C}}(\Span_{\mathds{C}}\{\psi_k\overline{\psi_l}\}_{k,l=1}^{2}) = 1 \neq 2 =
        \dim_{\mathds{R}}(\Span_{\mathds{R}}\{\psi_k\overline{\psi_l}\}_{k,l=1}^{2}) .
    \end{equation*}
\end{remark} 

\begin{prop}[Caratheodory subsampling for Hermitian matrices]\label{CarSub} Let $\psi_1,\dots,\psi_m\colon \Omega\to\mathds C$ be  functions and denote $\bm \psi(x) = (\psi_1(x),\dots,\psi_m(x))^\top$.
    Further, let $M\in \mathds N$, $\alpha_1, \dots, \alpha_M \ge 0$ and $x_1,\dots,x_M\in \Omega$ such that
    \begin{equation*}
        \bm A = \sum_{i=1}^{M}\alpha_i \bm \psi(x_i)\cdot\bm \psi(x_i)^\ast \,.
    \end{equation*}
    Let further $N = \dim(\Span\{\psi_k\overline{\psi_l}\}_{k,l=1}^{m}) \le m^2$. Then the following holds.

    \begin{enumerate}[(i)]
    \item
        There exists a subset $\{y_1,\dots,y_ {N} \} \subset \{x_1,\dots,x_M\} \subset \Omega$ of the initial nodes and weights $\beta_1,\dots,\beta_N\ge 0$ such that $\bm A$ can be represented as
        \begin{equation*}
            \bm A = \sum_{i=1}^N\beta_i \bm \psi(y_i)\cdot\bm \psi(y_i)^\ast \,.
        \end{equation*}
    
    \item
        If additionally $\alpha_1+\dots+\alpha_M = 1$ then we have a subset
        $\{z_1,\dots,z_{N+1}\} \subset \{x_1,\dots,x_M\} \subset \Omega$ and weights $\gamma_1,\dots,\gamma_{N+1}\geq 0$ satisfying $\gamma_1+\dots+\gamma_{N+1}= 1$ with
        \begin{equation*}
            \bm A = \sum_{i=1}^{N+1}\gamma_i \bm \psi(z_i)\cdot\bm \psi(z_i)^\ast \,.
        \end{equation*}
    \end{enumerate}
\end{prop}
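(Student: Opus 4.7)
The plan is to recognize this as two parallel applications of Caratheodory's theorem, once in its conic version for (i) and once in its classical affine/convex version for (ii), where the ambient real vector space is identified using the preceding \Cref{lemma_dimensions}.

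First I would observe that every summand $\bm\psi(x_i)\cdot\bm\psi(x_i)^\ast$ belongs to the set $\mathcal M$ defined in \eqref{eq:setH}, and therefore to the real linear span $\Span_{\mathds R}\mathcal M$. By \Cref{lemma_dimensions}, this is a real vector space of dimension exactly $N = \dim(\Span\{\psi_k\overline{\psi_l}\}_{k,l=1}^m) \le m^2$. So the entire problem of rewriting $\bm A$ as a shorter combination of rank-one Hermitian outer products $\bm\psi(x_i)\cdot\bm\psi(x_i)^\ast$ takes place inside a real vector space of dimension $N$.

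For part (i), the matrix $\bm A$ lies in the conic hull of $\{\bm\psi(x_i)\cdot\bm\psi(x_i)^\ast\}_{i=1}^M$ inside this $N$-dimensional real space. The conic version of Caratheodory's theorem then guarantees a subset of size at most $N$ and nonnegative weights $\beta_1,\dots,\beta_N$ realizing the same sum. Concretely, as long as more than $N$ of the $\bm\psi(x_i)\cdot\bm\psi(x_i)^\ast$ carry strictly positive weight, they admit a nontrivial real linear dependence; adding a suitable multiple of this dependence to the weight vector $(\alpha_i)$ preserves the Hermitian matrix represented, keeps all weights nonnegative, and zeroes out at least one further weight. Iterating this reduction finishes the argument. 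For part (ii), the additional constraint $\alpha_1+\dots+\alpha_M = 1$ forces $\bm A$ into the convex hull of the same $M$ matrices. Applied now to the classical Caratheodory theorem in the $N$-dimensional real vector space $\Span_{\mathds R}\mathcal M$, we obtain a subset of size at most $N+1$ together with convex weights $\gamma_1,\dots,\gamma_{N+1}\ge 0$ summing to one. Equivalently, one can derive (ii) directly from (i) by applying (i) to the $(N+1)$-tuple $(1,\bm\psi(x_i)\cdot\bm\psi(x_i)^\ast)$ in $\mathds R\oplus\Span_{\mathds R}\mathcal M$, which lifts the weight-sum constraint into a conic setting.

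The only delicate step is the identification of the correct ambient real dimension, since the outer products are complex Hermitian matrices and one must avoid overcounting real versus complex dimensions; this is precisely what \Cref{lemma_dimensions} accomplishes. Everything else is a routine invocation of Caratheodory's theorem, and since we explicitly reduce by eliminating one node at a time through linear-dependence cancellations, the resulting $\{y_i\}$ and $\{z_i\}$ are genuine subsets of the original point set $\{x_1,\dots,x_M\}$, as claimed.
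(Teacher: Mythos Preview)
Your proposal is correct and follows essentially the same approach as the paper: identify the ambient real dimension as $N$ via \Cref{lemma_dimensions}, then apply Caratheodory's theorem in its conic form for (i) and its convex form for (ii). The paper's proof is terser and does not spell out the iterative elimination or the lifting trick you mention for deriving (ii) from (i), but the substance is identical.
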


\begin{proof} The convex hull of the set \eqref{eq:setH} of matrices $\mathcal M$ is defined via
    \begin{equation*}
       \conv\mathcal M := \bigg\{\sum_{i=1}^M \alpha_i
\bm \psi(x_i) \cdot \bm \psi(x_i)^{*}
       \colon\, M\in \mathds{N},\, \alpha_i \ge 0,\,\sum_{i=1}^M \alpha_i = 1,\, x_i\in \Omega\bigg\}
    \end{equation*}
    and clearly lies in $\Span_{\mathds{R}}\mathcal{M}$. Consequently, it can be considered a convex subset of $\mathds{R}^N$ (see \Cref{lemma_dimensions}).
    By Caratheodory's theorem, see \cite[Thm.\ 1.1.4]{Schn14},
    we have that each element of a convex hull, which is a subset of $\mathds{R}^N$, can be represented as a convex combination
    of $N+1$ elements. By the above identification this reduction transfers to $\conv\mathcal M$ and proves (ii).

    Considering instead of $\conv\mathcal M$ the conic hull and, respectively, dropping the condition that the coefficients $\alpha_i$ sum up to one, a similar reduction can be shown with a straight forward modification of the proof. Without the convexity assumption we may even reduce to $N$ summands (instead of $N+1$). This gives (i).

\end{proof} %
 \section{Discrete orthogonality and tight frames via $D$-optimal designs}\label{Sect:discrete_orth}

We are now heading for conditions under which a given system of linearly independent, bounded and continuous functions
$\varphi_1,\dots,\varphi_n\colon \Omega\to\mathds{C}$ turns out to be (discretely) orthonormal with respect to some probability measure $\mu$ on $\Omega$. We will prove a characterization first, which is interesting on its own.

In case the characteristic function $\mathds 1_\Omega$ belongs to $V_n:=\operatorname{span}_{\mathds{C}}\{\varphi_1(\cdot),\dots,\varphi_n(\cdot)\}$ we may define
\begin{equation*}
    \eta_n(x):= \frac{\sum_{k=1}^n |\varphi_k(x)|^2}{n},
\end{equation*}
which is a continuous function on $\Omega$ bounded away from zero (otherwise the $\mathds 1_\Omega$-function would have zeros). In case the $\mathds 1_\Omega$-function is not contained in $V_n$ we blow up the space to
\begin{equation*}
    V_{n+1}
    := \operatorname{span}_{\mathds C}\{\varphi_1(\cdot),\cdots,\varphi_n(\cdot), \varphi_{n+1} := \mathds 1_\Omega\}\,.
\end{equation*}
In this case we obtain a basis $\{\varphi_1(\cdot),\cdots,\varphi_n(\cdot),\mathds 1_\Omega\}$ and the corresponding Christoffel-type function
\begin{equation*}
    \eta_{n+1}(x)
    \coloneqq \frac{1+\sum_{k=1}^n |\varphi_k(x)|^2}{n+1},
\end{equation*}
which is again bounded away from zero and continuous at any $x\in \Omega$. 

Next, consider a discrete (atomic) measure on $\Omega$
\begin{equation}\label{eqdef:discrete_measure}
\varrho := \sum_{i=1}^{M_{\varrho}} \varrho_i \delta_{x_i} \,,
\end{equation}
where the non-negative weights $(\varrho_i)_i$ sum up to one and the points $(x_i)_i \subset \Omega$ are fixed.
Further, define
\begin{equation*}
    \langle f,g\rangle_{\varrho}
    \coloneqq \sum\limits_{i=1}^{M_\varrho} \varrho_i f(x_i)\overline{g(x_i)}
\end{equation*}
and the $m \times m$ Gramian
\begin{equation}\label{Arho}
    \bm{A}_\varrho
    := \Big(\Big\langle \frac{\varphi_k}{\sqrt{\eta_m}},\frac{\varphi_l}{\sqrt{\eta_m}} \Big\rangle_{\varrho}\Big)_{k,l = 1}^{m}\,,
\end{equation}
where either $m:=n$ or $m:=n+1$ depending on the cases above.
The matrix $\bm{A}_{\varrho}$ can then also be represented in the form $\bm{A}_{\varrho} = \sum_{i=1}^{M_\varrho} \varrho_i \bm{\psi}(x_i)\cdot \bm{\psi}(x_i)^*$ with $\bm{\psi}(x):= (\psi_1(x), \dots, \psi_m(x))^\top$
and
\begin{equation}\label{rescaling_of_varphi}
    \psi_k(x) := \varphi_k(x)/\sqrt{\eta_m(x)} \quad,\quad k=1,\dots,m \,.
\end{equation}
This transition from the functions $\varphi_k$ to the functions $\psi_k$ performs the ``change of measure''.

Let us finally consider the corresponding set $\mathcal{M}$ from \eqref{eq:setH} in the proof of \Cref{CarSub}. This is clearly a compact set of rank-1 matrices
$\bm{\psi}(x) \cdot \bm{\psi}(x)^{*} \in \mathds C^{m \times m}$ if $\Omega$ is assumed to be compact due to the continuity
of the mapping $x \mapsto \bm \psi(x)\cdot {\bm \psi}(x)^*$. Besides, each element of $\mathcal{M}$ is  Hermitian and positive semi-definite with real determinant, i.e.,
$\det (\bm{\psi}(x) \cdot \bm{\psi}(x)^{*}) \geq 0$ for all $x\in \Omega$.
Furthermore, by \Cref{CarSub} the convex hull remains closed and bounded and hence compact.

The determinant is a continuous mapping on $\mathds{C}^{m\times m}$, which implies the existence of a maximizer $\bm A_{\alpha} = \sum_{i=1}^{M_{\alpha}} \alpha_i \bm \psi(x_i)\cdot\bm \psi(x_i)^* \in \conv\mathcal{M}$ which satisfies (for weights $\varrho$ as in~\eqref{eqdef:discrete_measure} and $\bm{A}_\varrho$ as in~\eqref{Arho})
\begin{equation}\label{Aalpha}
  \det(\bm A_{\alpha}) = \max\limits_{\varrho} \det(\bm{A}_{\varrho}) = \sup\limits_{\bm M \in \conv \mathcal{M}} \det(\bm M)\,.
\end{equation}
From \Cref{dettrace} we obtain that $\det(\bm{A}_{\alpha}) \leq 1$ since $\sum_{k=1}^m |\psi_k(x)|^2 = m$\,.

The subsequent proposition characterizes the property that a set of given continuous functions turns out to be orthonormal with respect to a suitably chosen Borel probability measure $\mu$. This result might be of independent interest. Note, that in case the constant function belongs to their span, i.e., $\sum_{i=1}^n \gamma_i\varphi_i = \mathds 1_\Omega$, the condition $\sum_{i=1}^n |\gamma_i|^2 = 1$ is required. However, this can be guaranteed by a simple rescaling of the basis functions.      

\begin{prop}\label{discr_orth} Let $\Omega$ be a compact topological space and $\varphi_1,\dots,\varphi_n\colon \Omega\to\mathds C$  linearly independent continuous functions on $\Omega$. Let further $N = \dim(\Span\{\varphi_k\overline{\varphi_l}\}_{k,l=1}^{n})$. The following assertions are equivalent.
    \begin{enumerate}[(i)]
    \item
        We have
        \begin{equation*}
            {\bm I}_n \in \conv \bigg\{
            (\varphi_k(x)\overline{\varphi_l(x)}
            )_{k,l=1}^{n}
            \colon\,x\in \Omega \bigg\}\,.
        \end{equation*}

    \item
        There exist $N+1$ points $x_1,\dots,x_{N+1}\in \Omega$  and non-negative weights with $\mu_1+\dots+\mu_{N+1} = 1$ such that
        \begin{enumerate}
        \item[\em (ii\,a)]
            the system $(\varphi_l)_{l=1}^{n}$ is orthonormal with respect to discrete measure $\varrho = \sum_{i=1}^{N+1} \mu_i \delta_{x_i}$\,, i.e.,
\begin{equation*}
                \sum\limits_{i=1}^{N+1} \mu_i\varphi_k(x_i)\overline{\varphi_l(x_i)}
                = \delta_{k,l}\,, \quad  k,l=1,\dots, n;
            \end{equation*}
\item[\em (ii\,b)]
            we have
            \begin{equation}\label{eq1000}
                \min\limits_{(x_i)_i,(\mu_i)_i} \Big\|\sum_{i=1}^{N+1} \mu_i\bm{\varphi}(x_i)\cdot \bm{\varphi}(x_i)^*-{\bm I}_n\Big\|_F = 0
            \end{equation}
            for the minimization problem with respect to the Frobenius norm $\|\cdot\|_F$\,.\\ Here $\bm{\varphi}(x) = (\varphi_1(x),\dots,\varphi_n(x))^T$\,.
        \end{enumerate}

    \item
        There exists a Borel probability measure $\mu$ on $\Omega$ such that
        \begin{equation*}
            \int_\Omega \varphi_k(x)\overline{\varphi_l(x)} \;\mathrm d\mu(x)
            = \delta_{k,l}\,, \quad k,l=1,\dots, n,
        \end{equation*}
        i.e., the system
$(\varphi_l)_{l=1}^{n}$
        is orthonormal with respect to the measure $\mu$.

    \item The representation $\mathds 1_\Omega=\sum_{i=1}^{m} \gamma_i\varphi_i$ holds true with $\sum_{i=1}^m |\gamma_i|^2 = 1$ (Recall that $\varphi_{n+1}:=\mathds 1_\Omega$ in case $m=n+1$.) and the maximizer $\bm{A}_\alpha$ in~\eqref{Aalpha} satisfies
    one of the following two (equivalent) conditions:
        \begin{enumerate}
        \item[\em (iv\,a)]
        \begin{equation*}\det(\bm{A}_{\alpha})
= 1 \,,
        \end{equation*}
        \item[\em (iv\,b)]
        \begin{equation*}
              \bm{A}_{\alpha} = \bm{I}_m\,.
        \end{equation*}
        \end{enumerate}    
    \end{enumerate}
\end{prop}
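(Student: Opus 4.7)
The plan is to prove equivalence via a cyclic chain (i) $\Rightarrow$ (ii) $\Rightarrow$ (iii) $\Rightarrow$ (i), to derive (iv\,a) $\Leftrightarrow$ (iv\,b) directly from \Cref{dettrace}, and to connect (iv) with the remaining conditions through the rescaling $\psi_k = \varphi_k/\sqrt{\eta_m}$ of~\eqref{rescaling_of_varphi}.

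For the chain: (i) $\Rightarrow$ (ii) is a direct application of Caratheodory subsampling for Hermitian matrices (\Cref{CarSub}~(ii)) to the convex representation of $\bm I_n$ asserted in (i). The dimensional bound $N = \dim\Span\{\varphi_k\overline{\varphi_l}\}_{k,l=1}^n$ yields at most $N+1$ atoms; reading the resulting identity entrywise gives (ii\,a), while (ii\,b) is just its Frobenius-norm reformulation. The implication (ii) $\Rightarrow$ (iii) is immediate upon setting $\mu := \sum_{i=1}^{N+1} \mu_i\delta_{x_i}$. For (iii) $\Rightarrow$ (i), orthonormality rewrites as $\int_\Omega \bm\varphi(x)\bm\varphi(x)^{*}\,\mathrm d\mu(x) = \bm I_n$; since $\Omega$ is compact and $\bm\varphi$ continuous, the convex hull of $\{\bm\varphi(x)\bm\varphi(x)^{*}: x\in\Omega\}$ is compact (as noted in the paragraph preceding the proposition), and the integral, being a weak limit of convex combinations of these rank-one matrices, lies in this convex hull.

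For (iv), the decisive observation is the pointwise identity $\sum_{k=1}^m |\psi_k(x)|^2 = m$ (immediate from the definition of $\eta_m$), which forces every $\bm M \in \conv\mathcal M$ to have trace $m$. \Cref{dettrace} then gives $\sqrt[m]{\det\bm M} \le 1$, and equality in the AM--GM step forces all $m$ eigenvalues to equal $1$, i.e.\ $\bm M = \bm I_m$, settling (iv\,a) $\Leftrightarrow$ (iv\,b). To bridge (iv) with (i)--(iii), I would use $\bm\psi(x)\bm\psi(x)^{*} = \bm\varphi(x)\bm\varphi(x)^{*}/\eta_m(x)$. Given $\bm A_\alpha = \bm I_m$ with atoms $(x_i,\alpha_i)$, the weights $\mu_i := \alpha_i/\eta_m(x_i)$ yield a conic combination of rank-ones in the $\bm\varphi$-system equal to $\bm I_m$; testing this identity against the vector $\bm\gamma = (\gamma_1,\ldots,\gamma_m)^{\top}$ from the expansion $\mathds 1_\Omega = \sum_k \gamma_k \varphi_k$ produces $\sum_i \mu_i = \bm\gamma^{*}\bm I_m \bm\gamma = \|\bm\gamma\|_2^2 = 1$, promoting the combination to a convex one and delivering (i). The reverse direction inverts the correspondence: $\alpha_i := \mu_i \eta_m(x_i)$ satisfies $\sum_i\alpha_i = \int_\Omega \eta_m\,\mathrm d\mu$, which under orthonormality evaluates to $1$, and the resulting matrix equals $\bm I_m$ and hence is the maximiser by (iv\,a)~$\Leftrightarrow$~(iv\,b).

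The hard part is the weight bookkeeping across the $\bm\varphi\leftrightarrow\bm\psi$ rescaling: division by $\eta_m$ destroys the constraint $\sum_i\alpha_i = 1$, and it is precisely the unit-norm hypothesis $\sum_i|\gamma_i|^2 = 1$ in (iv) that supplies the quadratic identity needed to re-certify convexity. A secondary concern is treating the two cases $m = n$ and $m = n+1$ uniformly; in the latter one works throughout with the augmented basis including $\mathds 1_\Omega$ and, where (i)--(iii) refer to the original $n$ functions, extracts the appropriate $n\times n$ principal submatrix at the end.
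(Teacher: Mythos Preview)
Your chain (i)$\Rightarrow$(ii)$\Rightarrow$(iii), your handling of (iv\,a)$\Leftrightarrow$(iv\,b), and your argument for (iv)$\Rightarrow$(i) all match the paper essentially verbatim. The first genuine difference is your direct proof of (iii)$\Rightarrow$(i): interpreting $\int_\Omega \bm\varphi(x)\bm\varphi(x)^{*}\,\mathrm d\mu(x)$ as a barycenter lying in the (compact, hence closed) convex hull is valid and more elementary than the paper's route, which closes the cycle only \emph{through} (iv) via the averaging lemma \Cref{closetoone}. So for the equivalence of (i)--(iii) alone your path is shorter; the price is that you now need a separate bridge \emph{into} (iv), whereas in the paper (iii)$\Rightarrow$(iv) is the substantive step that simultaneously feeds the rest.

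That bridge is where you have a real gap. You set $\alpha_i := \mu_i\,\eta_m(x_i)$ from the discrete data in (ii) and claim $\sum_i\alpha_i\,\bm\psi(x_i)\bm\psi(x_i)^{*}=\bm I_m$. For $m=n$ this works. For $m=n+1$, however, the $(k,n{+}1)$-entry (with $k\le n$) reads $\sum_i\mu_i\varphi_k(x_i)$, and nothing in (ii) forces this to vanish: condition (ii) asserts orthonormality among $\varphi_1,\ldots,\varphi_n$ only, not orthogonality of these to $\varphi_{n+1}=\mathds 1_\Omega$. Your proposed remedy of ``working throughout with the augmented basis'' would require the $(n{+}1)$-system itself to be orthonormal under some probability measure, which is strictly stronger than (iii) for the original $n$ functions and is not at your disposal. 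The paper does not attempt this direct construction; its (iii)$\Rightarrow$(iv) instead applies \Cref{closetoone} to the rescaled functions $\psi_k$ with respect to the measure $\eta_m\,\mathrm d\mu$, combined with \Cref{dettrace}, to squeeze $\det(\bm A_\alpha)$ between $1-\varepsilon$ and $1$. A minor additional omission on your side: you do not verify the first clause of (iv), namely $\sum_i|\gamma_i|^2=1$; the paper deduces it from (iii) by integrating $1=|\mathds 1_\Omega|^2=\big|\sum_k\gamma_k\varphi_k\big|^2$ against $\mu$.
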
 

\begin{proof} {\em Step~1.} Using \Cref{CarSub} yields (i) $\Longrightarrow$ (ii). The implications (ii\,a) $\Longleftrightarrow$ (ii\,b) $\Longrightarrow$ (iii) are immediate.

    {\em Step~2.} We prove (iii) $\Longrightarrow$ (iv). First, note that in case $m=n$ indeed
    \begin{equation*}
    1 = \int_\Omega \mathds 1_\Omega(x) \;\mathrm d\mu(x) = \sum_{k,l=1}^{m} \gamma_k \overline{\gamma_l} \int_\Omega \varphi_k \overline{\varphi_l(x)} \;\mathrm d\mu(x)
            =  \sum_{k,l=1}^{m} \gamma_k \overline{\gamma_l}\delta_{k,l} = \sum_{k=1}^{m} |\gamma_k|^2  \,.
    \end{equation*}
    In case $m=n+1$ we have $\sum_{i=1}^m |\gamma_i|^2 = 1$ due to $\gamma_{m}=1$ and $\gamma_{1}=\dots=\gamma_n=0$.
    Second, we already know that the maximizer $\bm{A}_\alpha$ in \eqref{Aalpha} exists and satisfies $\det(\bm{A}_{\alpha}) \leq 1$. From \Cref{closetoone} applied to the functions $\psi_k$, $k=1,\dots,m$, with the corresponding measure $\eta_m(\cdot)\mathrm d\mu(\cdot)$ we obtain that $\det(\bm A_\alpha) \geq 1-\varepsilon$ for all $\varepsilon>0$ and hence $\det(\bm A_{\alpha}) \geq 1$, which proves (iv\,a).
    Further, from \Cref{dettrace}, it follows that
    $\frac{1}{m} \sum_{k=1}^{m} \lambda_k(\bm A_\alpha) =1$ due to $\sum_{k=1}^m |\psi_k(x)|^2 = m$.
    Therefore, the geometric mean of the eigenvalues of $\bm{A}_\varrho$ equals the arithmetic mean (see \Cref{dettrace}) which is only possible for equal eigenvalues. This implies $\lambda_1 = \cdots = \lambda_{m}=1$ and hence $\bm{A}_\alpha = \bm{I}_{m}$ since $\bm{I}_{m}$ is the only Hermitian matrix that has all eigenvalues 1, i.e., the statement (iv\,b).
    
    {\em Step~3.} It remains to prove the implication (iv) $\Longrightarrow$ (i).
    First note that if (iv\,a) is fulfilled for a maximizer $\bm{A}_{\alpha}$ also (iv\,b) is fullfilled by the arguments in Step~2. Now we distinguish the cases $m=n+1$ and $m=n$.
    
    In case $m=n+1$  we obtain from $\varphi_{n+1} \equiv 1$,
    \begin{equation}\label{convexsum}
        1
        = \Big\langle \frac{\varphi_{m}}{\sqrt{\eta_m}},\frac{\varphi_{m}}{\sqrt{\eta_m}}\Big\rangle_{\alpha}
        = \sum\limits_{i=1}^{M_\alpha} \frac{\alpha_i}{\eta_m(x_i)}\,.
    \end{equation}
    In addition, we obtain for $1\leq k,l \leq n$
    \begin{equation}\label{orthogonality_relations}
        \delta_{k,l}
        = \Big\langle \frac{\varphi_k}{\sqrt{\eta_m}}, \frac{\varphi_l}{\sqrt{\eta_m}} \Big\rangle_{\alpha}
        = \sum\limits_{i=1}^{M_\alpha} \frac{\alpha_i}{\eta_m(x_i)}\varphi_k(x_i)\overline{\varphi_l(x_i)}\,.
    \end{equation}
    Together with \eqref{convexsum} this gives (i). The case $m=n$ is a bit more involved, since in this case we may not have $\varphi_k \equiv 1$ for any $k=1,\dots,n$.
    Clearly, \eqref{orthogonality_relations} is still satisfied.
    To obtain an analogon to~\eqref{convexsum}, we now exploit the condition $\mathds 1_\Omega=\sum_{i=1}^{n} \gamma_i\varphi_i$ with $\sum_{i=1}^n |\gamma_i|^2 =1$. Indeed, then
    \begin{equation*}
        1 = \sum_{k,l=1}^{n} \gamma_k \overline{\gamma_l} \delta_{k,l}
        = \sum_{k,l=1}^{n} \sum\limits_{i=1}^{M_\alpha} \gamma_k \overline{\gamma_l} \frac{\alpha_i}{\eta_n(x_i)}\varphi_k(x_i)\overline{\varphi_l(x_i)}
        = \sum\limits_{i=1}^{M_\alpha} \frac{\alpha_i}{\eta_n(x_i)}\,.
    \end{equation*}
    The proof is finished.
\end{proof} 

\noindent\begin{algorithm}[t] \caption{\texttt{Construction of a discrete Parseval frame}}\label{algo0}
    \begin{tabularx}{\textwidth}{lp{100pt}X}
        \textbf{Input:} &
        $\Omega$ &
        compact topological space and Borel probability measure $\mu$ \\
        &
        $\varphi_1, \dots, \varphi_n\colon \Omega\to\mathds C$ &
        mutually orthonormal and continuous functions on $\Omega$ w.r.t. $\mu$ \\[2pt]
        \hline\\[-8pt]
        \textbf{Output:} &
        $(x_i)_i$ and $(\mu_i)_i$ &
        satisfying \eqref{f100} and $\mu_i$ summing up to $1$, $i=1,\dots,N+1$
        \\[2pt]\hline
    \end{tabularx}
    \begin{algorithmic}[1]
        \STATE{ put $N \coloneqq \dim(\Span\{\varphi_k\overline\varphi_l\}_{k,l=1}^{n})$ }
        \IF{$\mathds 1_\Omega\in\Span_{\mathds C}\{\varphi_1, \dots, \varphi_n\}$}
        \STATE{ put $m\coloneqq n$ }
        \ELSE
        \STATE{ put $\varphi_{n+1} \coloneqq  \mathds 1_\Omega$ and $m\coloneqq n+1$ }
        \ENDIF
        \STATE{ put $\tilde{N} \coloneqq \dim(\Span\{\varphi_k\overline\varphi_l\}_{k,l=1}^{m})$ }
        \STATE{ put $\eta_m(x) \coloneqq  \frac{1}{m}\sum_{k=1}^{m}|\varphi_k(x)|^2$ and $\bm{\varphi}(x) \coloneqq (\varphi_1(x), \dots, \varphi_m(x))^\top$ }
        \STATE{ put $\mathcal M \coloneqq  \Big\{\frac{\bm{\varphi}(x)\cdot \bm{\varphi}(x)^\ast}{\eta_m(x)} : x\in \Omega\Big\}$ }
\STATE{
            find $x_1, \dots, x_{\tilde{N}+1}\in \Omega$ and $\alpha_1, \dots, \alpha_{\tilde{N}+1}\ge 0$ with $\alpha_1+\dots+\alpha_{\tilde{N}+1}=1$ such that
            \begin{equation*}
                \det\Big(\sum_{i=1}^{\tilde{N}+1}\alpha_i \frac{\bm{\varphi}(x_i)\cdot \bm{\varphi}(x_i)^\ast}{\eta_m(x_i)}\Big)
                = \max_{\bm M\in\conv\mathcal M} \det(\bm M)
                = 1
            \end{equation*}
            }
        \IF{$N=\tilde{N}$}
        \STATE{ put  $\mu_i \coloneqq  \alpha_i/\eta_m(x_i)$ for $i=1, \dots, N+1$ }
        \ELSE
        \STATE{ use Caratheodory subsampling to reduce the number of points to $N+1$ and calculate corresponding weights $\mu_i$ summing up to $1$}
        \ENDIF
\RETURN{$(x_i)_i$ and $(\mu_i)_i$}
    \end{algorithmic}
\end{algorithm}

\noindent\begin{algorithm}[t] \caption{\texttt{Construction of a discrete Parseval frame}}\label{algo1}
    \begin{tabularx}{\textwidth}{lp{100pt}X}
        \textbf{Input:} &
        $\Omega$ &
        compact topological space and Borel probability measure $\mu$ \\
        &
        $\varphi_1, \dots, \varphi_n\colon \Omega\to\mathds C$ &
        mutually orthonormal and continuous functions on $\Omega$ w.r.t. $\mu$ \\[2pt]
        \hline\\[-8pt]
        \textbf{Output:} &
        $(x_i)_i$ and $(\mu_i)_i$ &
        satisfying \eqref{f100} and $\mu_i$ summing up to $1$, $i=1,\dots,N+1$
        \\[2pt]\hline
    \end{tabularx}
    \begin{algorithmic}[1]
         \STATE{
$\bm{\varphi}(x) \coloneqq (\varphi_1(x), \dots, \varphi_n(x))^\top$ }
        \STATE{ put $N \coloneqq \dim(\Span\{\varphi_k\overline\varphi_l\}_{k,l=1}^{n})$ }
        \STATE{
            find $x_1, \dots, x_{N+1}\in \Omega$ and $\mu_1, \dots, \mu_{N+1}\ge 0$ with $\mu_1+\dots+\mu_{N+1}=1$ such that
            \begin{equation*}
                \Big\|\sum_{i=1}^{N+1}\mu_i \bm{\varphi}(x_i) \cdot \bm{\varphi}(x_i)^\ast - \bm I_n\Big\|_{F}^2
                = 0
            \end{equation*}
              }
        \RETURN{$(x_i)_i$ and $(\mu_i)_i$}
    \end{algorithmic}
\end{algorithm} 

\begin{remark}\label{rem:finite} If $\Omega$ is a finite set and $\varphi_1,\dots,\varphi_n:\Omega\to \mathds{C}$ linearly independent vectors in $\mathds{C}^{|\Omega|}$ then the characterization in \Cref{discr_orth} remains valid.
\end{remark} 

\begin{remark} \begin{enumerate}[(i)]
    \item
    In the language of the frame community the result in \Cref{discr_orth} essentially tells us that if we start with a continuous Parseval frame $(\bm{\varphi}(x))_{x\in \Omega}$ in the sense that ($\mu$ Borel probability measure, see also Definition 1.2 in \cite{FS19})
    \begin{equation}\label{tight_frame}
        \|\bm{a}\|_2^2
        = \int_\Omega |\langle \bm{a}, \bm{\varphi}(x)\rangle|^2\;\mathrm d\mu(x)\quad,\quad \bm{a} \in \mathds{C}^n\,,
    \end{equation}
    then there are non-negative weights $\mu_i$, which sum up to one, and points $x_i$ such that
    \begin{equation}\label{f100}
        \|\bm{a}\|_2^2 = \sum\limits_{i=1}^{N+1}\mu_i|\langle \bm{a}, \bm{\varphi}(x_i)\rangle|^2\quad,\quad \bm{a} \in \mathds{C}^n\,.
    \end{equation}
    In other words, the frame is still Parseval with respect to a discrete probability measure with at most $N+1$ atoms. We do not just prove their existence, we rather give concrete methods how to obtain these points and weights, cf.\ \Cref{algo0} and \Cref{algo1}.

    \item
    Note, that in the classical frame setting the system of vectors
    \begin{equation}\label{disc_frame}
        \bm{\varphi}_\mu(x_i) = \sqrt{\mu_i}(\varphi_1(x_i),\dots,\varphi_n(x_i))^\top \in \mathds{C}^n\quad,\quad i=1,\dots,N+1,
    \end{equation}
    constructed in \Cref{discr_orth}, constitutes a discrete Parseval frame in $\mathds{C}^n$. Hence, there is a relation to the notion of ``scalable frames'' as introduced in \cite{Cahill_Chen_2013, Kutyniok_Okoudjou_Philipp,Casazza_Chen_2017}. Let us emphasize that the property in \eqref{f100} is stronger, since we require
    $\sum_{i=1}^{N+1} \mu_i = 1$.
    \end{enumerate}
\end{remark} 

\begin{remark}\label{rem:constant_in_Vn} If we have $\mathds 1_\Omega \in V_n$ from the start (and therefore $m=n$ in \eqref{Arho}), the condition~(i) in \Cref{discr_orth}, namely
    ${\bm I}_n \in \conv \{ (\varphi_k(x)\overline{\varphi_l(x)} )_{k,l=1}^{n} \colon\,x\in \Omega \}$,
    is equivalent to
   \begin{flalign*}
        \mathrm{(i')}
        &&
        {\bm I}_n \in \operatorname{cone} \bigg\{
        (\varphi_k(x)\overline{\varphi_l(x)}
        )_{k,l=1}^{n}
        \colon\,x\in \Omega \bigg\}\,,
        &&
   \end{flalign*}
   i.e., we need not require the summability to 1 of the coefficients explicitly.
    In this case, the number of points needed in (ii) can be reduced to $N$.
\end{remark}

\subsection{Equal norm tight frames with respect to a probability measure} \label{sec:scalable_frames}

Assume that $\varphi_1,\ldots,\varphi_n:\Omega\to\mathds{C}$ are orthonormal functions on a domain $\Omega$ with respect to some probability measure $\mu$ on $\Omega$
or, in other words, that the corresponding system of vectors 
$\bm{\varphi}(x) = (\varphi_1(x),\ldots,\varphi_n(x))^\top \in \mathds{C}^n$ represents a Parseval frame in $\mathds{C}^{n}$. In this setting condition~(iii) of \Cref{discr_orth} is fulfilled and from the previous consideration we learned that, as long as the set $\mathcal{M}$ in \eqref{eq:setH} is a compact subset of $\mathds{C}^{n\times n}$, we always find a discrete probability measure $\varrho = \sum_{i=1}^{M_{\varrho}} \varrho_i\delta_{x_i}$ via special $D$-optimal designs preserving the orthogonality. 
These are essentially the statements of \Cref{discr_orth} and \Cref{rem:finite}. 

We now again start with a system of functions $\varphi_1,\ldots,\varphi_n:\Omega \to \mathds{C}$ and, in what follows, we again require the compactness of the set $\mathcal{M}$ from \eqref{eq:setH}. This would be the case, for example, in the considered settings of \Cref{discr_orth} and \Cref{rem:finite}, where either $\Omega$ is finite or $\Omega$ is a compact topological space and $\varphi_i:\Omega \to \mathds{C}$, $i=1,\ldots,n$, represent continuous functions. However, this time we do not demand orthonormality of the functions and thus do not need a measure $\mu$ on $\Omega$ in the beginning. Instead, we are satisfied with linear independence. The functions then define an $n$-dimensional linear function space $V_n$.

Interestingly, it turned out in \cite{KW60, KPUU24} that $D$-optimal designs are actually useful to additionally control the norms $\|f\|_{\infty}$ of the functions $f\in V_n$. In fact, as we will see below, we always find a discrete probability measure $\lambda= \sum_{i=1}^{N} \lambda_i\delta_{x_i}$ on $\Omega$ with $N\le n^2+1$ such that 
\begin{equation}\label{Sup_Estimate}
\sup\limits_{x\in\Omega} |f(x)| \leq \sqrt{n} \Big(\sum_{i=1}^{N} \lambda_i |f(x_i)|^2\Big)^{1/2}
\quad\text{for all }f\in V_n \,.
\end{equation}
As above, denote with $\varrho = \sum_{i=1}^{M_{\varrho}} \varrho_i\delta_{x_i}$ a discrete probability measure on $\Omega$. Putting $\bm{A}_{\varrho} = \sum_{i=1}^{M_{\varrho}} \varrho_i\bm{\varphi}(x_i)\cdot \bm{\varphi}(x_i)^*$, due to the compactness of  $\mathcal{M}$, 
we then also have a maximizer
\begin{equation}\label{Aalpha2}
    \det(\bm A_{\lambda}) = \max\limits_{\varrho} \det(\bm{A}_{\varrho}) = \sup\limits_{\bm M \in \conv \mathcal{M}} \det(\bm M)\,.
\end{equation}
This time a rescaling by the Christoffel function as in \eqref{rescaling_of_varphi} is not necessary.
Taking Caratheodory's theorem into account, the measure $\lambda$ gives non-negative weights $(\lambda_i)_{i=1}^{N}$ and points $(x_i)_{i=1}^{N}$ with  $N =\dim(\Span\{\varphi_k\overline\varphi_l\}_{k,l=1}^{n})+1$.

The following theorem states our result in the case of a compact topological space $\Omega$ and continuous functions $\varphi_i:\Omega \to \mathds{C}$, $i=1,\ldots,n$, i.e., in the setting corresponding to \Cref{discr_orth}. An essential part of it is actually proved in \cite{KPUU24}.

\begin{theorem}\label{frame_statement} Let $\Omega$ denote a compact topological space and $\bm{\varphi}(x) = (\varphi_1(x),\dots,\varphi_n(x))^\top$ a vector of $n$ linearly independent continuous functions on $\Omega$. Further, let $N=\dim(\Span\{\varphi_k\overline\varphi_l\}_{k,l=1}^{n})$. \Cref{algo2} then results in a discrete probability measure $\lambda = \sum_{i=1}^{N+1} \lambda_i \delta_{x_i}$ and a corresponding strictly positive definite Hermitian matrix $\bm{A}_{\lambda} = \sum_{i=1}^{N+1} \lambda_i \bm{\varphi}(x_i) \cdot\bm{\varphi}(x_i)^*$ such that the new system  $\bm{\psi}(x) := \mathbf{A}_{\lambda}^{-1/2}\cdot \bm{\varphi}(x)$ satisfies
    \begin{equation}\label{eq101}
        \sum\limits_{i=1}^{N+1} \lambda_i|\langle \bm{a}, \bm{\psi}(x_i) \rangle|^2  = \|\bm{a}\|_2^2 \,
    \end{equation}
    for all $\bm{a} \in \mathds{C}^n$. In addition, we have $\|\bm{\psi}(x)\|_2 \leq \sqrt{n}$ for any $x\in \Omega$ and $\|\bm{\psi}(x)\|_2 = \sqrt{n}$ on $\operatorname{supp} \lambda$.
\end{theorem}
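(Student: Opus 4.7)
The plan is to mirror the $D$-optimal design construction already laid out around equation \eqref{Aalpha2} and then harvest the two norm statements from a Kiefer-Wolfowitz style first-order optimality argument, following the strategy of \cite{KPUU24}. First, because $\Omega$ is compact and $\bm\varphi$ is continuous, $\mathcal{M} = \{\bm\varphi(x)\bm\varphi(x)^\ast : x\in\Omega\}$ is compact in $\mathds{C}^{n\times n}$; by \Cref{CarSub}(ii) the convex hull $\conv\mathcal{M}$ is compact as well, and continuity of the determinant delivers a maximizer which, by a second application of \Cref{CarSub}(ii), can be written as $\bm A_\lambda = \sum_{i=1}^{N+1}\lambda_i \bm\varphi(x_i)\bm\varphi(x_i)^\ast$ with $\lambda_i \ge 0$ and $\sum_i \lambda_i = 1$.

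Next I would verify that $\bm A_\lambda$ is strictly positive definite. \Cref{Lemma_10_KPUU24} translates the linear independence of the $\varphi_k$ into $\Span_{\mathds{C}}\{\bm\varphi(x):x\in\Omega\} = \mathds{C}^n$, so one can pick $n$ points where the vectors $\bm\varphi(x_i)$ are linearly independent; placing uniform mass on them gives a probability measure with strictly positive $\det(\bm A_\varrho)$, hence $\det(\bm A_\lambda)>0$ and $\bm A_\lambda^{-1/2}$ is well defined. Setting $\bm\psi(x) := \bm A_\lambda^{-1/2}\bm\varphi(x)$, it follows that
\begin{equation*}
\sum_{i=1}^{N+1}\lambda_i \bm\psi(x_i)\bm\psi(x_i)^\ast = \bm A_\lambda^{-1/2}\Bigl(\sum_{i=1}^{N+1}\lambda_i \bm\varphi(x_i)\bm\varphi(x_i)^\ast\Bigr)\bm A_\lambda^{-1/2} = \bm I_n,
\end{equation*}
which, contracted with $\bm a$ and $\bm a^\ast$, is exactly \eqref{eq101}.

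Third, for the norm bound $\|\bm\psi(x)\|_2 \le \sqrt{n}$ and the equality on $\operatorname{supp}\lambda$, I would perturb along $\varrho_t = (1-t)\lambda + t\delta_y$ for $y \in \Omega$, $t\in[0,1]$. Since $\bm A_{\varrho_t} = \bm A_\lambda + t(\bm\varphi(y)\bm\varphi(y)^\ast - \bm A_\lambda)$ is affine in $t$, Jacobi's formula yields
\begin{equation*}
\frac{d}{dt}\log\det(\bm A_{\varrho_t})\bigg|_{t=0^+} = \Tr\bigl(\bm A_\lambda^{-1}(\bm\varphi(y)\bm\varphi(y)^\ast - \bm A_\lambda)\bigr) = \|\bm\psi(y)\|_2^2 - n.
\end{equation*}
Maximality of $\lambda$, together with $t=0$ lying on the boundary of $[0,1]$, forces this right derivative to be $\le 0$, proving $\|\bm\psi(y)\|_2 \le \sqrt n$ for every $y \in \Omega$. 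Equality on the support then drops out for free by taking the trace of the Parseval identity above: $\sum_i \lambda_i \|\bm\psi(x_i)\|_2^2 = \Tr(\bm I_n) = n$, and combined with $\|\bm\psi(x_i)\|_2^2 \le n$ and $\sum_i \lambda_i = 1$ this forces $\|\bm\psi(x_i)\|_2^2 = n$ whenever $\lambda_i > 0$.

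The main technical point, and the only step that genuinely depends on the work in \cite{KPUU24}, is the first-order perturbation in the third paragraph, since the $\varphi_k$ are complex-valued; however, the derivative is taken along the real parameter $t$ and $\log\det$ remains smooth on the cone of positive definite Hermitian matrices, so Jacobi's formula carries through verbatim. The remaining steps are structural and follow from \Cref{CarSub} (for the atom-count bound $N+1$) and \Cref{Lemma_10_KPUU24} (for the invertibility of $\bm A_\lambda$). \Cref{algo2} simply implements exactly this recipe: maximize $\det(\bm A_\varrho)$ over atomic probability measures of support size at most $N+1$, then define $\bm\psi$ by the whitening transformation $\bm A_\lambda^{-1/2}\bm\varphi$.
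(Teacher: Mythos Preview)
Your proposal is correct and follows essentially the same route as the paper: existence of the maximizer via compactness plus Carath\'eodory, the Parseval identity \eqref{eq101} from the whitening computation, and the trace argument for $\|\bm\psi(x_i)\|_2=\sqrt{n}$ on $\operatorname{supp}\lambda$ are all exactly as in the paper's proof. The one substantive difference is that the paper simply cites \cite[Prop.~9]{KPUU24} for the inequality $\|\bm\psi(x)\|_2\le\sqrt{n}$, whereas you spell out the underlying Kiefer--Wolfowitz first-order optimality argument (perturbation along $\varrho_t=(1-t)\lambda+t\delta_y$ and Jacobi's formula); this is precisely the content of that cited proposition, so your version is a self-contained expansion of the same proof rather than a different approach. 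One minor quibble: compactness of $\conv\mathcal{M}$ is not a consequence of \Cref{CarSub}(ii) per se but of the standard fact that the convex hull of a compact set in $\mathds{R}^N$ is compact (the paper invokes this directly in the discussion preceding \Cref{discr_orth}); this does not affect the argument.
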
 \begin{proof} The orthogonality relation in \eqref{eq101} with respect to the induced inner product coming from the discrete measure $\lambda$  is straightforward and a consequence of $\bm{\psi} = \bm{A}_{\lambda}^{-1/2}\cdot \bm{\varphi}$. At this point it is not relevant that $\lambda$ is the maximizing measure in \eqref{Aalpha2}. In fact, note that for any full rank matrix ${\bm B} \in \mathds{C}^{n\times(N+1)}$ and ${\bm A}:= {\bm B}\cdot {\bm B}^{\ast}$ one has
    \begin{equation*}
        ({\bm A}^{-1/2}\cdot{\bm B}) \cdot({\bm A}^{-1/2}\cdot{\bm B})^{\ast} = {\bm I}_n \,.
    \end{equation*}
    Applying this identity to $\bm B = (\sqrt{\lambda_j}\varphi_i(x_j))_{i=1,j=1}^{n,N+1}$ yields \eqref{eq101}.

    The property that $\lambda$ is the maximizing measure becomes relevant to prove $\|\bm  \psi(x)\|_2 \leq \sqrt{n}$ for all $x\in\Omega$. For this part we refer to \cite[Prop.\ 9]{KPUU24}, which yields this result (with $\varepsilon = 0$ in our context). 
    It remains to argue that $\|\bm{\psi}(x)\|_2 = \sqrt{n}$ on $\operatorname{supp} \lambda$. First note that \eqref{eq101} can be rewritten
    in the form
    \begin{equation*}
\bm{a}^{\ast} \Big( \sum\limits_{i=1}^{N+1} \lambda_i \bm{\psi}(x_i)\cdot \bm{\psi}(x_i)^{\ast} \Big) \bm{a} 
        = \|\bm{a}\|_2^2 \,.
    \end{equation*}
    As a consequence, the sum in the brackets is the identity matrix which has trace $n$. 
    We thus get
    \begin{equation*}
       n= \Tr \Big(\sum\limits_{i=1}^{N+1} \lambda_i \bm{\psi}(x_i)\cdot \bm{\psi}(x_i)^{\ast} \Big) = \sum\limits_{i=1}^{N+1} \lambda_i \|\bm{\psi}(x_i)\|_2^2 \,. 
    \end{equation*}
    Since $\sum_{i=1}^{N+1} \lambda_i = 1$, $\lambda_i>0$ for $x_i\in\operatorname{supp} \lambda$, and $\|{\bm \psi}(x)\|_2 \leq \sqrt{n}$ for all $x\in\Omega$, 
we obtain that $\|\bm{\psi}(x_i)\|_2 = \sqrt{n}$ for $x_i\in\operatorname{supp} \lambda$. 
\end{proof} 

\noindent\begin{algorithm}[t] \caption{\texttt{Construction of an equal norm tight frame}}\label{algo2}
    \begin{tabularx}{\textwidth}{lp{100pt}X}
        \textbf{Input:} & $\Omega$ & compact topological space \\
        & $\varphi_1, \dots, \varphi_n\colon \Omega\to\mathds C$ & linearly independent continuous functions on $\Omega$ \\[2pt]\hline\\[-8pt]
        \textbf{Output:} &   $(x_i)_i$ and $(\lambda_i)_i$    &  satisfying \eqref{eq101} and $\lambda_i>0$
        \\
        &  $\bm{\psi}(x)$ &  such that $\|\bm{\psi}(x_i)\|_2 = \sqrt{n}$ and $\|\bm{\psi}(x)\|_2 \leq \sqrt{n}$ for any $x\in \Omega$
        \\[2pt]\hline
    \end{tabularx}
    \begin{algorithmic}[1]
        \STATE{
$\bm{\varphi}(x) \coloneqq (\varphi_1(x), \dots, \varphi_n(x))^\top$ }
        \STATE{ put $\mathcal M \coloneqq \{\bm{\varphi}(x)\cdot \bm{\varphi}(x)^\ast  : x\in \Omega\}$ }
        \STATE{ put $N \coloneqq \dim(\Span\{\varphi_k\overline\varphi_l\}_{k,l=1}^{n})$ }
        \STATE{
        find $x_1, \dots, x_{N+1}\in \Omega$ and $\lambda_1, \dots, \lambda_{N+1}\ge 0$ with $\lambda_1+\dots+\lambda_{N+1}=1$ such that
        \begin{align*}
        \det\Big(\sum_{i=1}^{N+1}\lambda_i \bm{\varphi}(x_i)\cdot \bm{\varphi}(x_i)^\ast\Big)
        = \max_{\bm M\in\conv \mathcal M} \det(\bm M)
        \end{align*}
          }
        \STATE{ put $\bm{A}_{\lambda} = \sum_{i=1}^{N+1} \lambda_i \bm{\varphi}(x_i)\cdot \bm{\varphi}(x_i)^*$ }
        \STATE{ put
        $\bm{\psi}(x) := \mathbf{A}_{\lambda}^{-1/2}\cdot \bm{\varphi}(x)$ for $x\in \Omega$ }
        \STATE{ filter out the indices $i$ with $\lambda_i>0$; reindex accordingly }
        \RETURN{$\bm{\psi}(x)$; $(x_i)_i$ and $(\lambda_i)_i$ for indices $i$ with $\lambda_i>0$}
    \end{algorithmic}
\end{algorithm} 

\begin{remark} \Cref{frame_statement} can be adapted to the setting of \Cref{rem:finite}, i.e., linearly independent functions $\varphi_1,\ldots,\varphi_n:\Omega\to \mathds{C}$ on a
    finite set $\Omega$, in a straight-forward manner.
\end{remark} 

\begin{remark} The property $\|\bm{\psi}(x)\|_2 \le \sqrt{n}$ yields
\eqref{Sup_Estimate}. Indeed, for any $f\in V_n$ with $f(x)=\langle \bm{a}, \bm{\psi}(x)\rangle$ we have the estimate
\begin{equation*}
|f(x)|^2 = |\langle \bm{a}, \bm{\psi}(x)\rangle|^2  \leq \|\bm{a}\|^2_2 \|\bm{\psi}(x)\|^2_2 \le n \sum\limits_{i=1}^{N+1} \lambda_i|\langle \bm{a}, \bm{\psi}(x_i) \rangle|^2 =  n \sum\limits_{i=1}^{N+1} \lambda_i|f(x_i)|^2  \,.
\end{equation*}
\end{remark} 

\begin{remark}\label{rem:ENTF} The procedure described in \Cref{frame_statement} reminds on the way of making a given frame $\bm{\varphi}$ tight by multiplying with $\bm{A}^{-1/2}$, where $\bm{A} = \bm{T}_{\bm{\varphi}}^*\cdot \bm{T}_{\bm{\varphi}}$ denotes the frame matrix of $\bm{\varphi}$ and $\bm{T}_{\bm{\varphi}}$ its analysis operator. However, the transformed frame $\bm{\psi} = \bm{A}^{-1/2}\cdot \bm{\varphi}$ then usually does not consist of equal norm frame elements.
    A subsequent normalization and corresponding reweighting would yield an equal norm
    tight frame of type \eqref{eq101} or \eqref{tight_frame} with respect to a discrete probability measure.
    However, the separate normalization step would typically change the coefficient space such that $\Im(\bm{T}_{\bm{\psi}}) \neq \Im(\bm{T}_{\bm{\varphi}})$. In \Cref{frame_statement} the weights and points are constructed simultaneously and result in a discrete measure and a corresponding equal norm tight frame $\bm{\psi}$ where $\Im(\bm{T}_{\bm{\psi}}) = \Im(\bm{T}_{\bm{\varphi}})$.
    This construction thus maintains a stronger connection to the original frame. 
    Let us emphasize that there is significant interest in the frame community for the construction of unit/equal norm tight frames (ENTF,UNTF), see \cite{CFM12,CK03}.
\end{remark} %
 \section{Application: Exact Marcinkiewicz-Zygmund inequalities and quadrature}\label{sec:MZ_inequalities}

Frames have a related structure to $L_2$-MZ inequalities and tightness in frames means exactness in $L_2$-MZ inequalities. With that, we immediately obtain a method for constructing points and weights fulfilling the identity in the following theorem.

\begin{theorem}\label{exactmz} Let $\mu$ be a Borel probability measure on a compact topological space $\Omega$ and $V_n$ be an $n$-dimensional subspace 
    of $C(\Omega)$, the space of continuous complex-valued functions on $\Omega$. Then there exists an $N\le \dim(\Span\{f\cdot \overline{g} : f,g\in V_n\}) \le n^2$ and $N+1$ points $x_1,\dots,x_{N+1}\in \Omega$ together with non-negative weights satisfying $\sum_{i=1}^{N+1} \mu_i = 1$ such that 
    \begin{equation}\label{exactmz_formula}
        \int_\Omega |f(x)|^2 \;\mathrm d\mu(x) = \sum_{i=1}^{N+1} \mu_i |f(x_i)|^2
        \quad\text{for all}\quad
        f\in V_n \,.
    \end{equation}
\end{theorem}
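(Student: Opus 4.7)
The plan is to reduce the statement to \Cref{discr_orth} by first orthonormalizing a basis of $V_n$ with respect to $\mu$. More precisely, I would start by picking any basis of $V_n$ and applying the Gram–Schmidt process in the Hilbert space $L_2(\mu)$ to produce continuous functions $\varphi_1,\dots,\varphi_n\colon\Omega\to\mathds C$ which form a basis of $V_n$ and satisfy $\int_\Omega \varphi_k(x)\overline{\varphi_l(x)}\,\mathrm d\mu(x)=\delta_{k,l}$. Continuity is preserved because the Gram–Schmidt coefficients are finite linear combinations, and linear independence in $C(\Omega)$ is unaffected by passing to an $L_2(\mu)$-orthonormal basis. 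Thus condition~(iii) of \Cref{discr_orth} is met for the system $(\varphi_k)_{k=1}^n$ with the given measure $\mu$.

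Next I would invoke the equivalence (iii)$\Leftrightarrow$(ii) in \Cref{discr_orth} to obtain $N+1$ points $x_1,\dots,x_{N+1}\in\Omega$ and non-negative weights $\mu_1,\dots,\mu_{N+1}$ with $\sum_{i=1}^{N+1}\mu_i=1$ such that
\begin{equation*}
\sum_{i=1}^{N+1}\mu_i\,\varphi_k(x_i)\overline{\varphi_l(x_i)}=\delta_{k,l},\qquad k,l=1,\dots,n,
\end{equation*}
where $N=\dim(\Span\{\varphi_k\overline{\varphi_l}\}_{k,l=1}^n)$. Since $(\varphi_k)$ is a basis of $V_n$, one has $\Span\{\varphi_k\overline{\varphi_l}\}_{k,l=1}^n=\Span\{f\overline g : f,g\in V_n\}$, so this $N$ agrees with the one in the statement, and the trivial bound $N\le n^2$ is provided by \Cref{lemma_dimensions}.

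To finish, I would expand an arbitrary $f\in V_n$ in the orthonormal basis as $f=\sum_{k=1}^n a_k\varphi_k$. On the one hand, orthonormality with respect to $\mu$ gives $\int_\Omega |f(x)|^2\,\mathrm d\mu(x)=\sum_{k=1}^n|a_k|^2=\|\bm a\|_2^2$; on the other hand, the discrete orthogonality relations yield
\begin{equation*}
\sum_{i=1}^{N+1}\mu_i|f(x_i)|^2=\sum_{k,l=1}^n a_k\overline{a_l}\sum_{i=1}^{N+1}\mu_i\varphi_k(x_i)\overline{\varphi_l(x_i)}=\sum_{k=1}^n|a_k|^2,
\end{equation*}
and comparing the two expressions establishes \eqref{exactmz_formula}.

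The only place requiring actual work is the Gram–Schmidt step, since everything else is a direct application of \Cref{discr_orth}; however, even that step is routine because $V_n\subset C(\Omega)$ is finite-dimensional, $\mu$ is a Borel probability measure on the compact space $\Omega$, and $V_n\subset L_2(\mu)$ automatically. Thus I do not anticipate a genuine obstacle; the content of the theorem is really the content of \Cref{discr_orth}, repackaged so as not to assume that the given basis is already orthonormal.
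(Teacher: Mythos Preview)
Your proposal is correct and follows essentially the same route as the paper: choose an $L_2(\mu)$-orthonormal basis of $V_n$, apply \Cref{discr_orth} (specifically (iii)$\Rightarrow$(ii)) to obtain the discrete orthonormality, and then conclude via Parseval's identity. The paper's proof is a three-line sketch of exactly this, whereas you spell out the Gram--Schmidt step and the Parseval computation explicitly.
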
 

\begin{proof} Choose an orthonormal basis in $V_n$. By \Cref{discr_orth} this basis is also discretely orthonormal. The statement is obtained by Parseval's identity.  
\end{proof} 

The respective points and weights (the discrete measure) are determined by finding a maximizer through \eqref{Aalpha} taking Carathéodory subsampling into account. Exact $L_2$-MZ inequalities have a useful property. They also discretize (subsample) the corresponding inner product as the following corollary shows.

\begin{corollary}\label{product_of_functions} Let $\mu$ be a Borel probability measure on a compact topological space $\Omega$ and $V_n$ be an $n$-dimensional subspace of the complex-valued $C(\Omega)$. Then there exists an $N\leq n^2$ and $N+1$ points $x_1,\dots,x_{N+1}\in \Omega$ together with non-negative weights satisfying $\sum_{i=1}^{N+1} \mu_i = 1$ such that for all $f, g \in V_n$ it holds
    \begin{equation*}\label{statement}
        \int_\Omega f(x) \overline{g(x)} \;\mathrm d\mu(x)
        = \sum_{i=1}^{N+1} \mu_i f(x_i) \overline{g(x_i)} \,.
    \end{equation*}
\end{corollary}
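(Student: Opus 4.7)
The plan is to reduce the bilinear statement to the quadratic (norm) statement already provided by \Cref{exactmz} via the complex polarization identity. Since $V_n$ is a complex subspace of $C(\Omega)$, for any $f,g\in V_n$ and every $k\in\{0,1,2,3\}$ the function $f+\mathrm i^k g$ still lies in $V_n$, so \Cref{exactmz} applies to each of them with one common set of points $x_1,\dots,x_{N+1}$ and weights $\mu_1,\dots,\mu_{N+1}$.

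First, I would invoke \Cref{exactmz} once to fix $N$, the nodes $(x_i)_i$ and the weights $(\mu_i)_i$ with $\sum_i\mu_i=1$ satisfying \eqref{exactmz_formula} for every $h\in V_n$. Next, I would recall the pointwise polarization identity valid for complex numbers,
\begin{equation*}
    f(x)\overline{g(x)}
    = \frac{1}{4}\sum_{k=0}^{3} \mathrm i^{k} \bigl|f(x)+\mathrm i^{k}g(x)\bigr|^{2},
\end{equation*}
which holds for every $x\in\Omega$. Integrating against $\mu$ and applying \eqref{exactmz_formula} to each of the four functions $f+\mathrm i^{k}g\in V_n$ individually yields
\begin{equation*}
    \int_\Omega f(x)\overline{g(x)}\;\mathrm d\mu(x)
    = \frac{1}{4}\sum_{k=0}^{3}\mathrm i^{k}\int_\Omega\bigl|f(x)+\mathrm i^{k}g(x)\bigr|^{2}\;\mathrm d\mu(x)
    = \frac{1}{4}\sum_{k=0}^{3}\mathrm i^{k}\sum_{i=1}^{N+1}\mu_i\bigl|f(x_i)+\mathrm i^{k}g(x_i)\bigr|^{2}.
\end{equation*}
Finally, applying the polarization identity in reverse at each node $x_i$, the inner sum collapses to $f(x_i)\overline{g(x_i)}$, giving exactly $\sum_{i=1}^{N+1}\mu_i f(x_i)\overline{g(x_i)}$ and completing the argument.

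There is no real obstacle: the only subtlety worth mentioning is that one must use the same discretization $(x_i,\mu_i)$ simultaneously for all four auxiliary functions, which is guaranteed because \Cref{exactmz} furnishes a single set of nodes and weights that works uniformly on the whole space $V_n$, and closure of $V_n$ under the complex linear combinations $f\mapsto f+\mathrm i^{k}g$ ensures each auxiliary function falls within the scope of that theorem.
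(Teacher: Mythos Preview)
Your argument is correct and coincides with the paper's own proof: both invoke \Cref{exactmz} once to obtain a single set of nodes and weights valid for all of $V_n$, then apply the complex polarization identity (you write it as $\tfrac14\sum_{k=0}^{3}\mathrm i^{k}|f+\mathrm i^{k}g|^{2}$, the paper splits it into the four terms $f\pm g$, $f\pm\mathrm i g$, which is the same thing). The only point you flag---that the same discretization must serve all four auxiliary functions---is exactly the observation the paper relies on as well.
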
 

\begin{proof} Due to \Cref{exactmz}, for any $f, g \in V_n$ the relations \eqref{exactmz_formula} hold.
    Hence, they also hold for $f+g$, $f-g$, $f+i g$ and  $f-ig$ that belong to the subspace $V_n$.
    Using the polarization identity
    \begin{align*}
        \int_\Omega f(x)\overline{g(x)} \;\mathrm d\mu(x)
        &= \frac{1}{4} \left(  \int_\Omega |f(x)+g(x)|^2 \;\mathrm d\mu(x)  -   \int_\Omega |f(x)-g(x)|^2 \;\mathrm d\mu(x)\right)\\
        &\quad+ \frac{\mathrm i}{4} \left(  \int_\Omega |f(x)+\mathrm ig(x)|^2 \;\mathrm d\mu(x)  -   \int_\Omega |f(x)-\mathrm ig(x)|^2 \;\mathrm d\mu(x)\right) \,, 
    \end{align*}
    we then obtain
    \begin{align*}
        \int_\Omega f(x)\overline{g(x)} \;\mathrm d\mu(x)
        &=  \frac{1}{4} \left(  \sum_{i=1}^{N+1} \mu_i |f(x_i)+g(x_i)|^2  -   \sum_{i=1}^{N+1} \mu_i |f(x_i)-g(x_i)|^2 \right) \\
        &\quad+  \frac{\mathrm i}{4} \left( \sum_{i=1}^{N+1} \mu_i |f(x_i)+\mathrm ig(x_i)|^2 - \sum_{i=1}^{N+1} \mu_i |f(x_i)-\mathrm ig(x_i)|^2 \right) \\
        &= \sum_{i=1}^{N+1} \mu_i f(x_i)\overline{g(x_i)} \,.\qedhere
    \end{align*}
\end{proof} 

We obtain general quadrature formulas in the sense of Tchakaloff using our approach.
See Tchakaloff \cite{Tschak57} and also \cite{Pu97}, \cite[Thm.\ 4.1]{DaPrTeTi19}, which works for the case of real functions and has the drawback of not being constructive.

\begin{theorem}[Tschakaloff for complex-valued functions]\label{quadrature_formula}
    Let $\mu$ be a Borel probability measure on a compact topological space $\Omega$ and $V_n\subset C(\Omega)$ (complex-valued). Then there exists an $N\leq 2n$ and $x_1,\dots,x_{N+1}\in \Omega$ such that with non-negative weights satisfying $\mu_i+\dots+\mu_{N+1} = 1$ it holds
    \begin{equation*}
        \int_\Omega f(x) \;\mathrm d\mu(x) = \sum_{i=1}^{N+1} \mu_i f(x_i)
        \quad\text{for all}\quad
        f\in V_n \,.
    \end{equation*}
\end{theorem}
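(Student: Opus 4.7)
The plan is to reduce the statement to the classical Caratheodory theorem applied in a real $2n$-dimensional space. Going through the bilinear framework of \Cref{exactmz} or \Cref{product_of_functions} would only produce the cruder bound $N\le n^2$, so a more direct argument is required.

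Fix a basis $\varphi_1,\dots,\varphi_n$ of $V_n$ and put $\bm{\varphi}(x):=(\varphi_1(x),\dots,\varphi_n(x))^\top\in\mathds{C}^n$. By compactness of $\Omega$ and continuity of $\bm{\varphi}$, the image $K:=\bm{\varphi}(\Omega)$ is a compact subset of $\mathds{C}^n\cong\mathds{R}^{2n}$. A quadrature rule of the claimed form is precisely a representation of the moment vector
\[
\bm{m}:=\int_\Omega \bm{\varphi}(x)\,\mathrm d\mu(x)\in\mathds{C}^n
\]
as a convex combination $\bm{m}=\sum_{i=1}^{N+1}\mu_i\bm{\varphi}(x_i)$ of elements of $K$; by linearity in the basis this immediately yields $\int_\Omega f\,\mathrm d\mu=\sum_i\mu_i f(x_i)$ for every $f\in V_n$.

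The first key step is to show $\bm{m}\in\conv K$. For any $\mathds{R}$-linear functional $L$ on $\mathds{C}^n\cong\mathds{R}^{2n}$, the probability-measure property of $\mu$ gives
\[
L(\bm{m})=\int_\Omega L(\bm{\varphi}(x))\,\mathrm d\mu(x)\le\sup_{x\in\Omega}L(\bm{\varphi}(x))=\sup_{\bm{k}\in K}L(\bm{k}).
\]
Since $\conv K$ is compact in the finite-dimensional space $\mathds{R}^{2n}$ and hence closed, a Hahn--Banach separation argument identifies it with the intersection of all closed half-spaces containing $K$ and forces $\bm{m}\in\conv K$. The second step is Caratheodory's theorem in $\mathds{R}^{2n}$ (exactly as used in the proof of \Cref{CarSub}): every element of $\conv K$ is a convex combination of at most $2n+1$ points of $K$. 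Applying this to $\bm{m}$ produces the desired points $x_1,\dots,x_{N+1}\in\Omega$ and non-negative weights $\mu_1,\dots,\mu_{N+1}$ summing to one, with $N+1\le 2n+1$, i.e.\ $N\le 2n$.

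The main obstacle is really only the inclusion $\bm{m}\in\conv K$: in full generality this is precisely the content of the classical Tschakaloff theorem, but in our setting it becomes elementary, since compactness of $\Omega$ together with continuity of the basis keeps everything inside $\mathds{R}^{2n}$ and permits the direct Hahn--Banach argument above.
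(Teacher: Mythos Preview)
Your argument is correct. The identification $\mathds{C}^n\cong\mathds{R}^{2n}$, the verification that $L(\bm m)\le\sup_{\bm k\in K}L(\bm k)$ for every real-linear functional $L$, and the separation/Caratheodory conclusion all go through cleanly under the compactness and continuity hypotheses.

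The paper, however, takes a different route. It does \emph{not} bypass the bilinear machinery: in Step~1 it applies \Cref{product_of_functions} to the enlarged space $V_n\cup\{\mathds 1_\Omega\}$ with $g=\mathds 1_\Omega$, obtaining a quadrature rule with $M\le(n+1)^2$ points and weights summing to one. Only then, in Step~2, does it perform a second Caratheodory reduction in $\mathds{R}^{2n}$ (via the real/imaginary-part matrix) to arrive at $N\le 2n$. So your remark that ``going through the bilinear framework \ldots\ would only produce the cruder bound $N\le n^2$'' misreads the paper's strategy: the $n^2$ step is just an intermediate stage that supplies an explicit finite convex combination, which is then thinned.

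The trade-off is this: your proof is shorter and more elementary, but the inclusion $\bm m\in\conv K$ is established non-constructively via Hahn--Banach separation. The paper's two-step argument is longer, yet every ingredient is algorithmic (the $D$-optimal design of \Cref{algo0}/\Cref{algo1} followed by Caratheodory subsampling), which is precisely the point emphasized in the introduction when contrasting with the classical Tschakaloff theorem.
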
 

\begin{proof} \emph{Step~1.}
    Applying \Cref{product_of_functions} to $V_n\cup\{\mathds 1_\Omega\}$ and setting $g=\mathds 1_\Omega$ gives the stated quadrature condition with $M \le (n+1)^2$ points $y_1, \dots, y_{M+1}$ and weights $\beta_1, \dots, \beta_{M+1}$.

    \emph{Step~2.}
    In order to obtain the stated number of points, we need to apply a second Carathéodory step.
    For $\varphi_1, \dots, \varphi_n$ a basis of $V_N$, the points and weights from Step~1 fulfill $\bm A(\beta_1 \, \cdots \, \beta_{M+1})^\top = \bm b$ with
    \begin{equation*}
        \bm A =
        \begin{pmatrix}
            \Re(\varphi_1(y_1)) & \dots & \Re(\varphi_1(y_{M+1})) \\
            \Im(\varphi_1(y_1)) & \dots & \Im(\varphi_1(y_{M+1})) \\
            \vdots & \ddots & \vdots \\
            \Re(\varphi_n(y_1)) & \dots & \Re(\varphi_n(y_{M+1})) \\
            \Im(\varphi_n(y_1)) & \dots & \Im(\varphi_n(y_{M+1})) \\
        \end{pmatrix}
        \in\mathds R^{2n\times M+1}
        \quad\text{and}\quad
        \bm b =
        \begin{pmatrix}
            \Re(\int_\Omega \varphi_1 \;\mathrm d\mu) \\
            \Im(\int_\Omega \varphi_1 \;\mathrm d\mu) \\
            \vdots \\
            \Re(\int_\Omega \varphi_n \;\mathrm d\mu) \\
            \Im(\int_\Omega \varphi_n \;\mathrm d\mu) \\
        \end{pmatrix}
        \in\mathds R^{2n} \,.
    \end{equation*}
    We interpret this as $\bm b$ being represented by a convex combination of the column vectors of $\bm A$.
    Applying Carathéodory here gives $N \le 2n$, non-negative real weights $\mu_1, \dots, \mu_{N+1}$, and a subset of points $\{x_1, \dots, x_N\}\subset\{y_1, \dots, y_M\}$ such that we have in $\mathds{C}^n$ 
    $$
        \sum\limits_{i=1}^{N+1} \mu_i (\varphi_1(x_1),...,\varphi_n(x_n))^\top = \Big(\int_\Omega \varphi_1 \;\mathrm d\mu,...,\int_\Omega \varphi_n \;\mathrm d\mu\Big)^\top\,.
    $$
    The statement in \Cref{quadrature_formula} follows by linearity since any $f$ can be represented as complex linear combination in the functions $\varphi_1,...,\varphi_n$.
\end{proof} 

Next, we show a method for constructing exact $L_p$-MZ inequalities with even $p$ by using similar techniques as in \cite[Theorem~3.1]{DaPrTeTi19} but utilizing the exact $L_2$ result in \Cref{exactmz} instead of Tchakaloff's Theorem on exact quadrature.

\begin{corollary}\label{even_MZ} Let $\mu$ be a Borel probability measure on a compact topological space $\Omega$ and $V_n\subset C(\Omega)$ (complex-valued) and $p$ even.
    Then there exists an $N\leq 2\binom{n+p-1}{p} \sim n^p$ and $x_1,\dots,x_{N+1}\in \Omega$ with non-negative weights satisfying $\mu_i+\dots+\mu_{N+1} = 1$ such that
    \begin{equation}
        \int_\Omega |f(x)|^p \;\mathrm d\mu(x) = \sum_{i=1}^{N+1} \mu_i |f(x_i)|^p
        \quad\text{for all}\quad
        f\in V_n \,.\label{eq:Lp_MZ_discretization}
    \end{equation}
\end{corollary}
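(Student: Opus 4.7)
The plan is to reduce the even-$p$ case to the exact $L_2$ Marcinkiewicz--Zygmund result already established in \Cref{exactmz} via the elementary factorisation $|f|^p = |f^q|^2$ with $q := p/2$. For any $f \in V_n$ the power $f^q$ is a linear combination of $q$-fold products of basis functions, hence lies in the $q$-fold symmetric product space
\[
    W := \Span\{\varphi_{i_1}\cdots\varphi_{i_q} : 1 \le i_1,\ldots,i_q \le n\} \subset C(\Omega),
\]
whose complex dimension is at most $\binom{n+q-1}{q}$.

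Concretely, I would apply \Cref{exactmz} to the subspace $W$. This yields points $x_1,\ldots,x_{N+1} \in \Omega$ together with non-negative weights $\mu_1,\ldots,\mu_{N+1}$ summing to one such that
\[
    \int_\Omega |g(x)|^2\;\mathrm d\mu(x) = \sum_{i=1}^{N+1}\mu_i|g(x_i)|^2 \quad\text{for every } g \in W.
\]
Specialising to $g = f^q$ with $f \in V_n$ and using $|f^q|^2 = |f|^p$ pointwise immediately delivers the identity \eqref{eq:Lp_MZ_discretization}. The non-negativity of the weights and the normalisation $\sum_i \mu_i = 1$ are inherited verbatim from \Cref{exactmz}, and no further assumption on $V_n$ beyond linear independence of the generators is used.

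What remains is the bound on $N$. By \Cref{exactmz} it suffices to control $\dim\Span\{g\overline{h} : g,h \in W\}$; elements of this span are monomials $\varphi^{\alpha}\overline{\varphi^{\beta}}$ with $|\alpha| = |\beta| = q$, i.e. polynomials of total degree $p$ in the $2n$ symbols $\varphi_1,\ldots,\varphi_n,\overline{\varphi_1},\ldots,\overline{\varphi_n}$. A naive tally gives $\binom{n+q-1}{q}^2 \sim n^p$; to sharpen this to the stated $N \le 2\binom{n+p-1}{p}$, one regroups these monomials by their combined degree in the $2n$ generators and then invokes the real/imaginary splitting trick familiar from the proof of the complex Tschakaloff theorem \Cref{quadrature_formula} — precisely the dimension-counting step used in \cite[Theorem~3.1]{DaPrTeTi19}, with our exact $L_2$-MZ result playing the role of Tchakaloff's theorem. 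The main (and only) technical obstacle I anticipate is this final counting step, since the core of the argument is just the one-line reduction $|f|^p = |f^q|^2$ composed with \Cref{exactmz}.
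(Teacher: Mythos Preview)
Your reduction is exactly the paper's: set $W_n = \Span\{\varphi_1^{k_1}\cdots\varphi_n^{k_n} : k_1+\cdots+k_n = p/2\}$, apply \Cref{exactmz} to $W_n$, and specialise the resulting exact $L_2$-identity to $g=f^{p/2}$ for $f\in V_n$.

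On the dimension count you flag as the remaining obstacle, the paper does \emph{not} go through any real/imaginary splitting in the style of \Cref{quadrature_formula}. It simply observes that products $f\cdot g$ with $f,g\in W_n$ are degree-$p$ monomials in $\varphi_1,\ldots,\varphi_n$, bounds their span by $\binom{n+p-1}{p}$, and plugs this directly into \Cref{exactmz}. You are right that \Cref{exactmz} literally controls $N$ by $\dim(\Span\{f\cdot\bar g\})$ rather than $\dim(\Span\{f\cdot g\})$; the paper's proof writes the latter without comment, so your caution here is well placed. However, your proposed remedy via the splitting trick is not what the paper does and would itself need care --- the crude monomial count $\binom{n+q-1}{q}^{2}$ for $\varphi^{\alpha}\overline{\varphi^{\beta}}$ already exceeds $2\binom{n+p-1}{p}$ when $n=3$, $p=4$, so that route does not immediately deliver the stated constant either.
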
 

\begin{proof} Using a basis of $V_n = \Span\{\varphi_1, \dots, \varphi_n\}$ we define the space
    \begin{equation*}
        W_n
        = \Span\Big\{ \varphi_1^{k_1} \cdots \varphi_n^{k_n} : k_1, \dots, k_n\in \mathds N_0,\, k_1+\dots+k_n = \frac{p}{2} \Big\} \,.
    \end{equation*}
    We have
    \begin{align*}
        \dim\Big(\Span\{f\cdot g : f,g\in W_n\}\Big)
        &= \dim\Big(\Span\Big\{ \varphi_1^{k_1} \cdots \varphi_n^{k_n} : k_1,\dots,k_n\in \mathds N_0,\, k_1+\dots+k_n = p \Big\}\Big) \\
        &\le \Big| \Big\{ k_1,\dots,k_n\in \mathds N_0 : k_1+\dots+k_n = p \Big\} \Big| \\
        &= \binom{n+p-1}{p} \,,
    \end{align*}
    where the last equality is known as the number of weak compositions of the integer $p$ into $n$ parts.
    
    Applying \Cref{exactmz} to $W_n$ gives an $N \le \binom{n+p-1}{p}$ with points $x_1, \dots, x_{N+1}\in \Omega$ and non-negative weights satisfying $\mu_i+\dots+\mu_{N+1} = 1$ such that
    \begin{equation*}
        \int_\Omega |f(x)|^2 \;\mathrm d\mu(x) = \sum_{i=1}^{N+1} \mu_i |f(x_i)|^2
        \quad\text{for all}\quad
        f\in W_n \,.
    \end{equation*}
    In particular, it holds for all $f^{p/2}$ with $f\in V_n$, which is the assertion.
\end{proof} 

\Cref{even_MZ} contributes to \cite[Open~problem~10]{DaPrTeTi19} in the sense that it gives a constructive method to obtain an $L_p$-MZ inequality for even $p$.
Note, constructiveness here is meant in the sense of \cite[Section~2.5]{DaPrTeTi19} where a greedy algorithm is said to be constructive.
Since we have positive weights, it goes even further to make \cite[Corollary~4.2]{DaPrTeTi19} constructive.

Note, in general the number of points is optimal, cf.\ \cite[Theorem~3.2]{DaPrTeTi19}.
The condition of $p$ being an even number is necessary as it has been shown that otherwise such inequalities do not exist in general, cf.\ \cite[Prop.\ 3.3]{DaPrTeTi19}.

\section{Specific examples}\label{Sect:examples} 

\subsection{The $d$-torus} 

Let $\mathbb{T}=[0, 1)$ be a torus where the endpoints of the interval are identified. By 
$\mathbb{T}^d$ we denote a $d$-dimensional torus and equip it with the normalized Lebesgue measure 
${\rm d} \bm{x}$.
Let further $L_2:=L_2(\mathbb{T}^d, 
{\rm d} \bm{x})$ and 
$V
= V(I)
\coloneqq \operatorname{span}\{ \exp (
2\pi
\mathrm{i} \langle\bm k, \bm x\rangle) \colon 
\bm{k}\in I\subset \mathds{Z}^d, \ \bm{x}\in \mathbb{T}^d \}$. In what follows, we assume that $|I|$ is finite. Let us denote with $\mathcal{D}(I) := \{\bm{k}-\bm{\ell}~:~\bm{k},\bm{\ell}\in I\}$\,. 

\begin{corollary}\label{exactmz_torus} There exist points $\bm{x}^1,\dots, \bm{x}^{N} \in \mathbb{T}^d$ and non-negative weights $w_1, \dots, w_N$ satisfying $w_1+...+w_N = 1$ with $N = |\mathcal{D}(I)| \leq |I|^2-|I|+1$ such that for all $f\in V$ it holds
    \begin{equation}\label{torus_mz_inequality}
        \int_{\mathbb{T}^d} |f(\bm{x})|^2  \;{\mathrm d} \bm{x}
        = \sum_{i=1}^{N} w_i |f(\bm{x}^i)|^2 \,.
    \end{equation}
\end{corollary}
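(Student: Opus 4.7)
The plan is to specialize \Cref{exactmz} to $\Omega=\mathbb T^d$ with the Lebesgue probability measure and $V_n=V(I)$, using that the characters $\varphi_{\bm k}(\bm x):=\exp(2\pi\mathrm i\langle\bm k,\bm x\rangle)$, $\bm k\in I$, already form an orthonormal basis of $V(I)$ in $L_2(\mathbb T^d)$, and then to sharpen the node count via \Cref{rem:constant_in_Vn}.

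\emph{First,} I reduce to the case $\bm 0\in I$. For an arbitrary $\bm k_0\in I$ set $I':=I-\bm k_0$. Given $f\in V(I)$, the function $g(\bm x):=\exp(-2\pi\mathrm i\langle\bm k_0,\bm x\rangle)\,f(\bm x)$ lies in $V(I')$ and satisfies $|g(\bm x)|=|f(\bm x)|$ pointwise on $\mathbb T^d$, so any exact $L_2$-MZ identity for $V(I')$ at nodes $(\bm x^i)$ with weights $(w_i)$ transfers verbatim to $V(I)$ at the same nodes and weights. Since $\mathcal D(I')=\mathcal D(I)$, the node count is preserved, and I may henceforth assume $\bm 0\in I$, in which case $\mathds 1_{\mathbb T^d}\in V(I)$.

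\emph{Next,} I identify the dimension governing \Cref{exactmz} via
\[
\Span\{\varphi_{\bm k}\overline{\varphi_{\bm\ell}}:\bm k,\bm\ell\in I\}=\Span\{\exp(2\pi\mathrm i\langle\bm m,\cdot\rangle):\bm m\in\mathcal D(I)\},
\]
whose dimension equals $|\mathcal D(I)|$, since distinct characters on $\mathbb T^d$ are $L_2$-orthogonal and hence linearly independent. Because $\mathds 1_{\mathbb T^d}\in V(I)$, \Cref{rem:constant_in_Vn} allows conclusion~(ii) of \Cref{discr_orth} to be reached using only $N=|\mathcal D(I)|$ nodes rather than $N+1$. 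Running the Parseval argument in the proof of \Cref{exactmz} on the orthonormal basis $(\varphi_{\bm k})_{\bm k\in I}$ with this sharpened Caratheodory step then produces nodes $\bm x^1,\dots,\bm x^N\in\mathbb T^d$ and non-negative weights $w_1,\dots,w_N$ summing to one such that \eqref{torus_mz_inequality} holds for every $f\in V(I)$.

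\emph{Finally,} for the combinatorial bound $|\mathcal D(I)|\le |I|^2-|I|+1$, I note that among the $|I|^2$ ordered pairs $(\bm k,\bm\ell)\in I\times I$ the $|I|$ diagonal pairs all contribute the single difference $\bm 0$, while the remaining $|I|^2-|I|$ off-diagonal pairs contribute at most that many additional distinct elements of $\mathcal D(I)$. The only delicate point is the appeal to \Cref{rem:constant_in_Vn}: without it the direct application of \Cref{exactmz} would give $|\mathcal D(I)|+1$ nodes, one more than what the statement demands, and exploiting $\mathds 1_{\mathbb T^d}\in V(I)$ (after the initial shift) is precisely what saves that extra node.
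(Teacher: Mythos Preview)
Your proof is correct and follows essentially the same route as the paper: compute $\dim(\Span\{f\overline g:f,g\in V\})=|\mathcal D(I)|$, then use the conic (rather than convex) version of Carath\'eodory to get $N=|\mathcal D(I)|$ nodes, and finally recover $\sum_i w_i=1$. The only difference is that you first shift $I$ to force $\mathds 1_{\mathbb T^d}\in V(I)$ so as to invoke \Cref{rem:constant_in_Vn}, whereas the paper skips this step and simply tests \eqref{torus_mz_inequality} with any monomial $f(\bm x)=\exp(2\pi\mathrm i\langle\bm k,\bm x\rangle)\in V(I)$, which already has $|f|\equiv 1$ and hence forces $\sum_i w_i=1$ directly; your shift is therefore harmless but unnecessary.
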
 

\begin{proof} The statement with $N+1$ instead of $N$ follows immediately from \Cref{exactmz} by the fact, that $\dim(\Span\{f\cdot \overline{g} : f,g\in V\}) = |\mathcal{D}(I)| \leq |I|^2 -|I|+1$. In order to reduce further to $N = |\mathcal{D(I)}|$ points we return to the characterization in \Cref{discr_orth}. Recall, that for the reduction to $N+1$ we used Caratheodory's theorem in \Cref{CarSub}. This time we interprete the convex combination of the identity matrix $\bm{I}_n \in \conv \mathcal{M}$ as a conical combination (just non-negative coefficients which do not necessarily sum up to one). With \Cref{CarSub}, (i) we obtain \eqref{torus_mz_inequality} initially with $N$ non-negative weights $w_i$. However, since the space $V(I)$ contains complex monomials with modulus one everywhere we obtain $w_1+...+w_N = 1$ by testing \eqref{torus_mz_inequality} with such a monomial.
\end{proof} 

Next, we compare \Cref{exactmz_torus} to existing results in the torus setting. Let us emphasize that existing exact MZ-discretization results need additional restrictions on the index set. This is not the case in \Cref{exactmz_torus}. However, existing results use equal weights. This problem turns out to be much harder.   

\begin{itemize}
\item \emph{Equidistant points.}
    It is known that if $\sqrt[d]{n} \in \mathds{N}$ and $I\subset \{ -\sqrt[d]{n}/2 , \dots, \sqrt[d]{n}/2 -1\}^d$, then the grid  $\bm{G}$
    of equidistant points $\bm{X} := \{\bm{k}  / \sqrt[d]{n}\colon  \bm{k}\in \{ 1,\dots,  \sqrt[d]{n}\}^d\}$ with equal weights $1/n$ satisfies the exact Marcinkiewicz-Zygmund inequality, i.e.,
    for all $f\in V$
    it holds
    \begin{equation}\label{rank1}
        \int_{\mathbb{T}^d} |f(\bm{x})|^2  \;\mathrm d \bm{x} = 
        \frac{1}{n}\sum_{\bm{x}\in \bm{X}}  |f(\bm{x})|^2 \,.
    \end{equation}
    
    Equidistant points are an example of an exact $L_2$-MZ inequality with equal weights. If $I$ is small compared to the frequency cube $\{ -\sqrt[d]{n}/2 , \dots, \sqrt[d]{n}/2 -1\}^d$ this setting is not very useful in high dimensions $d$. However, if $I$ denotes the entire frequency cube, no oversampling is needed, i.e., $|I| = |\bm{G}| = n$.
    
\item \emph{Rank-1 lattices \cite{Kammerer_Potts_Volkmer_2015} and quadratic oversampling.}
    For a set $I\subset \mathds{Z}^d$, 
    \begin{equation}\max_{j=1,\dots,d}\max_{\bm{h},\bm{k}\in I}|h_j-k_j|<|I|,\label{eq:R1L_add_restr}
    \end{equation}
    there is some $M\in \mathds{N}$ satisfying $|I|\le M \le |\mathcal{D}(I)| \le |I|^2-|I|+1$ such that one can construct a rank-1 lattice
    $\bm{X} := \{\frac{i}{M}\bm{z} \operatorname{mod} \mathds 1\colon  i=0, \dots, M-1\}$ such that for all $f\in V(I)$ \eqref{rank1} holds. Moreover, such a rank-1 lattice can be efficiently determined using a component-by-component approach, cf.~\cite{KMNN21}.
    
    It is worth mentioning that rank-1 lattices  have a similar quadratic oversampling as in \Cref{exactmz_torus}. In \cite{Te17} it is claimed that the additional restriction \eqref{eq:R1L_add_restr} on the index set can be dropped. In fact, it is claimed that a Korobov lattice of size $M \leq 2d|\mathcal{D}(I)|$ always exists, such that \eqref{rank1} is true for all $f \in V(I)$. A closer inspection of the proof in \cite[Sect.\ 4]{Te17}, however, shows that an additional assumption similar to \eqref{eq:R1L_add_restr} is used in the proof.  
    \begin{theorem}\label{thm:rationals_aliasing}
         Let $M \in \mathds{N}$, $\bm{R}_K = \{0,1/K,...,(K-1)/K\}$ the full one-dimensional grid with $K$ equidistant points and $\bm{R}^{M,d}=\left(\bigcup_{K=1}^M \bm{R}_K\right)^d$ a $d$-dimensional rectilinear grid. Then there always exist index sets $I \subset \mathds{Z}^d$ with two elements such that for all $\bm{X} \subset \bm{R}^{M,d}$ the exact MZ inequality \eqref{torus_mz_inequality} is violated.
    \end{theorem}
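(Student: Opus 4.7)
The plan is to construct a two-element index set $I = \{\bm 0, \bm m\}$ whose nonzero frequency $\bm m$ is completely aliased to the constant function on $\bm R^{M,d}$. Then the sampled space collapses to dimension one, but the continuous $L_2$-norm still sees two independent functions, forcing any candidate MZ identity to contradict itself.

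The key arithmetic observation is that every coordinate of every $\bm x \in \bm R^{M,d}$ is a rational with denominator at most $M$. So I would set $L := \operatorname{lcm}(1, 2, \dots, M)$ and choose $\bm m := (L, 0, \dots, 0)\in\mathds{Z}^d$. For any $\bm x \in \bm R^{M,d}$ one has $x_1 = a/K$ with $K\in\{1,\dots,M\}$ and $K \mid L$, so $Lx_1\in\mathds{Z}$ and hence
\begin{equation*}
    e^{2\pi\mathrm{i}\langle\bm m,\bm x\rangle} = e^{2\pi\mathrm{i} L x_1} = 1 \quad\text{for every } \bm x\in\bm R^{M,d}.
\end{equation*}

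Now take $I := \{\bm 0, \bm m\}$, so that $V(I) = \operatorname{span}\{1,\, e^{2\pi\mathrm{i}\langle\bm m,\cdot\rangle}\}$, and argue by contradiction: assume that some finite $\bm X = \{\bm x^1,\dots,\bm x^N\}\subset\bm R^{M,d}$ together with non-negative weights $w_1,\dots,w_N$ satisfies \eqref{torus_mz_inequality} for all $f\in V(I)$. Testing with $f\equiv 1$ forces $\sum_i w_i = 1$. Testing with $f(\bm x) := 1 + e^{2\pi\mathrm{i}\langle\bm m,\bm x\rangle}$ I compute $\int_{\mathbb{T}^d}|f|^2\,\mathrm d\bm x = 2$ (the cross terms vanish because $\bm m \neq \bm 0$), while by the aliasing observation $|f(\bm x^i)|^2 = |1+1|^2 = 4$ for every $i$, so $\sum_i w_i |f(\bm x^i)|^2 = 4 \neq 2$, a contradiction.

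There is no serious obstacle here; the only substantive ingredient is recognising that $\operatorname{lcm}(1,\dots,M)$ simultaneously kills every admissible denominator appearing in the rectilinear grid, after which the two-point index set breaks exactness via a single pair of test functions. The argument is purely deterministic and independent of the weights, which is what distinguishes it from equally-weighted lattice statements such as \cite[Thm.~4.1]{Te17}.
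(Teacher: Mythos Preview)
Your argument is correct and follows essentially the same route as the paper: both exploit that every coordinate of $\bm R^{M,d}$ has denominator dividing a common multiple (you use $\operatorname{lcm}(1,\dots,M)$, the paper uses $\prod_{k=1}^M k$), so a suitably chosen frequency aliases to the constant. The only cosmetic difference is the test function---the paper takes the \emph{difference} of the two exponentials, which vanishes identically on $\bm R^{M,d}$ and yields the contradiction in one step without first pinning down $\sum_i w_i$, whereas you take the sum and test twice.
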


    \begin{proof}
        We simply specify a range of sets that fulfills the assertion. Let
       \begin{equation*}
           I_{\bm a,\bm b}:=\left\{\bm a, \bm a+\left(\prod_{k=1}^{M}k\right)\bm{b}\right\}, \qquad \bm a\in\mathds{Z}^d, \bm b\in\mathds{Z}^d\setminus\{\bm 0\},
       \end{equation*}
       Clearly, for any $\bm x\in \bm{R}^{M,d}$ and $j=1,\dots,d$, there exist $K_j\in\mathds{N}\cap[0,M]$ and $l_j\in\mathds{N}\cap[0,K_j-1]$ such that $x_j=l_j/K_j$, which yields
       \begin{align*}
            \exp \left(2\pi \mathrm{i} \langle\bm a, \bm x \rangle\right)
            &=\prod_{j=1}^d\exp \left(2\pi \mathrm{i}\, a_j x_j\right)
            =\prod_{j=1}^d\exp \left(2\pi \mathrm{i}\, a_j \frac{l_j}{K_j}\right).
        \intertext{Due to the fact that $\frac{l_j}{K_j}\left(\prod_{k=1}^{M}k\right)b_j\in\mathds{N}_0$ for each $j$, we observe}
        \exp \left(2\pi \mathrm{i} \langle\bm a, \bm x \rangle\right)&=
        \prod_{j=1}^d\left(\exp \left(2\pi \mathrm{i}\, a_j \frac{l_j}{K_j}\right)\exp \left(2\pi \mathrm{i}\, \frac{l_j}{K_j}\left(\prod_{k=1}^{M}k\right)b_j \right)\right)
        \\&=
        \exp \left(2\pi \mathrm{i} \left\langle\bm a+\left(\prod_{k=1}^{M}k\right)\bm b, \bm x \right\rangle\right).
       \end{align*}
    
       As a consequence, for the function $f\colon\mathbb{T}^d\to \mathds{C}$, $\bm x\mapsto \exp \left(2\pi \mathrm{i} \langle\bm a, \bm x\rangle\right)-\exp \left(2\pi \mathrm{i} \langle\bm a', \bm x\rangle\right)$, $\bm a'=\bm a+\left(\prod_{k=1}^{M}k\right)\bm{b}$ ,we observe $f\in V(I_{\bm a,\bm b})$ and that the right hand side of \eqref{torus_mz_inequality} is zero independent of $\bm{X}\subset \bm{R}^{M,d}$, while the left hand side is not.
    \end{proof}
    
    \begin{corollary}\label{cor:lattice_aliasing}
        For any $n \in \mathds{N}$, $n\ge 2$, and $M \in \mathds{N}$ there are index sets $I\subset\mathds{Z}^d$, $|I| = n$, such that \eqref{torus_mz_inequality} is violated for any lattice rule with not more than M elements.   
    \end{corollary}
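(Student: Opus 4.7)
The plan is to combine \Cref{thm:rationals_aliasing} with two elementary observations: every lattice rule with few points has rational nodes of bounded denominator, and enlarging the index set only enlarges the trigonometric space $V(I)$ without destroying the pathological element produced there. So I would take the two-element bad set supplied by \Cref{thm:rationals_aliasing} and simply pad it to cardinality $n$ with arbitrary extra frequencies.

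First, I would verify that every lattice rule on $\mathbb{T}^d$ with at most $M$ integration nodes has all its nodes contained in $\bm{R}^{M,d}$. Indeed, a lattice rule of rank $r$ with invariants $n_1\mid n_2\mid\cdots\mid n_r$ has total cardinality $\prod_{j=1}^{r}n_j\le M$, and every coordinate of every node is a rational of the form $k/n_r$; since $n_r$ divides $\prod_{j}n_j\le M$ we have $n_r\le M$, and hence each coordinate lies in $\bm{R}_{n_r}\subseteq\bigcup_{K=1}^{M}\bm{R}_K$. This reduces the claim to exhibiting a function $f\in V(I)$ that vanishes identically on $\bm{R}^{M,d}$ while having positive $L_2$-norm.

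For the construction, pick any $\bm a\in\mathds{Z}^d$ and $\bm b\in\mathds{Z}^d\setminus\{\bm 0\}$, set $\bm a':=\bm a+\bigl(\prod_{k=1}^{M}k\bigr)\bm b$, and choose arbitrary distinct $\bm c_3,\ldots,\bm c_n\in\mathds{Z}^d\setminus\{\bm a,\bm a'\}$, defining
\begin{equation*}
    I \;:=\; \{\bm a,\bm a',\bm c_3,\ldots,\bm c_n\}\subset\mathds{Z}^d,\qquad |I|=n\,.
\end{equation*}
The witness function $f(\bm x):=\exp(2\pi\mathrm{i}\langle\bm a,\bm x\rangle)-\exp(2\pi\mathrm{i}\langle\bm a',\bm x\rangle)$ lies in $V(I)$. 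By the aliasing identity from the proof of \Cref{thm:rationals_aliasing}, $f\equiv 0$ on $\bm{R}^{M,d}$, so the quadrature sum $\sum_{i}w_i|f(\bm x^i)|^2$ of any lattice rule with at most $M$ nodes evaluates to zero, whereas orthogonality of the two distinct trigonometric monomials gives $\int_{\mathbb{T}^d}|f(\bm x)|^2\,\mathrm d\bm x=2$. Hence \eqref{torus_mz_inequality} fails.

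The only mildly subtle point is the first step, i.e., being precise about what ``lattice rule with at most $M$ elements'' means and tracking the denominators of the coordinates of its nodes. Once that is in place, the rest is just inserting $\bm a$ and $\bm a'$ into a set of the prescribed cardinality and reading off the failure of \eqref{torus_mz_inequality} from \Cref{thm:rationals_aliasing}.
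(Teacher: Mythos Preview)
Your proposal is correct and follows essentially the same route as the paper: show that every lattice rule with at most $M$ nodes has all its coordinates rational with denominator at most $M$ (the paper cites the canonical form from \cite{SlLy89}, you use the equivalent invariant-factor description), then invoke \Cref{thm:rationals_aliasing} to obtain the two-element bad set and pad it with arbitrary extra frequencies to reach cardinality $n$.
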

    \begin{proof}
        First we assume the lattice rule being a rank-$m$ lattice rule with $M'\le M$ sampling nodes $\bm{X}$. Moreover, we assume that an expression of the lattice rule is given in canonical form, cf.\ \cite[Theorem 4.5]{SlLy89}, which implies that each used sampling node $\bm x\in \bm{X}$ can be written as
        $$
        \bm x=\frac{j_1\bm z_1}{M_1'}+\cdots+\frac{j_m\bm z_m}{M_m'}\operatorname{mod} \mathds 1\,,
        $$
        where $j_1,\dots,j_m\in\mathds{N}_0$, $\bm z_1,\dots,\bm z_m\in\mathds{Z}^d$, and $M'=\prod_{j=1}^mM_j'$. Obviously, each component of each sampling node $\bm x\in \bm{X}\subset [0,1)^d$ can be written as a rational with denominator at most $M'$, which means that $\bm{X}\subset \bm{R}^{M',d}\subset \bm{R}^{M,d}$ and we can apply \Cref{thm:rationals_aliasing} and subsequently add arbitrary indices from $\mathds{Z}^d$ to $I$ until $|I| = n$ is reached.
    \end{proof}
    
    \begin{remark} The index sets and fooling functions constructed in the proofs of \Cref{thm:rationals_aliasing} and \Cref{cor:lattice_aliasing} can also be applied to \eqref{eq:Lp_MZ_discretization} with $\Omega=\mathbb{T}^d$, $V_n=V(I)$, and $d\mu(x)$ the normalized Lebesgue measure.
        Consequently, sampling sets consisting of rational sampling nodes with bounded denominators do not provide exact $L_p$-MZ discretizations for arbitrary spans $V(I)$, $|I|<n$, of trigonometric monomials. In particular, lattice rules of bounded cardinality cannot fulfill this characteristic either.
    \end{remark} 

\item \emph{Random points \cite{Tropp_2011} and logarithmic oversampling.}
    For any $\varepsilon >0$ there exist $M \leq \frac{6}{\varepsilon^2} |I| \log |I|$ uniformly drawn points
    $\bm{x}^1, \dots, \bm{x}^M$
    such that with high probability it holds for all $f\in V$ the tight bound
    \begin{equation}\label{eq102}
        (1-\varepsilon) \int_{\mathbb{T}^d} |f(\bm{x})|^2  \;\mathrm d \bm{x}  \leq 
        \frac{1}{M} \sum_{i=1}^{M}  |f(\bm{x}^i)|^2 \leq 
        (1+\varepsilon) \int_{\mathbb{T}^d} |f(\bm{x})|^2  \;\mathrm d \bm{x}\, .
    \end{equation}

    Random points show the existence of $L_2$-MZ inequalities with logarithmic instead of quadratic oversampling but loose the exactness condition.

\item \emph{Constant oversampling \cite{BaSpSr12},\cite{Te17},\cite{BSU23}.}
    Let $b>1$ and $M= \lceil b |I| \rceil$. Then there exist subsampled random points
    $\bm{x}^1,\dots, \bm{x}^{M} \in \mathbb{T}^d$ and positive weights $w_1, \dots, w_M$
    such that
it holds for all $f\in V$ that
    \begin{equation*}
        \int_{\mathbb{T}^d} |f(\bm{x})|^2  \;\mathrm d \bm{x}  \leq 
        \sum_{i=1}^{M} w_i  |f(\bm{x}^i)|^2 \leq 
        \left(\frac{\sqrt{b}+1}{\sqrt{b}-1}\right)^2 \int_{\mathbb{T}^d} |f(\bm{x})|^2 \;\mathrm d \bm{x}\, .
    \end{equation*}
    Here the question arises if constant weights can be used here. Based on the groundbreaking works by Marcus, Spielman, Srivastava \cite{MaSpSr15} and Nitzan, Olevskii, Ulanovskii \cite{NiOlUl16} one should mention the following recent result by Kosov \cite[Cor.\ 1.3]{Ko22}. If $\varepsilon>0$ then there is a set of $M$ points $\bm{x}^1,...,\bm{x}^M \in \mathbb{T}^d$ with $M\leq 10^5\varepsilon^{-2}|I|$ such that \eqref{eq102} holds.  All these  results show discretization inequalities for the square norm with merely linear oversampling. The last one even gives a tight bound with $\varepsilon$-distortion. 
\end{itemize}

\subsection{The $d$-sphere} Let $\mathds S^d = \{\bm x\in\mathds R^{d+1} : \|\bm x\| = 1\}$ be the unit sphere with dimension $d$ and let $\mu(\bm x)$ be the normalized surface measure.
We have a look at the space of polynomials of degree at most $m$ restricted to the sphere
\begin{equation*}
    \Pi_m
    \coloneqq \Span\Big\{ f:\mathds S^d \to \mathds R ~:~ f(\bm x) = \bm{x}^{\bm{k}} = x_1^{k_1}\cdots x_d^{k_d}, \bm k\in \mathds N_0^d,\, \|\bm{k} \|_1 \le m \Big\} \,.
\end{equation*}
These are commonly used in approximation on the sphere as they can be represented by spherical harmonics allowing for applying analytic tools and fast algorithms, cf.\ \cite{DX13}.
For the dimension of this space, we have by \cite[Corollary~1.1.5]{DX13}
\begin{equation*}
    \dim \Pi_m
    = \binom{m+d}{d} + \binom{m+d-1}{d}
    \le \Big(\frac{9m}{d}\Big)^d \,,
\end{equation*}
where $d\le m$ and $\binom{n}{k} \le (n\cdot e/k)^k$ was used in the last inequality.

\begin{theorem}\label{exactmz_sphere} Let $m\ge d$, $p\in\mathds N$ even.
    There exist points $\bm x^1,\dots,\bm x^N$ in $\mathds{S}^d$ and positive weights $w_1+\dots+w_N=1$ with $N\le \dim(\Pi_{pm}) \le (9pm/d)^d$ such that
    \begin{equation*}
        \int_{\mathds{S}^d} |f(\bm x)|^p \;\mathrm d\mu(\bm x) = \sum_{i=1}^N w_i |f(\bm x^i)|^p
        \quad\text{for all}\quad
        f\in \Pi_m \,.
    \end{equation*}
\end{theorem}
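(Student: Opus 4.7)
The plan is to reduce this to the $L_2$ case already handled by \Cref{exactmz}, exploiting that $p$ is even. Specifically, since $p$ is even the integer $q:=p/2$ is well defined, and for any real-valued $f\in\Pi_m$ the restriction to $\mathds S^d$ satisfies $|f|^p=(f^q)^2=|f^q|^2$ with $f^q\in\Pi_{qm}=\Pi_{pm/2}$. Thus it suffices to produce exact $L_2$-MZ points and weights for the auxiliary space
\begin{equation*}
    W := \Pi_{pm/2} \subset C(\mathds S^d) \,,
\end{equation*}
and then apply the resulting identity to $g=f^q$.

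Concretely, I would first apply \Cref{exactmz} to $W$ with the normalized surface measure $\mu$ on the compact topological space $\Omega=\mathds S^d$. This yields points $\bm x^1,\dots,\bm x^{N'+1}\in\mathds S^d$ and non-negative weights $w_1,\dots,w_{N'+1}$ summing to one such that
\begin{equation*}
    \int_{\mathds S^d} |g(\bm x)|^2 \,\mathrm d\mu(\bm x) \;=\; \sum_{i=1}^{N'+1} w_i |g(\bm x^i)|^2 \quad\text{for all } g\in W,
\end{equation*}
where by the bound in \Cref{exactmz} we may take
\begin{equation*}
    N' \;\le\; \dim\bigl(\Span\{g\cdot\overline h : g,h\in W\}\bigr) \;\le\; \dim(\Pi_{pm}),
\end{equation*}
using that products of two polynomials of degree at most $pm/2$ restricted to $\mathds S^d$ lie in $\Pi_{pm}$.

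Next, for any $f\in\Pi_m$ I substitute $g=f^q\in W$ to obtain
\begin{equation*}
    \int_{\mathds S^d} |f(\bm x)|^p \,\mathrm d\mu(\bm x) \;=\; \int_{\mathds S^d} |f^q(\bm x)|^2 \,\mathrm d\mu(\bm x) \;=\; \sum_{i=1}^{N'+1} w_i\, |f(\bm x^i)|^p,
\end{equation*}
which is the desired identity. To meet the stated bound $N\le\dim(\Pi_{pm})$ and positivity of weights, I would discard all nodes with $w_i=0$ and reindex; the remaining number $N$ of nodes is at most $\dim(\Pi_{pm})\le (9pm/d)^d$ (the latter estimate reusing the explicit combinatorial bound already recorded just before the theorem). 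The only mildly delicate point is keeping the counting tight (the raw application of \Cref{exactmz} naively gives $\dim(\Pi_{pm})+1$): this is absorbed because \Cref{exactmz} is based on the Caratheodory step in \Cref{CarSub}(ii), and after the substitution $g=f^q$ the constant function $\mathds 1$ lies in $W$, so one may invoke the conic variant of Caratheodory (\Cref{CarSub}(i)) as in the proof of \Cref{exactmz_torus} to shave off the $+1$. Apart from that bookkeeping, no further obstacle is expected.
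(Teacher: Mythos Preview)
Your proposal is correct and follows essentially the same route as the paper's proof: reduce to the $L_2$ case on $\Pi_{pm/2}$ via $|f|^p=|f^{p/2}|^2$, apply \Cref{exactmz}, bound $\dim(\Span\{g\overline h:g,h\in\Pi_{pm/2}\})$ by $\dim(\Pi_{pm})$, and shave off the extra $+1$ using the conic Carath\'eodory argument from \Cref{exactmz_torus} since $\mathds 1_{\mathds S^d}\in\Pi_{pm/2}$. The paper additionally spells out $W_m$ as the span of degree-$p/2$ monomials in a basis of $\Pi_m$ before identifying it with $\Pi_{pm/2}$, but this is cosmetic.
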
 

\begin{proof} The proof works analogous to \Cref{even_MZ}:
    Using a basis of $\Pi_m = \Span\{\varphi_1, \dots, \varphi_{\dim(\Pi_m)}\}$ we define the space
    \begin{equation*}
        W_m
        = \Span\Big\{ \varphi_1^{k_1} \cdots \varphi_{\dim(\Pi_m)}^{k_n} : k_1, \dots, k_n\in \mathds N_0,\, k_1+\dots+k_n = \frac{p}{2} \Big\} \,.
    \end{equation*}
    Because of the polynomial structure, we have $W_m = \Pi_{pm/2}$.
    It is left to apply \Cref{exactmz} to $\Pi_{pm/2}$ and use $\dim(\{f\cdot\overline g : f, g\in \Pi_{pm/2}\}) = \dim(\Pi_{pm}) \le (9pm/d)^d$.

    Because $\mathds 1_{\mathds S^d}\in\Pi_m$ for all $m$, the number of points can be reduced to $\dim(\Pi_{pm})$ instead of $\dim(\Pi_{pm})+1$ by the same reasoning as in \Cref{exactmz_torus} .
\end{proof} 

Next, we compare \Cref{exactmz_sphere} to existing results.

\begin{itemize}
\item
    For $\mathds S^2$ the assertion above gives the existence of an exact $L_2$-MZ inequality for $N\le (2m+1)^2$ points, which matches the lower bound of $N\ge m(m+1)/2$ in terms of the rate from \cite{DaPrTeTi19}.
\item
    Similar to the torus, one can draw uniform points $\{\bm x^1, \dots, \bm x^M\}\subset\mathds S^d$ with $M \ge C\dim(\Pi_m)\log(\dim\Pi_m) \varepsilon^{-2}$ at random to obtain an equal-weight, non-exact $L_2$-MZ inequality, i.e.,
    \begin{equation*}
        (1-\varepsilon)\|f\|_{L_2(\mu)}^{2}
        \le \frac{1}{M}\sum_{i=1}^{M}|f(\bm x^i)|^2
        \le (1+\varepsilon)\|f\|_{L_2(\mu)}^{2}
        \quad\text{for all}\quad
        f\in\Pi_m \,.
    \end{equation*}
    This result is stated in \cite[Theorem~3.2]{FiHiJaUl24}.
\item
    Furthermore, in \cite[Theorem~4.1]{FiHiJaUl24} the existence of $M$ points with $M \le C\dim(\Pi_m) \varepsilon^{-d}$ was shown which simultaneously fulfill an equal-weight, non-exact $L_p$-MZ inequality for $p\in [1,\infty]$.
\end{itemize}

Moving on to exact quadrature, we immediately obtain the following result.

\begin{theorem}\label{quadrature_sphere} Let $m\ge d$.
    There exist points $\bm x^1,\dots,\bm x^{N}$ in $\mathds{S}^d$ and positive weights $w_1+\dots+w_{N}=1$ with $N\le \dim(\Pi_{m}) \le (9m/d)^d$ such that
    \begin{equation*}
        \int_{\mathds{S}^d} f(\bm x) \;\mathrm d\mu(\bm x) = \sum_{i=1}^{N} w_i f(\bm x^i) 
        \quad\text{for all}\quad
        f\in \Pi_{m} \,.
    \end{equation*}
\end{theorem}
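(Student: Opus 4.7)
My plan is to follow a direct Tschakaloff-type Caratheodory argument, analogous to Step~2 in the proof of \Cref{quadrature_formula} but simpler because $\Pi_m$ consists of real-valued functions on $\mathds{S}^d$, which removes the complex-to-real splitting and its associated factor of two. In particular, I will not need the full strength of \Cref{exactmz_sphere}, since going through an $L_2$-MZ identity on $\Pi_{\lceil m/2\rceil}$ and polarizing would give a bound $\dim(\Pi_{2\lceil m/2\rceil})$, which is sharp only when $m$ is even.

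First I set $n := \dim(\Pi_m)$ and exploit the fact that $\mathds 1_{\mathds{S}^d}\in\Pi_m$ to choose a real basis $\varphi_1,\dots,\varphi_n$ of $\Pi_m$ with $\varphi_1\equiv 1$. I then consider the continuous map $\bm\varphi(\bm x) := (\varphi_1(\bm x),\dots,\varphi_n(\bm x))^\top$, whose image $K_0 := \bm\varphi(\mathds{S}^d)\subset\mathds{R}^n$ is compact and lies inside the affine hyperplane $H := \{\bm y\in\mathds{R}^n : y_1=1\}$. My next step is to argue that the moment vector $\bm b := \int_{\mathds{S}^d}\bm\varphi\,\mathrm d\mu$ lies in $\conv K_0$ by the standard half-space argument: for any affine inequality $\ell(\bm y)\le c$ valid on $K_0$, integrating against the probability measure $\mu$ yields $\ell(\bm b)\le c$, and $\conv K_0$ (which is compact, hence closed) is the intersection of all such half-spaces. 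Moreover, $\varphi_1\equiv 1$ gives $b_1 = 1$, so $\bm b\in H$.

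Finally, I invoke Caratheodory's theorem inside the $(n-1)$-dimensional affine hyperplane $H$, obtaining a representation $\bm b = \sum_{i=1}^{N}w_i\bm\varphi(\bm x^i)$ with $N\le n$, $w_i\ge 0$, and $\sum_{i=1}^N w_i=1$ (the sum-to-one is forced by reading off the first coordinate). Reading this componentwise and using linearity yields $\int_{\mathds{S}^d}f\,\mathrm d\mu = \sum_i w_i f(\bm x^i)$ for every $f\in\Pi_m$; discarding zero-weight points leaves strictly positive weights. The dimension bound $\dim(\Pi_m)\le(9m/d)^d$ under $m\ge d$ is the one recalled just above the theorem. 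The only mildly subtle step is exploiting $\varphi_1\equiv 1$ to confine the Caratheodory reduction to the hyperplane $H$ of dimension $n-1$; this saves exactly one point relative to the naive bound $n+1$ and makes $N\le\dim(\Pi_m)$ sharp. No deeper obstacle arises.
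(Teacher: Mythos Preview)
Your argument is correct. The half-space argument showing $\bm b\in\conv K_0$ is standard, and applying Caratheodory inside the affine hyperplane $H=\{y_1=1\}$ (of dimension $n-1$) legitimately yields $N\le n=\dim(\Pi_m)$ with the weights automatically summing to one.

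The route differs from the paper's. The paper first manufactures a finite quadrature rule with possibly many nodes by going through its $D$-optimal design machinery (Corollary~4.2, hence Theorem~4.1 and Proposition~3.1), and only then subsamples; you instead bypass Step~1 entirely and argue directly that the moment vector lies in $\conv\bm\varphi(\mathds{S}^d)$ via separation. For the final reduction the paper invokes the \emph{conical} Caratheodory variant (Proposition~2.5(i)) in $\mathds{R}^n$ and recovers $\sum_i w_i=1$ afterwards by testing with $\mathds 1_{\mathds{S}^d}$, whereas you use the \emph{convex} Caratheodory theorem inside $H$; both yield the same bound $N\le\dim(\Pi_m)$ for the same underlying reason ($\mathds 1_{\mathds{S}^d}\in\Pi_m$). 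Your approach is the classical, self-contained Tschakaloff argument and is shorter here; the paper's approach stays within its constructive framework, so the resulting nodes arise as a subset of the points produced by the $D$-optimal design, which is the point of the paper.
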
 

\begin{proof} The proof works analogous to \Cref{quadrature_formula}.
    But since we have real-valued basis functions and $\mathds 1_{\mathds S^d}\in\Pi_m$ for all $m$, we use the conical variant of Carathéodory's Theorem
    (as in \Cref{exactmz_torus}) in order to obtain $\dim(\Pi_{m})$ points instead of $2\dim(\Pi_{m})+1$.
\end{proof} 

Quadrature formulas of the above type with equal weights are called $t$-designs introduced in \cite{Delsarte_Goethals_Seidel1977} (or $m$-design according to our notation, where $m$ is the degree of the polynomial). The equal weight condition makes this problem much harder and only a limited number of constructions of spherical designs are known.
However, there are also approaches to obtain spherical designs being exact up to machine precision numerically, cf.\ \cite{Womersley18, GP11}. Let us also mention \cite{ACSW10}, where the authors use a similar technique of maximizing a certain Gram determinant (for the case $d=2$) in order to computationally construct well-conditioned spherical design with a number of points $M\geq (m+1)^2$ points. In general the existence of spherical designs is known with the optimal asymptotic rate $M\ge C_d m^d$, 
cf.\ \cite[Thm.\ 1]{BRV13}.
This matches the number of points of our weighted result in \Cref{quadrature_sphere}. \section{Numerical experiments}\label{sec:numerics} 

In this section we test \Cref{algo1} numerically in order to obtain points and weights forming an exact $L_2$-MZ inequality.
For that let us assume a product type basis $\varphi_{\bm k}(\bm x) = \varphi_{k_1}(x_1) \cdots \varphi_{k_d}(x_d)$ and, as before, $\bm \varphi(x) = (\varphi_{\bm k}(\bm x))_{\bm k\in I}^{\top}$ for some multi-index set $I$.
For the minimization process we use the Broyden–Fletcher–Goldfarb–Shanno (BFGS) algorithm, which is an iterative quasi-Newton optimizer for unconstrained non-linear optimization problems, in order to update the points and weights in an alternating fashion.
It uses the objective function
\begin{align*}
    f(\bm x^1, \dots, \bm x^{N+1}, \alpha_1, \dots, \alpha_{N+1})
    =
    \Big\| \sum_{i=1}^{N+1} \alpha_i \bm \varphi(\bm x^i)\cdot \bm \varphi(\bm x^i)^\ast
    -\bm I_m \Big\|_F^2
    = \sum_{\bm k,\bm l\in I} \Big| \sum_{i=1}^{N+1} \alpha_i \varphi_{\bm k}(\bm x^i) \overline{\varphi_{\bm l}(\bm x^i)} - \delta_{\bm k\bm l} \Big|^2 \,.
\end{align*}
and the partial derivatives:
noting that for any complex numbers $z_1$ and $z_2$ it holds $\Re (z_1)\Re (z_2) + \Im (z_1)\Im (z_2) = \Re (z_1 \overline{z_2})$, they evaluate to
\begin{align*}
    \frac{\partial f}{\partial x_j^{i^\prime}}
    &= \sum_{\bm k,\bm l\in I}
    2\Re\Big( \sum_{i=1}^{N+1} \alpha_{i} \varphi_{\bm k}(\bm x^{i})\overline{\varphi_{\bm l}(\bm x^{i})} - \delta_{\bm k, \bm l} \Big)
    \Re\Big(\alpha_{i^\prime}  \varphi_{\bm k}(\bm x^{i^\prime}) \Big(\frac{\partial}{\partial x_j^{i^\prime}}\overline{\varphi_{ \bm l}(\bm x^{i^\prime})}\Big) + \alpha_{i^\prime} \Big(\frac{\partial}{\partial x_j^{i^\prime}}\varphi_{\bm k}(\bm x^{i^\prime})\Big) \overline{\varphi_{ \bm l}(\bm x^{i^\prime})} \Big) \\
    &\quad +2\Im\Big( \sum_{i=1}^{N+1} \alpha_{i}\varphi_{\bm k}(\bm x^{i})\overline{\varphi_{\bm l}(\bm x^{i})} - \delta_{\bm k, \bm l} \Big)
    \Im\Big( \alpha_{i^\prime} \varphi_{\bm k}(\bm x^{i^\prime}) \Big(\frac{\partial}{\partial x_j^{i^\prime}}\overline{\varphi_{ \bm l}(\bm x^{i^\prime})}\Big) + \alpha_{i^\prime} \Big(\frac{\partial}{\partial x_j^{i^\prime}}\varphi_{\bm k}(\bm x^{i^\prime})\Big) \overline{\varphi_{ \bm l}(\bm x^{i^\prime})} \Big) \\
    &= 4\alpha_{i^\prime} \Re\Big(
    \sum_{\bm k,\bm l\in I} \overline{\varphi_{\bm k}(\bm x^{i^\prime})} \Big( \sum_{i=1}^{N+1} \alpha_{i} \varphi_{\bm k}(\bm x^{i})\overline{\varphi_{\bm l}(\bm x^{i})} - \delta_{\bm k \bm l}\Big) \Big( \frac{\partial}{\partial x_j^{i^\prime}} \varphi_{\bm l}(\bm x^{i^\prime}) \Big)
    \Big) \\
    &= 4\alpha_{i^\prime} \Re \Big( \bm \varphi(\bm x^{i^\prime})^\ast \Big( \sum_{i=1}^{N+1} \alpha_{i} \bm \varphi(\bm x^{i})\cdot \bm \varphi(\bm x^{i})^\ast - \bm I_m \Big) \frac{\partial}{\partial x_j^{i^\prime}} \bm \varphi(\bm x^{i^\prime}) \Big)
\end{align*}
and
\begin{equation*}
    \frac{\partial f}{\partial \alpha_{i^\prime}}
    = 2 \Re \Big( \bm \varphi(\bm x^{i^\prime})^\ast \Big( \sum_{i=1}^{N+1} \alpha_{i} \bm \varphi(\bm x^{i})\cdot \bm \varphi(\bm x^{i})^\ast - \bm I_m \Big) \bm \varphi(\bm x^{i^\prime}) \Big) \,.
\end{equation*}

In our experiments, we use trigonometric polynomials $\varphi_{\bm k}(\bm x) = \exp(2\pi\mathrm i \langle\bm k, \bm x\rangle)$ for different frequency index sets $I\in\mathds Z^d$ and dimensions $d$.
The initial draw of points is made randomly and the weights are set to be equal.
The optimization procedure is done several times for different initial draws to counteract a bad draw of points.

\paragraph{Experiment~1} 

For the first experiments we use dimension $d=2$ and three different frequency index sets:
An $\ell_1$-ball with $|I_1|=41$ frequencies, a hyperbolic cross with $|I_2|=33$ frequencies, i.e.,
\begin{equation*}
    I_1
    = \Big\{\bm k\in\mathds Z^2 : \|\bm k\|_1 \le 4 \Big\}
    \quad\text{and}\quad
    I_2
    = \Big\{\bm k\in\mathds Z^2 : \prod_{j=1}^{2} (|k_j|+1) \le 6 \Big\}
\end{equation*}
and the third choice is based on the argumentations leading to \Cref{cor:lattice_aliasing} with $|I_3| = 10$, where we know that the minimal lattice size is $113$ for exact reconstruction and our theoretical bound is $\dim \{V\cdot\overline V\} = 91$:
\begin{align*}
    I_3 = \Big\{
    & \begin{pmatrix}         0 \\         0 \end{pmatrix},
    \begin{pmatrix}   2\,671\,704 \\   2\,671\,704 \end{pmatrix},
    \begin{pmatrix}  -3\,111\,990 \\   3\,111\,990 \end{pmatrix},
    \begin{pmatrix}  -4\,145\,974 \\  -4\,145\,974 \end{pmatrix},
    \begin{pmatrix}   4\,520\,742 \\  -4\,520\,742 \end{pmatrix}, \\
    & \begin{pmatrix}  -5\,553\,600 \\  -5\,553\,600 \end{pmatrix},
    \begin{pmatrix}  -6\,867\,835 \\   6\,867\,835 \end{pmatrix},
    \begin{pmatrix}  18\,119\,640 \\  18\,119\,640 \end{pmatrix}
    \begin{pmatrix}  39\,011\,940 \\ -39\,011\,940 \end{pmatrix},
    \begin{pmatrix} -39\,021\,892 \\  39\,021\,892 \end{pmatrix},
    \Big\} \,.
\end{align*}

As for the number of points, we seeked the smallest number such that we still have exact reconstruction up to a certain threshold, i.e., an $L_2$-MZ constant of $\|\sum_{i=1}^{N+1}\alpha_i \bm \varphi(\bm x^i)\cdot\bm \varphi(\bm x^i)^\ast - \bm I_m\|_{2\to 2} = \varepsilon < 10^{-13}$.

\begin{figure} \centering
    \begingroup
  \makeatletter
  \providecommand\color[2][]{\GenericError{(gnuplot) \space\space\space\@spaces}{Package color not loaded in conjunction with
      terminal option `colourtext'}{See the gnuplot documentation for explanation.}{Either use 'blacktext' in gnuplot or load the package
      color.sty in LaTeX.}\renewcommand\color[2][]{}}\providecommand\includegraphics[2][]{\GenericError{(gnuplot) \space\space\space\@spaces}{Package graphicx or graphics not loaded}{See the gnuplot documentation for explanation.}{The gnuplot epslatex terminal needs graphicx.sty or graphics.sty.}\renewcommand\includegraphics[2][]{}}\providecommand\rotatebox[2]{#2}\@ifundefined{ifGPcolor}{\newif\ifGPcolor
    \GPcolortrue
  }{}\@ifundefined{ifGPblacktext}{\newif\ifGPblacktext
    \GPblacktexttrue
  }{}\let\gplgaddtomacro\g@addto@macro
\gdef\gplbacktext{}\gdef\gplfronttext{}\makeatother
  \ifGPblacktext
\def\colorrgb#1{}\def\colorgray#1{}\else
\ifGPcolor
      \def\colorrgb#1{\color[rgb]{#1}}\def\colorgray#1{\color[gray]{#1}}\expandafter\def\csname LTw\endcsname{\color{white}}\expandafter\def\csname LTb\endcsname{\color{black}}\expandafter\def\csname LTa\endcsname{\color{black}}\expandafter\def\csname LT0\endcsname{\color[rgb]{1,0,0}}\expandafter\def\csname LT1\endcsname{\color[rgb]{0,1,0}}\expandafter\def\csname LT2\endcsname{\color[rgb]{0,0,1}}\expandafter\def\csname LT3\endcsname{\color[rgb]{1,0,1}}\expandafter\def\csname LT4\endcsname{\color[rgb]{0,1,1}}\expandafter\def\csname LT5\endcsname{\color[rgb]{1,1,0}}\expandafter\def\csname LT6\endcsname{\color[rgb]{0,0,0}}\expandafter\def\csname LT7\endcsname{\color[rgb]{1,0.3,0}}\expandafter\def\csname LT8\endcsname{\color[rgb]{0.5,0.5,0.5}}\else
\def\colorrgb#1{\color{black}}\def\colorgray#1{\color[gray]{#1}}\expandafter\def\csname LTw\endcsname{\color{white}}\expandafter\def\csname LTb\endcsname{\color{black}}\expandafter\def\csname LTa\endcsname{\color{black}}\expandafter\def\csname LT0\endcsname{\color{black}}\expandafter\def\csname LT1\endcsname{\color{black}}\expandafter\def\csname LT2\endcsname{\color{black}}\expandafter\def\csname LT3\endcsname{\color{black}}\expandafter\def\csname LT4\endcsname{\color{black}}\expandafter\def\csname LT5\endcsname{\color{black}}\expandafter\def\csname LT6\endcsname{\color{black}}\expandafter\def\csname LT7\endcsname{\color{black}}\expandafter\def\csname LT8\endcsname{\color{black}}\fi
  \fi
    \setlength{\unitlength}{0.0500bp}\ifx\gptboxheight\undefined \newlength{\gptboxheight}\newlength{\gptboxwidth}\newsavebox{\gptboxtext}\fi \setlength{\fboxrule}{0.5pt}\setlength{\fboxsep}{1pt}\definecolor{tbcol}{rgb}{1,1,1}\begin{picture}(7920.00,4520.00)\gplgaddtomacro\gplbacktext{\csname LTb\endcsname \put(489,2802){\makebox(0,0)[r]{\strut{}\scriptsize -4}}\csname LTb\endcsname \put(489,3375){\makebox(0,0)[r]{\strut{}\scriptsize 0}}\csname LTb\endcsname \put(489,3947){\makebox(0,0)[r]{\strut{}\scriptsize 4}}\csname LTb\endcsname \put(744,2454){\makebox(0,0){\strut{}\scriptsize -4}}\csname LTb\endcsname \put(1316,2454){\makebox(0,0){\strut{}\scriptsize 0}}\csname LTb\endcsname \put(1888,2454){\makebox(0,0){\strut{}\scriptsize 4}}}\gplgaddtomacro\gplfronttext{\csname LTb\endcsname \put(1316,4336){\makebox(0,0){\strut{}\scriptsize $\ell_1$-ball ($m = 41$)}}}\gplgaddtomacro\gplbacktext{\csname LTb\endcsname \put(3122,2778){\makebox(0,0)[r]{\strut{}\scriptsize -5}}\csname LTb\endcsname \put(3122,3375){\makebox(0,0)[r]{\strut{}\scriptsize 0}}\csname LTb\endcsname \put(3122,3971){\makebox(0,0)[r]{\strut{}\scriptsize 5}}\csname LTb\endcsname \put(3353,2454){\makebox(0,0){\strut{}\scriptsize -5}}\csname LTb\endcsname \put(3950,2454){\makebox(0,0){\strut{}\scriptsize 0}}\csname LTb\endcsname \put(4546,2454){\makebox(0,0){\strut{}\scriptsize 5}}}\gplgaddtomacro\gplfronttext{\csname LTb\endcsname \put(3949,4336){\makebox(0,0){\strut{}\scriptsize hyperbolic cross ($m = 33$)}}}\gplgaddtomacro\gplbacktext{\csname LTb\endcsname \put(5755,2802){\makebox(0,0)[r]{\strut{}\scriptsize $-4\cdot 10^7$}}\csname LTb\endcsname \put(5755,3375){\makebox(0,0)[r]{\strut{}\scriptsize $0$}}\csname LTb\endcsname \put(5755,3947){\makebox(0,0)[r]{\strut{}\scriptsize $4\cdot 10^7$}}\csname LTb\endcsname \put(6011,2454){\makebox(0,0){\strut{}\scriptsize $-4\cdot 10^7$}}\csname LTb\endcsname \put(6583,2454){\makebox(0,0){\strut{}\scriptsize $0$}}\csname LTb\endcsname \put(7155,2454){\makebox(0,0){\strut{}\scriptsize $4\cdot 10^7$}}}\gplgaddtomacro\gplfronttext{\csname LTb\endcsname \put(6583,4336){\makebox(0,0){\strut{}\scriptsize bad frequencies ($m = 10$)}}}\gplgaddtomacro\gplbacktext{\csname LTb\endcsname \put(448,410){\makebox(0,0)[r]{\strut{}\scriptsize 0}}\csname LTb\endcsname \put(448,1839){\makebox(0,0)[r]{\strut{}\scriptsize 1}}\csname LTb\endcsname \put(560,205){\makebox(0,0){\strut{}\scriptsize 0}}\csname LTb\endcsname \put(1989,205){\makebox(0,0){\strut{}\scriptsize 1}}}\gplgaddtomacro\gplfronttext{\csname LTb\endcsname \put(2208,410){\makebox(0,0)[l]{\strut{}\scriptsize 0.024}}\csname LTb\endcsname \put(2208,1839){\makebox(0,0)[l]{\strut{}\scriptsize 0.025}}\csname LTb\endcsname \put(1274,2085){\makebox(0,0){\strut{}\scriptsize $n=41, \varepsilon=1.60834\cdot 10^{-14}$}}}\gplgaddtomacro\gplbacktext{\csname LTb\endcsname \put(3081,410){\makebox(0,0)[r]{\strut{}\scriptsize 0}}\csname LTb\endcsname \put(3081,1839){\makebox(0,0)[r]{\strut{}\scriptsize 1}}\csname LTb\endcsname \put(3193,205){\makebox(0,0){\strut{}\scriptsize 0}}\csname LTb\endcsname \put(4622,205){\makebox(0,0){\strut{}\scriptsize 1}}}\gplgaddtomacro\gplfronttext{\csname LTb\endcsname \put(4841,410){\makebox(0,0)[l]{\strut{}\scriptsize 0.022}}\csname LTb\endcsname \put(4841,1839){\makebox(0,0)[l]{\strut{}\scriptsize 0.023}}\csname LTb\endcsname \put(3907,2085){\makebox(0,0){\strut{}\scriptsize $n=45, \varepsilon=1.08850\cdot 10^{-14}$}}}\gplgaddtomacro\gplbacktext{\csname LTb\endcsname \put(5714,410){\makebox(0,0)[r]{\strut{}\scriptsize 0}}\csname LTb\endcsname \put(5714,1839){\makebox(0,0)[r]{\strut{}\scriptsize 1}}\csname LTb\endcsname \put(5826,205){\makebox(0,0){\strut{}\scriptsize 0}}\csname LTb\endcsname \put(7255,205){\makebox(0,0){\strut{}\scriptsize 1}}}\gplgaddtomacro\gplfronttext{\csname LTb\endcsname \put(7474,410){\makebox(0,0)[l]{\strut{}\scriptsize 0.000}}\csname LTb\endcsname \put(7474,982){\makebox(0,0)[l]{\strut{}\scriptsize 0.010}}\csname LTb\endcsname \put(7474,1553){\makebox(0,0)[l]{\strut{}\scriptsize 0.020}}\csname LTb\endcsname \put(6541,2085){\makebox(0,0){\strut{}\scriptsize $n=91, \varepsilon=3.62941\cdot 10^{-16}$}}}\gplbacktext
    \put(0,0){\includegraphics[width={396.00bp},height={226.00bp}]{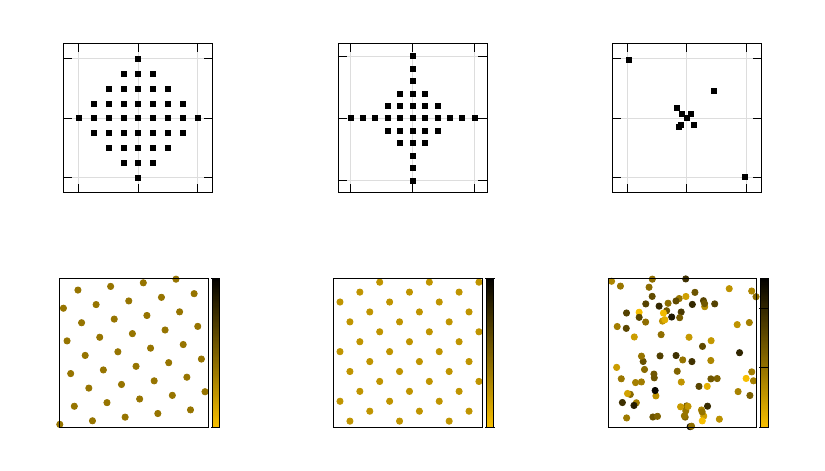}}\gplfronttext
  \end{picture}\endgroup
     \caption{Experiment~1: points and weights forming an exact $L_2$-MZ inequality for frequencies in an $\ell_1$-ball (left), a hyperbolic cross (middle), and $2$-dimensional bad frequencies (right).}\label{fig:points}
\end{figure} 

The outcome is depicted in \Cref{fig:points}.
For the $\ell_1$-ball and the hyperbolic cross we obtain very regular sets of sampling nodes with almost equal weights, reminiscent of shifted rank-1 lattices.
In particular, we achieve an oversampling factor of one for the $\ell_1$-ball, i.e., $n = |I_1| = 41$, where also a reconstructing lattice is known.
The third frequency set $I_3$ appears to be more challenging as we were not able to find points and weights for $n<91$, which suggests the sharpness of our results.
It is also an example of non-equal weights.
We further tried several strategies to obtain an equal-weight exact $L_2$-MZ inequality by only optimizing the points but did not succeed with our algorithm.

\paragraph{Experiment~2} 

Now we want to explore a possible dimension-dependence of the needed number of points for an exact $L_2$-MZ inequality.
We chose $20$ random frequencies in $\{-100, \dots, 100\}^d$ for $d=1,\dots, 7$.
We applied \Cref{algo1} varying the number of points $n\in\{20,\dots,300\}$ and computed the $L_2$-MZ constants of the results.
For each $n$, we started with $50$ different random initial draws of $n$ points and used the best result.
Here, also the weights are set equal to $\alpha_i = 1/n$ as the outcome did not differ compared to optimizing the weights as well.

\begin{figure} \centering
    \begingroup
  \makeatletter
  \providecommand\color[2][]{\GenericError{(gnuplot) \space\space\space\@spaces}{Package color not loaded in conjunction with
      terminal option `colourtext'}{See the gnuplot documentation for explanation.}{Either use 'blacktext' in gnuplot or load the package
      color.sty in LaTeX.}\renewcommand\color[2][]{}}\providecommand\includegraphics[2][]{\GenericError{(gnuplot) \space\space\space\@spaces}{Package graphicx or graphics not loaded}{See the gnuplot documentation for explanation.}{The gnuplot epslatex terminal needs graphicx.sty or graphics.sty.}\renewcommand\includegraphics[2][]{}}\providecommand\rotatebox[2]{#2}\@ifundefined{ifGPcolor}{\newif\ifGPcolor
    \GPcolortrue
  }{}\@ifundefined{ifGPblacktext}{\newif\ifGPblacktext
    \GPblacktexttrue
  }{}\let\gplgaddtomacro\g@addto@macro
\gdef\gplbacktext{}\gdef\gplfronttext{}\makeatother
  \ifGPblacktext
\def\colorrgb#1{}\def\colorgray#1{}\else
\ifGPcolor
      \def\colorrgb#1{\color[rgb]{#1}}\def\colorgray#1{\color[gray]{#1}}\expandafter\def\csname LTw\endcsname{\color{white}}\expandafter\def\csname LTb\endcsname{\color{black}}\expandafter\def\csname LTa\endcsname{\color{black}}\expandafter\def\csname LT0\endcsname{\color[rgb]{1,0,0}}\expandafter\def\csname LT1\endcsname{\color[rgb]{0,1,0}}\expandafter\def\csname LT2\endcsname{\color[rgb]{0,0,1}}\expandafter\def\csname LT3\endcsname{\color[rgb]{1,0,1}}\expandafter\def\csname LT4\endcsname{\color[rgb]{0,1,1}}\expandafter\def\csname LT5\endcsname{\color[rgb]{1,1,0}}\expandafter\def\csname LT6\endcsname{\color[rgb]{0,0,0}}\expandafter\def\csname LT7\endcsname{\color[rgb]{1,0.3,0}}\expandafter\def\csname LT8\endcsname{\color[rgb]{0.5,0.5,0.5}}\else
\def\colorrgb#1{\color{black}}\def\colorgray#1{\color[gray]{#1}}\expandafter\def\csname LTw\endcsname{\color{white}}\expandafter\def\csname LTb\endcsname{\color{black}}\expandafter\def\csname LTa\endcsname{\color{black}}\expandafter\def\csname LT0\endcsname{\color{black}}\expandafter\def\csname LT1\endcsname{\color{black}}\expandafter\def\csname LT2\endcsname{\color{black}}\expandafter\def\csname LT3\endcsname{\color{black}}\expandafter\def\csname LT4\endcsname{\color{black}}\expandafter\def\csname LT5\endcsname{\color{black}}\expandafter\def\csname LT6\endcsname{\color{black}}\expandafter\def\csname LT7\endcsname{\color{black}}\expandafter\def\csname LT8\endcsname{\color{black}}\fi
  \fi
    \setlength{\unitlength}{0.0500bp}\ifx\gptboxheight\undefined \newlength{\gptboxheight}\newlength{\gptboxwidth}\newsavebox{\gptboxtext}\fi \setlength{\fboxrule}{0.5pt}\setlength{\fboxsep}{1pt}\definecolor{tbcol}{rgb}{1,1,1}\begin{picture}(8220.00,3400.00)\gplgaddtomacro\gplbacktext{\csname LTb\endcsname \put(454,950){\makebox(0,0)[r]{\strut{}\scriptsize $10^{-12}$}}\csname LTb\endcsname \put(454,1835){\makebox(0,0)[r]{\strut{}\scriptsize $10^{-6}$}}\csname LTb\endcsname \put(454,2721){\makebox(0,0)[r]{\strut{}\scriptsize $10^{0}$}}\csname LTb\endcsname \put(566,450){\makebox(0,0){\strut{}\scriptsize 0}}\csname LTb\endcsname \put(1704,450){\makebox(0,0){\strut{}\scriptsize 100}}\csname LTb\endcsname \put(2843,450){\makebox(0,0){\strut{}\scriptsize 200}}\csname LTb\endcsname \put(3981,450){\makebox(0,0){\strut{}\scriptsize 300}}}\gplgaddtomacro\gplfronttext{\csname LTb\endcsname \put(2273,143){\makebox(0,0){\strut{}number of points}}\csname LTb\endcsname \put(2273,3072){\makebox(0,0){\strut{}random frequencies ($m=20$)}}}\gplgaddtomacro\gplbacktext{\csname LTb\endcsname \put(4554,1265){\makebox(0,0)[r]{\strut{}\scriptsize $10^{-12}$}}\csname LTb\endcsname \put(4554,1997){\makebox(0,0)[r]{\strut{}\scriptsize $10^{-6}$}}\csname LTb\endcsname \put(4554,2728){\makebox(0,0)[r]{\strut{}\scriptsize $10^{0}$}}\csname LTb\endcsname \put(4666,450){\makebox(0,0){\strut{}\scriptsize 0}}\csname LTb\endcsname \put(6374,450){\makebox(0,0){\strut{}\scriptsize 100}}\csname LTb\endcsname \put(8081,450){\makebox(0,0){\strut{}\scriptsize 200}}}\gplgaddtomacro\gplfronttext{\csname LTb\endcsname \put(6373,143){\makebox(0,0){\strut{}number of points}}\csname LTb\endcsname \put(6373,3072){\makebox(0,0){\strut{}$1$-dimensional bad frequencies ($m=10$)}}}\gplbacktext
    \put(0,0){\includegraphics[width={411.00bp},height={170.00bp}]{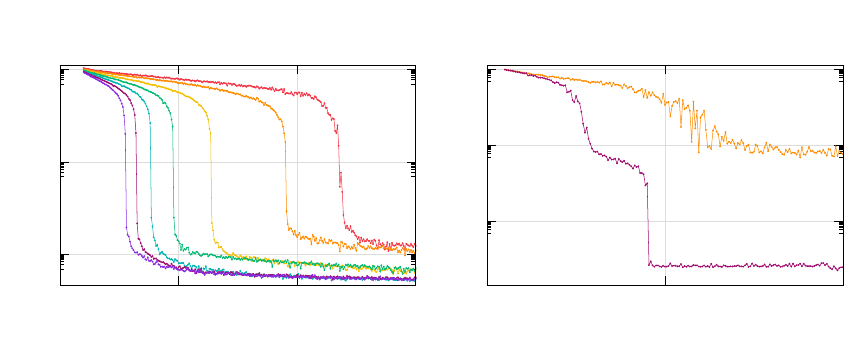}}\gplfronttext
  \end{picture}\endgroup
     \caption{
        Depiction of the $L_2$-MZ constant $\varepsilon = \|\sum_{i=1}^{N+1}\alpha_i \bm \varphi(\bm x^i)\cdot\bm \varphi(\bm x^i)^\ast - \bm I_m\|_{2\to 2}$ for:
        experiment~2 in {\color{red}$d=1$}, {\color{orange}$d=2$}, {\color{yellow}$d=3$}, {\color{green}$d=4$}, {\color{cyan}$d=5$}, {\color{indigo}$d=6$}, and {\color{violet}$d=7$} (left) and experiment~3 for $d=1$ {\color{indigo}with weights} and {\color{orange}without weights} (right).
    }\label{fig:trigonometric}
\end{figure}  

The results are depicted in the left of \Cref{fig:trigonometric}.
One can see that the threshold of finding a good $L_2$-MZ inequality is distinctly indicated by a sharp drop in the $L_2$-MZ constant $\varepsilon = \|\sum_{i=1}^{N+1}\alpha_i \bm \varphi(\bm x^i)\cdot\bm \varphi(\bm x^i)^\ast - \bm I_m\|_{2\to 2}$.
This happens around in accordance of our bound with $n=\dim\{V\cdot\overline V\} \le 381$.
Note, that for lower dimension this number may be smaller because of the fewer possibilities of differences.
In particular, we have $\dim\{V\cdot\overline V\}\approx 220$ for $d=1$, $\dim\{V\cdot\overline V\}\approx 375$ for $d=2$.
Furthermore, the results suggest a ``blessing of dimensionality'' as a good $L_2$-MZ inequality can be found with less points for higher dimensions.
In particular, the below table shows the first $n^\star(d)$ such that $\varepsilon < 10^{-10}$ with respect to the dimension $d$.
\begin{center}
    \setlength{\tabcolsep}{4pt}
    \begin{tabular}{c|*{20}{c}}
        $d$ & $1$ & $2$ & $3$ & $4$ & $5$ & $6$ & $7$ & $8$ & $9$ & $10$ & $11$ & $12$ & $13$ & $14$ & $15$ & $16$ & $17$ & $18$ & $19$ & $20$ \\
        \hline
        $n^\star(d)$ & $241$ & $192$ & $129$ & $97$ & $78$ & $66$ & $56$ & $49$ & $44$ & $39$ & $36$ & $33$ & $31$ & $29$ & $27$ & $25$ & $24$ & $23$ & $20$ & $20$ \\
        $\lfloor 400/d \rfloor$ & $400$ & $200$ & $133$ & $100$ & $80$ & $66$ & $57$ & $50$ & $44$ & $40$ & $36$ & $33$ & $30$ & $28$ & $26$ & $25$ & $23$ & $22$ & $21$ & $20$
    \end{tabular}
    \setlength{\tabcolsep}{6pt}
\end{center}
We added the values $\lfloor 400/d \rfloor$ which are close to $n^\star(d)$ suggesting an inverse linear dependence on the dimension.
A similar effect for the sphere $\mathds S^d$ is also discussed in \cite{GP11}, where it is argued that with higher dimension there are more degrees of freedom to be utilized.

\paragraph{Experiment~3} 

The third experiment aims to indicate the sharpness of our result.
For that we choose $d=1$ and $I = \{0$, $107\,062$, $124\,928$, $1\,033\,760$, $1\,414\,818$, $2\,142\,995$, $2\,820\,145$, $4\,210\,229$, $4\,645\,143$, $5\,264\,579\}$.
This is based on \Cref{cor:lattice_aliasing}, where we know that the minimal lattice size is $103$ for exact reconstruction and our theoretical bound is $\dim \{V\cdot\overline V\} = 91$.

Analogous to the second experiment, we computed the $L_2$-MZ constant varying the number of points $n\in\{20,\dots,200\}$.
For each $n$, we started with $1\,000$ different random initial draws of $n$ points and used the best result.
We repeated this experiment with and without optimizing the weights.

The results are seen in the right of \Cref{fig:trigonometric}.
The first observation is the improvement gained from utilizing non-equal weights.
It seems it makes the optimization procedure more stable as we know of the existence of an exact $L_2$-MZ inequality for $n=103$ which the equal-weighted algorithm did not find.
The weights in this case are non-equal.
The second observation is the visible drop to finding an exact $L_2$-MZ inequality at $n=91$, which is the same as our proposed bound for the needed number of points $\dim\{V\cdot\overline{V}\}$.
While this does not show the non-existence of possibly even equal-weighted exact $L_2$-MZ inequalities for $n\le 90$, it is a numerical indication of the sharpness of our result. \section{Guaranteed and verifiable recovery in $L_2(\mu)$}

Let us consider a recovery problem, which has been first addressed by Wasilkowski and Wo{\'z}niakowski \cite{WaWo01} in 2001 and drew a lot of attention in the past 6 years \cite{KrUL21,NaSchUl22,DoKrUl23,BSU23}. In this section we will comment on the problem how to practically find stable recovery algorithms and  suitable points which are guaranteed to recover any function from the unit ball in  a RKHS $H(k)$ with a prescribed accuracy in $L_2(\mu)$. In certain special case like Besov-Sobolev-spaces with dominating mixed smoothness, sparse grid and Smolyak type  algorithms serve as a practical near optimal way to recover such functions, see \cite[Chapt.\ 5]{DuTeUl18}. Other approaches include rank-$1$-lattice algorithms, see, e.g.,  \cite{Kammerer_Potts_Volkmer_2015} or \cite[Sect. 6]{KMNN21}, and versions thereof \cite{BaKaPoUl24}. The question arises whether it is possible to mimic such constructive approaches in the general setting of a reproducing kernel Hilbert space where we have access to the most important eigenfunctions of the embedding operator into $L_2(\mu)$. Our attempt will be to construct an exact $L_2$-discretization on the subspace spanned by these eigenfunctions via $D$-optimal designs. This is supposed to mimic the construction of a reproducing rank-$1$-lattice on spaces of trigonometric polynomials for given frequency sets in $\mathds Z^d$, like hyperbolic crosses, see \cite{Kae2012, Kae2013, KMNN21}. 

Given a probability measure $\mu$ on a compact topological space $\Omega$ and a bounded, continuous Mercer kernel $k(\cdot, \cdot):\Omega\times \Omega\to \mathds{C}$ we will construct a direct recovery algorithm for this specific situation using $D$-optimal designs. We start with the Mercer decomposition of the kernel $k(\cdot,\cdot)$ which is
\begin{equation}\label{SVD}
   k(x,y) = \sum\limits_{j=1}^\infty \sigma_j^2 \psi_j(x)\overline{\psi_j(y)}\,.
\end{equation}
Here the system $(\psi_j(\cdot))_j$ represents the system of orthonormal (w.r.t measure $\mu$) eigenfunctions of the corresponding integral operator with respect to the kernel $k(\cdot, \cdot)$ mapping from $L_2(\mu)$ to $L_2(\mu)$. The sequence $(\sigma_j^2)_{j \in \mathds{N}}$ represents the corresponding non-increasing sequence of non-negative eigenvalues satisfying 
$$
   \int_{\Omega}k(x,x)\,d\mu(x) = \sum_{j=1}^\infty \sigma_j^2 <\infty\,.
$$ 
Let us fix $n < \dim(H(k))$ and assume that we have access to the first $n$ (largest) singular values $\sigma_j$, the corresponding functions $\psi_j(\cdot)$ and the truncated trace
$$
    k_n(x,x) := \sum\limits_{j=1}^n \sigma_j^2 |\psi_j(x)|^2\,.
$$
We construct the sampling points and the linear sampling recovery operator $S^{k,\mu}_{n,N}: H(k) \to L_2(\mu)$ and start with the density function
\begin{equation}\label{eq333}
    \omega_n(x):= \frac{1}{2}+\frac{k(x,x)-k_n(x,x)}{2\int_\Omega (k(x,x)-k_n(x,x))\,d\mu(x)}=\frac{1}{2}+\frac{ \sum\limits_{j=n+1}^\infty \sigma_j^2 |\psi_j(x)|^2}{2\sum\limits_{j=n+1}^\infty \sigma_j^2}\,.
\end{equation}
Clearly, $\int_{\Omega} \omega_n(x) \;\mathrm d\mu(x) = 1$. We continue applying \Cref{discr_orth} to the modified system
\begin{equation*}
    \varphi_j(\cdot) := \frac{\psi_j(\cdot)}{\sqrt{\omega_n(\cdot)}}\quad,\quad j=1,...,n\,.
\end{equation*}
In particular, we apply \Cref{algo1} to the system $(\varphi_j(\cdot))_{j=1}^n$ and obtain points $(x_i)_{i=1}^N$ with $N\leq n^2+1$, and weights $(\lambda_i)_{i=1}^N$ summing up to $1$ such that the system matrix ${\bm A} = {\bm D}_{\bm \lambda}^{1/2}\cdot {\bm D}_{\bm \gamma}^{-1/2}\cdot {\bm \Psi}$ for the linear system
\begin{equation*}
    \begin{pmatrix}
        \sqrt{\frac{\lambda_1}{\omega_n(x^1)}}\psi_1(x^1) & \dots & \sqrt{\frac{\lambda_1}{\omega_n(x^1)}}\psi_n(x^1) \\
\vdots & \ddots & \vdots \\
\sqrt{\frac{\lambda_N}{\omega_n(x^N)}}\psi_1(x^N) & \dots & \sqrt{\frac{\lambda_N}{\omega_n(x^N)}}\psi_n(x^N) \\
    \end{pmatrix}
    \cdot
    \begin{pmatrix} c_1\\
    \vdots\\
    c_n
    \end{pmatrix}
    =
    \begin{pmatrix} \sqrt{\frac{\lambda_1}{\omega_n(x^1)}}f(x^1)\\
    \vdots\\
    \sqrt{\frac{\lambda_N}{\omega_n(x^N)}}f(x^N)
    \end{pmatrix}
\end{equation*}
has orthonormal columns. Introducing the matrices ${\bm \Psi}:=(\psi_k(x^j))_{j,k}$, ${\bm D}_{{\bm \lambda}} := \operatorname{diag}(\lambda_1,...,\lambda_n)$ and ${\bm D}_{\bm \omega} := \operatorname{diag}(\omega_n(x^1),...,\omega_n(x^N))$ we find
\begin{equation*}
    \bm c = ({\bm D}_{\bm \lambda}^{1/2}\cdot {\bm D}_{\bm \omega}^{-1/2}\cdot {\bm \Psi})^{\ast}\cdot {\bm D}_{\bm \lambda}^{1/2}\cdot {\bm D}_{\bm \omega}^{-1/2}\cdot {\bm f} = {\bm \Psi}^{\ast}\cdot{\bm D}_{\bm \lambda}\cdot {\bm D}^{-1}_{\bm \omega}\cdot{\bm f}\,,
\end{equation*}
where $\bm f = (f(x^1),...,f(x^N))^T = N(f)$, $\bm c = (c_1,\dots,c_n)^T$\,. The operator $S^{k,\mu}_{n,N}:H(K)\to L_2(\mu)$ is finally given by
\begin{equation*}
    S^{k,\mu}_{n,N} := E_n \circ {\bm A}^{\ast}\circ {\bm D}_{\bm \lambda}^{1/2}\circ {\bm D}_{\bm \omega}^{-1/2} \circ N
\end{equation*}
with $N~:~f \mapsto (f(x^1),...,f(x^N))^T$ and $E_n~:~\bm c \mapsto \sum_{j=1}^n c_j \psi_j$\,. This implies that the computation of $S^{k,\mu}_{n,N}f$ is possible in $O(n\cdot N)$ arithmetic operations. Note, that $N\leq n^2+1$, see \Cref{discr_orth}. 

In \cite{DoKrUl23} the authors improve on a result in \cite{NaSchUl22} and \cite{BSU23} and give an asymptotically sharp result in terms of the number of samples $n$ in cases where the singular values decay fast enough. It is shown that there is an algorithm $A_n$ which uses $N = 43200\cdot 866 \cdot n$ 
points such that for all $\|f\|_{H(k)} \leq 1$
$$
    \|f-A_n(f)\|_{L_2(\mu)}^2 \leq \frac{1}{n}\sum\limits_{j\geq n} \sigma_j^2\,.
$$
Despite its sharpness, the results in \cite{DoKrUl23,NaSchUl22} have two main drawbacks. Although the number of samples $N$ scales linearly in the dimension $n$ of the subspace, the oversampling constant may be huge. In addition, the point set is highly non-constructive and only its existence is proved. Also the non-optimal results in \cite{KrUL21,KaUlVo21,BSU23} propose points which are the  result of a random draw that can not be ``verified''.

The following theorem is a first ``verifiable'' attempt which guarantees the proposed accuracy. We refine an approach by Gr\"ochenig \cite{Gr20}, where error estimates for function classes are obtained from given Marcinkiewicz-Zygmund families. Here we use the exact ones from \Cref{Sect:discrete_orth} which lead to direct algorithms.

\begin{theorem} Let $\Omega$ be a compact topological measurable space with a Borel probability measure $\mu$ and $k(\cdot, \cdot):\Omega\times \Omega\to \mathds{C}$ a Mercer kernel. Then the sampling recovery algorithm $S^{k,\mu}_{n,N} : H(k) \to L_2(\mu)$ defined above yields the recovery bound
    \begin{equation}\label{fast}
        \sup\limits_{\|f\|_{H(k)\leq 1}}\|f-S^{k,\mu}_{n,N} f\|_{L_2(\mu)}^2 \leq 3\sum\limits_{j\geq n+1} \sigma_j^2\leq 3\,c_n(H(k),C(\Omega))^2\,,
    \end{equation}
    where $c_n(H(k),C(\Omega))$ denotes the $n$th Gelfand number of the identity operator from $H(k)$ into $C(\Omega)$.
    In addition, $S^{k,\mu}_{n,N}f$ uses  $N\leq n^2+1$ many function samples and a value $S^{k,\mu}_{n,N}f(x)$ can be directly and accurately computed with less than $c\cdot n^3$ arithmetic operations, where $c>0$ is an absolute constant.
\end{theorem}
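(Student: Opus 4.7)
My plan is to treat $S^{k,\mu}_{n,N}$ as a weighted least squares projection onto $V_n=\operatorname{span}\{\psi_1,\ldots,\psi_n\}$ and split the error into a ``projection truncation'' part and an ``aliasing'' part.

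First, I would verify that the construction is well-defined and that $S^{k,\mu}_{n,N}$ is the identity on $V_n$. By the exact $L_2$-MZ property from \Cref{exactmz} applied to the rescaled system $\varphi_j=\psi_j/\sqrt{\omega_n}$ with the measure $\omega_n\,d\mu$ (which is a probability measure because $\int\omega_n\,d\mu=1$), the weights $\lambda_i/\omega_n(x^i)$ satisfy
\begin{equation*}
\sum_{i=1}^{N}\frac{\lambda_i}{\omega_n(x^i)}\psi_k(x^i)\overline{\psi_l(x^i)}=\delta_{k,l},\qquad 1\le k,l\le n.
\end{equation*}
Equivalently, the system matrix $\bm A=\bm D_{\bm\lambda}^{1/2}\bm D_{\bm\omega}^{-1/2}\bm\Psi$ has orthonormal columns, so $\bm A^{*}\bm A=\bm I_n$. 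A direct calculation then shows that for any $g\in V_n$ the normal equations return the exact coefficients, i.e.\ $S^{k,\mu}_{n,N}g=g$.

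Next, I would decompose $f=P_nf+r$ where $P_nf\in V_n$ is the $L_2(\mu)$-orthogonal projection and $r=\sum_{j>n}\langle f,\psi_j\rangle_\mu\psi_j\perp V_n$. Because $S^{k,\mu}_{n,N}$ is linear with range in $V_n$ and acts as identity on $V_n$, one has $f-S^{k,\mu}_{n,N}f=r-S^{k,\mu}_{n,N}r$. Since $S^{k,\mu}_{n,N}r\in V_n$ is orthogonal to $r$ in $L_2(\mu)$, Pythagoras yields
\begin{equation*}
\|f-S^{k,\mu}_{n,N}f\|_{L_2(\mu)}^{2}=\|r\|_{L_2(\mu)}^{2}+\|S^{k,\mu}_{n,N}r\|_{L_2(\mu)}^{2}.
\end{equation*}
The first summand is bounded by $\sum_{j>n}\sigma_j^2$ using $\|f\|_{H(k)}^2=\sum_j|\langle f,\psi_j\rangle_\mu|^2/\sigma_j^2\le 1$, which gives $|\langle f,\psi_j\rangle_\mu|^2\le\sigma_j^2$.

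The heart of the argument is bounding the aliasing term $\|S^{k,\mu}_{n,N}r\|_{L_2(\mu)}^{2}$. Since the coefficient vector is $\tilde{\bm c}=\bm A^{*}\bigl(\bm D_{\bm\lambda}^{1/2}\bm D_{\bm\omega}^{-1/2}\bm r\bigr)$ and $\bm A^{*}$ is an isometry on its range, I get
\begin{equation*}
\|S^{k,\mu}_{n,N}r\|_{L_2(\mu)}^{2}=\|\tilde{\bm c}\|_2^2\le\sum_{i=1}^{N}\frac{\lambda_i}{\omega_n(x^i)}|r(x^i)|^{2}.
\end{equation*}
Now the design of $\omega_n$ in \eqref{eq333} pays off: a Cauchy--Schwarz estimate combined with $\|f\|_{H(k)}\le 1$ gives the crucial pointwise bound
\begin{equation*}
|r(x)|^{2}=\Big|\sum_{j>n}\tfrac{\langle f,\psi_j\rangle_\mu}{\sigma_j}\sigma_j\psi_j(x)\Big|^{2}\le\sum_{j>n}\sigma_j^{2}|\psi_j(x)|^{2}\le 2\omega_n(x)\sum_{j>n}\sigma_j^{2},
\end{equation*}
where the last inequality follows from $\omega_n(x)\ge \tfrac{1}{2}\sum_{j>n}\sigma_j^2|\psi_j(x)|^2/\sum_{j>n}\sigma_j^2$. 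Plugging this in and using $\sum_i\lambda_i=1$ gives $\|S^{k,\mu}_{n,N}r\|_{L_2(\mu)}^{2}\le 2\sum_{j>n}\sigma_j^2$, and combining with the Pythagoras identity yields the claimed factor $3$.

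The Gelfand-number upper bound $\sum_{j\ge n+1}\sigma_j^{2}\le c_n(H(k),C(\Omega))^{2}$ is a standard estimate from the sampling recovery literature (see \cite{NaSchUl22,DoKrUl23}) and I would just invoke it. Finally, the complexity claim is immediate: computing $\bm c=\bm\Psi^{*}\bm D_{\bm\lambda}\bm D_{\bm\omega}^{-1}\bm f$ costs $O(nN)$ arithmetic operations and $N\le n^2+1$ gives the $O(n^{3})$ bound; evaluating $S^{k,\mu}_{n,N}f(x)=\sum_{j=1}^{n}c_j\psi_j(x)$ then adds $O(n)$. The main delicate point is the weight construction: the whole scheme only works because $\omega_n$ is chosen so that $\omega_n(x)$ dominates exactly the tail $\sum_{j>n}\sigma_j^2|\psi_j(x)|^2$ we need to tame, while simultaneously being a probability density for \Cref{algo1} to apply to $(\varphi_j)$.
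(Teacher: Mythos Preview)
Your proof is correct and follows essentially the same approach as the paper: the same decomposition $f-S^{k,\mu}_{n,N}f=(f-P_nf)-S^{k,\mu}_{n,N}(f-P_nf)$ with Pythagoras, the same use of $\|\bm A^\ast\|_{2\to 2}=1$ to pass to $\sum_i\lambda_i|r(x^i)|^2/\omega_n(x^i)$, and the same pointwise Cauchy--Schwarz bound $|r(x)|^2\le\sum_{j>n}\sigma_j^2|\psi_j(x)|^2$ combined with the definition of $\omega_n$. The only cosmetic differences are that the paper phrases the projection as $A_nf=\sum_{j\le n}\langle f,\sigma_j\psi_j\rangle_{H(k)}\sigma_j\psi_j$ (which coincides with your $P_nf$), uses the slightly sharper $\|f-A_nf\|_{L_2(\mu)}^2\le\sigma_{n+1}^2$ for the first summand, and cites \cite{CoKuSi16,NoWo08} rather than \cite{NaSchUl22,DoKrUl23} for the Gelfand-number inequality.
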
 

\begin{proof}  Let the projection operator $A_n:H(k) \to L_2(\mu)$ be given as
    \begin{equation*}
        A_nf := \sum\limits_{j=1}^n \langle f, \sigma_j \psi_j\rangle_{H(k)}\sigma_j\psi_j(\cdot)\,.
    \end{equation*}
    By a straight-forward computation we get for all $x\in \Omega$
    \begin{equation}\label{eq334}
        |f(x)-A_nf(x)|^2 \leq \|f\|_{H(k)}^2\cdot \sum\limits_{j=n+1}^\infty \sigma_j^2 |\psi_j(x)|^2
    \end{equation}
    but also $\|f-A_nf\|^2_{L_2(\mu)} \leq \sigma_{n+1}^2\|f\|^2_{H(k)}$ which will be used in the following estimation. Fix $f \in H(k)$ with $\|f\|_{H(k)} \leq 1$. By $L_2(\mu)$-orthogonality we have
    \begin{equation}
       \begin{split}
            \|f-S^{k,\mu}_{n,N}f\|_{L_2(\mu)}^2 &= \|f-A_nf\|_{L_2(\mu)}^2 + \|A_nf - S^{k,\mu}_{n,N}f\|_{L_2(\mu)}^2\\
            &=\sigma_{n+1}^2+\|S^{k,\mu}_{n,N}(A_nf - f)\|_{L_2(\mu)}^2\\
            &=\sigma_{n+1}^2+\|E_n\circ{\bm A}^{\ast}\circ {\bm D}_{\bm \lambda}^{1/2}\circ {\bm D}_{\bm \omega}^{-1/2}\circ N(A_nf-f)\|^2_{L_2(\mu)}\\
            &=\sigma_{n+1}^2 +\Big\|{\bm A}^{\ast}\circ {\bm D}_{\bm \lambda}^{1/2}\circ {\bm D}_{\bm \omega}^{-1/2}\circ N(A_nf-f)\Big\|^2_{\ell_2^n}\\
            &\leq \sigma_{n+1}^2 +\Big\|{\bm D}_{\bm \lambda}^{1/2}\circ {\bm D}_{\bm \omega}^{-1/2}\circ N(A_nf-f)\Big\|^2_{\ell_2^N}\\
            &\leq \sigma_{n+1}^2 +\sum\limits_{i=1}^N\lambda_i\frac{|f(x^i)-A_nf(x^i)|^2}{\omega_n(x^i)}\\
\end{split}
    \end{equation}
    In the last but one estimate we used that $\|{\bm A}^{\ast}\|_{2\to 2} = \|\bm A\|_{2\to 2}  = 1$. Taking \eqref{eq333} and \eqref{eq334} into account yields
    \begin{equation*}
        \|f-S^{k,\mu}_{n,N}f\|_{L_2(\mu)}^2 \leq \sigma_{n+1}^2+2\sum\limits_{j= n+1}^{\infty} \sigma_j^2\sum\limits_{i=1}^N\lambda_i \leq 3\sum\limits_{j=n+1}^\infty \sigma_j^2\,.
    \end{equation*}
    The second inequality in \eqref{fast} follows from \cite[Lem.\ 3.3]{CoKuSi16} and the fact that Gelfand and approximation numbers coincide in our situation, see \cite[Thm.\ 4.8]{NoWo08}.   
\end{proof} 

\begin{remark}[Point cloud constructions] Recent approaches for obtaining point clouds involve an initial random draw, see \cite{KrUL21,NaSchUl22,BSU23,DoKrUl23}. Our approach characterizes the point set via a maximum search for determinants / Frobenius norms. In special cases, i.e., rank-$1$-lattices, one may prove better results \cite{ByKaUlVo17}, \cite{BaKaPoUl24}. In addition, so-called sparse grids yield another constructive approach for suitable point clouds which work particularly well in the special case of mixed smoothness RKHS, see \cite{DuTeUl18} and \cite{SiUl07}. We may further BSS-subsample \cite{BSU23} the points obtained in the optimization process to get a similar bound with fewer points like in \cite{KPUU24}. The right-hand side in \eqref{fast} keeps valid up to constants, whereas the number of point samples shrinks to $N = O(n)$. The subsampling is constructive.
\end{remark} 

\begin{remark}[Direct and stable recovery algorithms] Using points from an exact $L_2$-MZ inequality gives a direct and stable method to obtain the approximation, i.e., only a multiplication with the perfectly conditioned adjoint system matrix $\bm A$ is needed rather than a matrix inversion. With the naive matrix-vector multiplication, this yields a computational complexity of $\mathcal O(n\cdot N) = \mathcal O(n^3)$ when using $n$ basis functions. In contrast to that, current subsampling techniques yield $N = O(n)$ many points with a near-optimal  error behavior with respect to the number of points \cite{BSU23,BaKaPoUl24}.
    These points do not fulfill an exact $L_2$-MZ inequality and one usually computes the approximation by (weighted) least squares algorithms.
    With $r$ iterations, this gives a computational complexity of $\mathcal O(r\cdot n\cdot N) = \mathcal O(r\cdot n^2)$.
    For an exact solution $r=n$ iterations could be necessary which yields also a computational complexity of $\mathcal O(n^3)$.
    However, the error decay is exponential in $r$ and often only $20$ iterations are sufficient, cf.\ \cite[Thm.~3.1.1]{Gre97}, which would yield a computational complexity of $\mathcal O(n^2)$.
\end{remark} 

\noindent\textbf{Acknowledgement.} KP would like to acknowledge support by the Philipp Schwartz Fellowship of the Alexander von
Humboldt Foundation. TU and FB acknowledge support by the European Union, European Social Fund ESF-Plus, Saxony, where this work was part of the research project ReSIDA-H2. LK gratefully acknowledges funding by the Deutsche Forschungsgemeinschaft (DFG,
German Research Foundation) – project number 380648269. 
\bibliographystyle{abbrv}

\end{document}